\newcommand{\R}{\ensuremath{\mathbb{R}}} 
\renewcommand{\O}{\ensuremath{\Omega}} 
\newcommand{\Opm}{\ensuremath{\Omega^-\cup\Omega^+}} 
\newcommand{\ue}{{\bm{u}_\e}} 
\newcommand{\e}{\varepsilon} 
\newcommand{\ba}{\bm{a}}
\newcommand{\by}{\bm{y}}
\newcommand{\bv}{\bm{v}}
\newcommand{\bV}{\bm{V}}
\newcommand{\bu}{\bm{u}}
\newcommand{\bw}{\bm{w}}
\newcommand{\bU}{\bm{U}}
\newcommand{\bn}{\bm{n}}
\newcommand{\bb}{\bm{b}}
\newcommand{\bt}{\bm{t}}
\newcommand{\bI}{\bm{I}}
\newcommand{\bA}{\bm{A}}
\newcommand{\bB}{\bm{B}}
\newcommand{\bS}{\bm{S}}
\newcommand{\bL}{\bm{L}}
\newcommand{\bF}{\bm{F}}
\newcommand{\bG}{\bm{G}}
\newcommand{\bP}{\bm{P}}
\newcommand{\bW}{\bm{W}}
\newcommand{\bR}{\bm{R}}
\newcommand{\bK}{\bm{K}}
\newcommand{\bX}{\bm{X}}
\newcommand{\bM}{\bm{M}}
\newcommand{\be}{\bm{e}}
\newcommand{\bchi}{\bm{\chi}}
\newcommand{\bPhi}{\bm{\Phi}}
\newcommand{\bomega}{\bm{\omega}}
\newcommand{\Ae}{{\underline{\bm{A}}^\e}} 
\newcommand{\A}{{\underline{\bm{A}}}}
\newcommand{\cF}{\mathcal{F}}
\newcommand{\cP}{\mathcal{P}}
\newcommand{\cQ}{\mathcal{Q}}
\newcommand{\cT}{\mathcal{T}}
\newcommand{\cU}{\mathcal{U}}
\newcommand{\cV}{\mathcal{V}}
\newcommand{\cW}{\mathcal{W}}
\newcommand{\cY}{\mathcal{Y}}
\newcommand{\bdt}[1][]{{[\Delta t^{#1}]}}
\newcommand{\hk}{\ensuremath{\hat{\bK}}}
\newcommand{\barx}{{\bar{x}}} 
\newcommand{\baru}{{\bar{\bm{u}}}} 
\newcommand{\barU}{{\bar{\bm{U}}}} 
\newcommand{\ahom}{\ensuremath{a^\text{hom}}} 
\newcommand{\aahom}{\ensuremath{\underline{\bm{A}}^\text{hom}}}
\newcommand{\bhom}{\ensuremath{b^\text{hom}}} 
\newcommand{\bbhom}{\ensuremath{\underline{\bm{B}}^\text{hom}}}
\newcommand{\chom}{\ensuremath{c^\text{hom}}} 
\newcommand{\cchom}{\ensuremath{\underline{\bm{C}}^\text{hom}}}
\newcommand{\hrho}{\ensuremath{\hat{\rho}_s}}
\NewDocumentCommand{\ddx} {
	O{}
	O{}
	O{}
} {
	\ifthenelse{ \equal{#3}{} } {
		\ifthenelse{ \equal{#2}{} }
		{\ifthenelse{ \equal{#1}{} }
			{\ensuremath{\frac{\textup{d}}{\textup{d}x}}}
			{\ensuremath{\frac{\textup{d}}{\textup{d}#1}}}
		}
		{\ifthenelse{ \equal{#1}{} }
			{\ensuremath{\frac{\textup{d}^{#2}}{\textup{d}x^{#2}}}}
			{\ensuremath{\frac{\textup{d}^{#2}}{\textup{d}#1^{#2}}}}
		}
	}
	{\ifthenelse{ \equal{#2}{} }
		{\ifthenelse{ \equal{#1}{} }
			{\ensuremath{\frac{\textup{d}#3}{\textup{d}x}}}
			{\ensuremath{\frac{\textup{d}#3}{\textup{d}#1}}}
		}
		{\ifthenelse{ \equal{#1}{} }
			{\ensuremath{\frac{\textup{d}^{#2}#3}{\textup{d}x^{#2}}}}
			{\ensuremath{\frac{\textup{d}^{#2}#3}{\textup{d}#1^{#2}}}}
		}
	}	
}
\NewDocumentCommand{\pdx} {
	O{}
	O{}
	O{}
} {
	\ifthenelse{ \equal{#3}{} } {
		\ifthenelse{ \equal{#2}{} }
		{\ifthenelse{ \equal{#1}{} }
			{\ensuremath{\frac{\partial}{\partial x}}}
			{\ensuremath{\frac{\partial}{\partial#1}}}
		}
		{\ifthenelse{ \equal{#1}{} }
			{\ensuremath{\frac{\partial^{#2}}{\partial x^{#2}}}}
			{\ensuremath{\frac{\partial^{#2}}{\partial#1^{#2}}}}
		}
	}
	{\ifthenelse{ \equal{#2}{} }
		{\ifthenelse{ \equal{#1}{} }
			{\ensuremath{\frac{\partial#3}{\partial x}}}
			{\ensuremath{\frac{\partial#3}{\partial#1}}}
		}
		{\ifthenelse{ \equal{#1}{} }
			{\ensuremath{\frac{\partial^{#2}#3}{\partial x^{#2}}}}
			{\ensuremath{\frac{\partial^{#2}#3}{\partial#1^{#2}}}}
		}
	}	
}
\NewDocumentCommand{\dx} 
{
	O{}
} 
{
	\ifthenelse{ \equal{#1}{} }
	{\ensuremath{\;\textup{d}}x}
	{\ensuremath{\;\textup{d}}#1}
}
\NewDocumentCommand{\dbx} 
{
	O{}
} 
{
	\ifthenelse{ \equal{#1}{} }
	{\ensuremath{\;\textup{d}}\bar{x}}
	{\ensuremath{\;\textup{d}}#1}
}
\def\eg{e.g., }
\def\ie{i.e., }
\def\wrt{w.r.t.\ }
\crefname{hypothesis}{Hypothesis}{Hypotheses}
\title{Asymptotically proved numerical coupling of a 2D flexural porous plate with the 3D Stokes fluid\thanks{Submitted to the editors {\color{blue} DATE 28.12.2023}.
		\funding{This work was funded by the German Research Foundation under DFG-Project OR 190/6-3.}}}
\author{Maxime Krier
	\thanks{Department of Flow and Material Simulation, Fraunhofer ITWM, Kaiserslautern, Germany 
		(\email{maxime.krier@itwm.fraunhofer.de}, \email{julia.orlik@itwm.fraunhofer.de}, \email{konrad.steiner@itwm.fraunhofer.de}).}
	\and Julia Orlik
	\footnotemark[2]
	\and Grigory Panasenko
	\thanks{University of Saint-Etienne, France 
		(\email{grigory.panasenko@univ-st-etienne.fr}).}
	\and Konrad Steiner
	\footnotemark[2]}
\begin{document}

\maketitle

\begin{abstract}
This paper presents an efficient coupling of the 3D Stokes flow interacting with an effective perforated periodic heterogeneous anisotropic 2D plate. The effective model was obtained by the asymptotic analysis in earlier works and here an effective numerical algorithm is given. By $Q_3$ or bi-cubic spacial interpolation the time-dependent problem was reduced to an algebraic system of ordinary differential equation in time. Different examples were given, demonstrating the influence of the structural plate parameters on the solution. 	
{\color{blue} }
\end{abstract}

\begin{keywords}
2D-3D-PDE-coupling, fluid-solid-interaction, dimension reduction, plate homogenization, $Q_3$ or bi-cubic interpolation, numerical solution by Bogne-Fox-Schmit or Hermitian finite elements.
\end{keywords}

\begin{MSCcodes}
35B27, 35J50, 47H05, 74B05, 74K10, 74K20

\end{MSCcodes}

\section{Introduction}

This paper presents an efficient 2D-3D-coupling of the Stokes flow interacting with a "stiff" perforated periodic heterogeneous plate of the thickness  and period $\e$. Under "stiff" we mean a certain contrast in the elastic properties of the plate w.r.t. the fluid viscosity, which is $\sim \e^{-3}$.\\
%
Simultaneous homogenization and dimension reduction for perforated plates, textiles and a shell in
different loading regimes was performed in 
%
 \cite{vladimir:textileBeams, GKOS, Wackerle:VonKarman, GOW, loose, GHO}.
%
The limiting macroscopic elasticity problem describes homogeneous 2D Koiter plates, \cite[Chap.3]{Panasenko:Book}, \cite[Chap.11]{Griso:Book}, \cite{GKOS}, or shells,  \cite{GHO}, or a von-Karman plate, \cite{Wackerle:VonKarman}. The plate's stiffness is given in terms of three homogenized fourth-order stiffness tensors, whose entries are determined by auxiliary elasticity problems formulated on the smallest periodic unit of the structure. In literature, these equations are usually referred to as \textit{(elasticity) cell problems}. In \cite{Wackerle:VonKarman} it was shown that the linear and non-linear von-Karman plates and in \cite{GHO} shells  share the same auxiliary cell-problems on the periodic cubes of the structure.
\\

%
%
%
%
Our work starts with  results of recent homogenization and dimension reduction from \cite{OPS, GJR}  for a Koiter plate (corresponding to the small strains) coupled with the Stokes flow. 
 The authors of both articles impose a linearized coupling condition at the fluid-structure interface, namely the continuity of velocities as well as the continuity of normal stresses. Both fluid and structure equations are formulated on fixed, time-independent domains.

Exploiting the same tools as for periodic plates and shells, 
the limit system is an 
immersed 
2D 
plate coupled with 3D Stokes flow in two simple bulk domains. 
The arising macroscopic model parameters are the three fourth-order homogenized stiffness tensors attained from the same cell problems as in the homogenization and dimension reduction of the elastic structure in \cite{GKOS}. In this paper, the cell problems are generalized to incorporate a linearized contact condition of Robin-type between individual yarns in the structure adopted from \cite{vladimir:textileBeams}.
The derived coupling conditions for the macroscopic FSI are non-standard, the couple 2D-plate with 3D bulk fluid domains. The plate's vibration is proportional to the jump of fluid stresses across the plate. The fluid velocity components tangential to the plate are vanishing, while the fluid's velocity and the plate's velocity in normal direction coincide. \\
Unintuitively, in the macroscopic limit of both FSI systems \cite{OPS, GJR}, the homogenized structure is no longer permeable, such that in particular steady-state solutions for our simulation setup may no longer exist. For this reason, we propose and investigate an extended model with an additional interface flux term obeying Darcy's law. The resulting poroelastic model shares similarities with the Stokes-Stokes system considered in \cite{bloodFlowPorousInterface} for the steady state, as well as with the Biot-Kirchhoff-plate systems in \cite{mikelic:biotDimensionReduction1} and \cite{Muha:FSI2,Muha:FSI3}. A new macroscopic model parameter, namely the structure's second-order permeability tensor, is introduced. The entries of the permeability tensor are attained from the cell problems of Darcy's law in the fluid part of the periodic unit of the structure. The assumptions on the sieve geometry, relation between the plate thickness and size of the halls to make it permeable will be discussed in \cite{Larysa} soon. Here we would like to emphasize previous well-known works on the Neumann sieve and filtration through a porous layer such as \cite{CDGO_2008} \cite{amirat}, \cite{marciniak}, \cite{mikelic}, \cite{carr}, \cite{marusic}, \cite{gomez}.

While the coupling condition with the plate's vibration remains unchanged, the velocity coupling is generalized to the fluid velocity, corrected by the plate's normal velocity, being proportional to the jump of stresses across the plate. In the limit case of vanishing permeability, the model from \cite{OPS,GJR} is recovered.

Independent of the introduction of a porous interface condition, fluid and structure equations are solely one-way coupled in the stationary case.

The well-posedness of the general problem is ensured with Galerkin methods adopted from \cite{OPS, Muha:FSI2,Muha:FSI3}. A simplified proof can be performed utilizing semigroup theory (see  \cite{modelDescription}) under frequently met restricting assumptions on the symmetry of the microscopic structure, that allow the interpretation of the new FSI model as a generalized Cauchy problem on some Hilbert space. \\

The complete numerical workflow for the simulation in the macroscopic FSI setting is presented for a textile plate and results are illustrated for three real-life woven filter samples. 
We accounted on the fiber structure being in contact with each other in the plate domain. The dimension reduction approach \cite[Sec. 3]{griso:elementaryDisplacements}  is recalled for the woven plate that allows the restriction of general elasticity problems on the porous plate-domain to 1D equations on the graph of the yarn centerlines. The method is utilized to extend the results attained from \cite{vladimir:textileBeams} to the computation of the homogenized coupling and the bending stiffness tensor. A sensitivity analysis for the influence of changing design parameters on the individual tensor entries is performed and the plausibility of attained results are discussed also in regard to the fulfillment of derived theoretical properties.

Furthermore, a numerical method for the computation of the permeability tensor is presented that utilizes a pre-implemented microscopic finite volume solver. Additional sensitivity studies for the permeability are performed and results are validated with experimental measurements. A semi-analytical expression for the case of woven filters is proposed and verified with simulation results.

For the FSI problem, a monolithic FEM solving routine is derived and implemented. The immersed plate is treated as an interior boundary with continuous velocity elements and discontinuous pressure elements. For the fluid variables, a formulation with LBB stable FE and a stabilized formulation with equal order interpolation of velocity and pressure are proposed. The stabilization method is based on the classical consistently stabilized methods for Stokes flow in simply connected domains (see \eg \cite{Bochev:TaxonomyStokesStabilizations}) extended by a stabilization term on the 2D plate interface taken from \cite{bloodFlowPorousInterface}.

For the structure equations, a formulation with conforming and a second formulation with non-conforming elements is presented and compared for a decoupled Kirchhoff plate. The conforming formulation requires $H^2$-conforming elements for which the classical Bogner-Fox-Schmit \cite{BFS} elements are chosen. A similar ansatz was recently formulated in \cite{riccardo:latticeStructures} for the interpolation and extension of the displacement of 1D lattice structures to 2D domains, if the information on the mixed derivatives is missing. The non-conforming elements require a penalized formulation based on the continuous-discontinuous Galerkin approach described in \cite{engel:CDGApproach}. Theoretical error convergence rates are verified.

%
%
%

%

The paper is organized as follows. In
\cref{sec:microModel}, the multi-scale problem formulation is given for the interaction of $\e$-periodic and -thick stiff porous plate with the Stokes flow in a 3D-channel. While, \cref{sec:macroModel} deals with its limit as $\e\to 0$ and added by the infiltration condition similar to Darcy-law. Computation of the effective plate stiffnesses is given in \cref{sec:numericalHomogenization}, while of te effective plate permeability is presented in \cref{sec:permeability}. 
The new efficient numerical algorithm for the interaction of the anisotropic 2D-plate with 3D fluid in a channel  is presented in \cref{sec:FSISolver}, 
results of numerical experiments and parameter variation are in \cref{sec:simulationResults}, and the conclusions follow in
\cref{sec:conclusions}.

\section{Multiscale problem}
\label{sec:microModel}

The non-stationary Stokes flow through a channel is considered.  In the model, the channel is separated in half along the $x_3$-direction by a thin, flexural, textile-like filter, which is fixated at its outer edges. The filter itself is of deterministic nature and posses a small period $\e$ in in-plane direction $\barx=(x_1,x_2)$, while its thickness in $x_3$-direction is of the same order as $\varepsilon$. \\
%
%
%
As an intuitive assumption for modeling of woven filters, the microscopic filter domain is assumed to be thin and periodic with an in-plane period denoted by $\e$ and a comparable thickness. It can therefore be most efficiently described by the periodic repetition of a reference cell $Y_\e^s=\e Y^s\subset \R^3$ in in-plane direction. The set $Y^s_\e$ is contained within a reference cell $\e Y$, where $Y=(0,1)^2\times(-\frac{1}{2},\frac{1}{2})$ is referred to as unit cell. The spatial variable in the reference and unit cell is denoted by $y$, respectively.

It is assumed that $Y^s_\varepsilon$ is the disjoint union of finitely many Lipschitz domains, such that the interior of the closure of $Y^s_\e$ is a connected set. Here, each Lipschitz domain can be imagined as an individual yarn. The union of shared boundaries of the Lipschitz domains is denoted by $S^c_{Y,\e}=\e S^c_Y$. It represents the contact surfaces between individual yarns. Furthermore, the complement $Y_\e^f=\e Y^f$ with $Y^f=Y_\e \setminus \overline{Y^s_\e}$ is assumed to be a connected Lipschitz domain. It is occupied with viscous fluid in the microscopic FSI model. An illustration of the introduced domains is provided in \Cref{fig:exampleReferenceCell}.
\begin{figure}[htbp]
	\centering
	\includegraphics[width=0.8\textwidth]{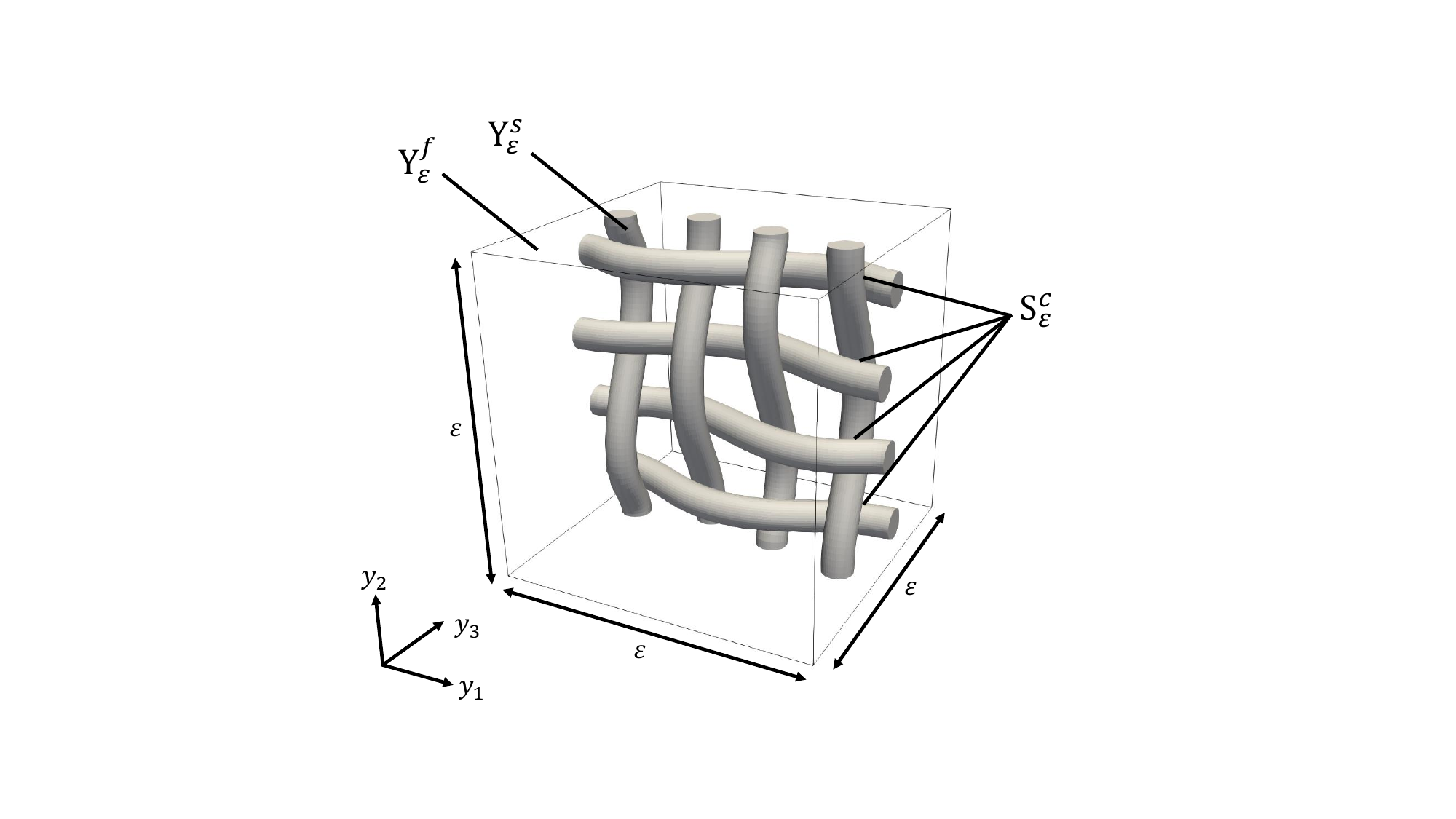}
	\caption{Example of a reference cell for a twill woven filter. {\color{blue} The notation of $S_\e^c$ has to be updated}}
	\label{fig:exampleReferenceCell}
\end{figure}

By finite periodic repetition of $Y^s_\varepsilon$, one attains a microscopic structure domain, denoted by $\O_\e^{M,s}$, which is contained within a membrane domain 
\begin{displaymath}
	\O^M_\e = (0,L_1)\times (0,L_2) \times (-\frac{\e}{2}, \frac{\e}{2}).
\end{displaymath}
The spatial variable in $\O_\e^{M}$ is denoted by $x$.

The contact surfaces, attained by periodic repetition of $S^c_{Y,\e}$, are denoted by $S_\e^c$. Furthermore, it is assumed that the filter is fixated at its outer edges, given by the set
\begin{displaymath}
	\partial^\text{fix}\O_\e^{M,s} = \partial \O^M_\e \cap \partial \O_\e^{M,s},
\end{displaymath}
assumed to be of non-zero measure and disjoint from the planes $\{x_3 = \pm\frac{\e}{2}\}$. The remaining boundary of $\O_\e^{M,s}$ is given as
\begin{displaymath}
	\partial^\text{fs}\O_\e^{M,s} = \partial \O_\e^{M,s} \setminus (\partial^\text{fix}\O_\e^{M,s} \cup S_\e^c).
\end{displaymath}

The microscopic displacement $\ue\colon (0,T)\times \O_\e^{M,s}\to\R^3$ of the filter structure is governed by linear elasticity with Robin-type contact conditions, see \cite{vladimir:textileBeams, modelDescription}. The governing system reads
\begin{equation}\label{eq:state_micro1}
	\begin{aligned}
		\rho_s\partial_{tt}\ue - \nabla\cdot\left(\Ae D(\ue) \right) 
		& = \bm{g}_\e 
		&&\quad\text{in }(0,T)\times\O_\e^{M,s}, \\
		\ue 
		& = \bm{0}
		&&\quad\text{on }(0,T)\times\partial^\text{fix}{\O_\e^{M,s}}, \\
		\left(\Ae D(\ue) \right)\bm{\eta}
		& = \bm{0}
		&&\quad\text{on }(0,T)\times\partial^\text{fs}{\O_\e^{M,s}}, \\
		\llbracket \Ae D(\ue) \rrbracket \bm{\eta}
		& = \bm{0}
		&& \quad\text{on }(0,T)\times S_\e^c, \\
		\left(\Ae D(\ue) \right)\bm{\eta} 
		& = \frac{1}{\e} \bR^\e \llbracket \ue \rrbracket 
		&& \quad\text{on }(0,T)\times S_\e^c
	\end{aligned}
\end{equation}
with solid density $\rho_s$ and initial conditions $\bu_\e(0)=\bu_0, \partial_t \bu_\e(0)=\bw_0$.

Here, $D(\bu)=\frac{1}{2}(\nabla\bu + (\nabla\bu)^T)$ denotes the symmetric gradient, $\Ae = \A(x/\e)$ with $\A\in L^\infty_\#(Y^s)^{3\times 3\times 3 \times 3}$ denotes the fourth-order material stiffness tensor and $\bR^\e= \bR(x/\e)$ with $\bR\in L^\infty_\#(S^c_\e)^{3\times 3}$ is a Robin matrix modeling contacts between individual yarns, $	\bR = \delta^{-1} \bm{\eta} \otimes \bm{\eta} 
+ \gamma_{friction} (\bI - \bm{\eta} \otimes \bm{\eta})$, where $\delta^{-1}$ and $\gamma_{friction}$ are the normal and tangential penalizing parameters. 
The term 
\begin{displaymath}
	\llbracket \bu_\e\rrbracket(x) = \lim_{\lambda\downarrow 0} \left(\bu_\e(x + \lambda\bm{\eta}) - \bu_\e(x - \lambda\bm{\eta})\right), \quad x\in S_\e^c
\end{displaymath}
for an arbitrary but fixed normal vector $\bm{\eta}$ on the interior boundary $S_\e^c$ is the jump of displacements. Hence, in the case of \textit{glued yarns}, that is $\bR^\e\to\infty$, problem \cref{eq:state_micro1} coincides with a classical elasticity problem on a single connected domain.

In the system, the product between a fourth-order tensor $\A\in\mathbb{R}^{n\times n\times n\times n}$ and a square matrix $\bB\in\mathbb{R}^{n\times n}$ is written as
\begin{displaymath}
	\A \bB = \left(\sum_{k,l=1}^n a_{ijkl} b_{kl}\right)_{i,j=1}^n \in \mathbb{R}^{n\times n}.
\end{displaymath}

The following assumptions are standard for the modeling with linear elasticity. Here and in what follows, $\bA:\bB$ denotes the standard Frobenius inner product between two square matrices.
\begin{assumption}\label{assumption:modelParameters}
	The tensor $\A=(a_{ijkl})_{i,j,k,l=1}^3$ is symmetric, \ie $a_{ijkl}=a_{jikl}=a_{klij}$ almost everywhere in $Y^s$, and coercive on the space of symmetric matrices, \ie there exists a constant $\underline{c}>0$ such that for all symmetric matrices $\bP\in\mathbb{R}^{3\times 3}$ one has $(\A\bP) : \bP\geq \underline{c} (\bP:\bP)$.
	
	Furthermore, the Robin condition matrix $\bR$ is symmetric and positive definite almost everywhere.
\end{assumption}

\Cref{assumption:modelParameters} is sufficient to ensure the existence and uniqueness of a weak solution to \cref{eq:state_micro1} under appropriate regularity of the initial conditions and of the right-hand side function $\bm{g}_\e$, see \cite{modelDescription} for details.

%
%

\section{Macroscopic model description}
\label{sec:macroModel}

In \cite{modelDescription}, a phenomenological macroscopic model for the FSI problem with non-stationary Stokes flow through a channel is proposed. In the microscopic setting, the channel is separated in half along the $x_3$-direction by a filter structure as described in the previous section, where the main direction of flow coincides with the $x_3$-direction. Linearized coupling conditions between the flow and the microscopic structure equations are prescribed, namely the continuity of velocities and of normal stresses on the fluid-structure interface $\partial^\text{fs}\O_\e^{M,s}$. Both microscopic fluid and structure equations are formulated on fixed, time-independent domains, such that the case of small filter displacements is covered. 

The model is an extension of rigorously derived macroscopic models from \cite{OPS, GJR} by the simultaneous homogenization and dimension reduction of the membrane domain $\O_\e^{M,s}$ in the scale-limit $\e\to 0$. The extension comprises of the inclusion of the linearized contact conditions between yarns from the previous section and an additional flow resistance term in the macroscopic FSI setting. \\

In the proposed macroscopic model, non-stationary Stokes flow is prescribed in two disjoint fluid domains
\begin{displaymath}
	\begin{aligned}
		\O^- &= (0,L_1) \times (0,L_2) \times (-L_3,0), \\
		\O^+ &= (0,L_1) \times (0,L_2) \times (0,L_3),
	\end{aligned}
\end{displaymath}

that is
\begin{equation}\label{eq:stokes_macro}
	\begin{aligned}
		\rho_f \partial_t \bv^\pm - 2\mu \nabla\cdot D(\bv^\pm) + \nabla p^\pm 
		&= \bm{f}^\pm
		&&\quad \text{in }(0,T)\times \O^\pm, \\
		\nabla\cdot \bv^\pm
		&= 0
		&&\quad \text{in }(0,T)\times \O^\pm
	\end{aligned}
\end{equation}
with fluid density $\rho_f$, dynamic viscosity $\mu$ and some volume force density $\bm{f}^\pm$. Moreover, the entire model domain is denoted by
\begin{displaymath}
	\O = (0,L_1) \times (0,L_2) \times (-L_3,L_3).
\end{displaymath}

The Stokes equations \cref{eq:stokes_macro} are accompanied by Dirichlet and zero-stress boundary conditions on the bottom, top and lateral boundaries
\begin{displaymath}
	\begin{aligned}
		\partial^\text{in}\O &= (0,L_1)\times (0,L_2) \times \{-L_3\}, \\
		\partial^\text{out}\O &= (0,L_1)\times (0,L_2) \times \{L_3\}, \\
		\partial^\text{no-slip}\O &= \partial\O \setminus (\partial^\text{in}\O \cup \partial^\text{out}\O).
	\end{aligned}
\end{displaymath}
The boundary conditions of choice read
\begin{equation}\label{eq:stokes_macro_boundary}
	\begin{aligned}
		\bv^-
		&= \bm{0}
		&&\quad \text{on }(0,T)\times \partial^\text{in}\O, \\
		(2\mu D(\bv^+) - p\bI)\be_3 
		&= \bm{0}
		&&\quad \text{on }(0,T)\times \partial^\text{out}\O, \\
		\bv^\pm
		&= \bm{0}
		&&\quad \text{on }(0,T)\times \partial^\text{no-slip}\O,
	\end{aligned}
\end{equation}
where $\bI$ is the $3\times 3$ unit matrix and $\be_i$ denotes the $i$-th unit vector. The inflow condition is chosen as zero for simplicity, otherwise additional regularity and extension properties of the inflow condition are required that enable the lifting of the respective solution space. \\

The two fluid domains are separated by the interior boundary
\begin{displaymath}
	\Sigma = (0,L_1) \times (0,L_2) \times \{0\},
\end{displaymath}
on which the fluid velocity is assumed to be continuous, that is $\bv^-\vert_\Sigma = \bv^+\vert_\Sigma$. The in-plane variable on $\Sigma$ is denoted by $\barx=(x_1,x_2)$.

The interface represents the mean-plane of the filter structure, whose in-plane displacement $\baru$ and outer-plane deflection $u_3$ are governed by the Kirchhoff plate equations
\begin{equation}\label{eq:plate_macro}
	\begin{aligned}
		- \nabla_\barx \cdot (\aahom D_\barx(\baru)
		+ \bbhom \nabla^2_\barx 	u_3)
		& = \bm{0} 
		&& \text{on }(0,T)\times\Sigma, \\
		\hrho \partial_{tt}u_3 + \nabla^2_\barx : (\bbhom D_\barx (\baru)
		+ 	\cchom \nabla^2_\barx u_3)
		& =
		\llbracket 2\mu D(\bv) - p\bI\rrbracket \be_3 \cdot \be_3 + g_3
		&& \text{on }(0,T)\times\Sigma
	\end{aligned}
\end{equation}
with clamped boundary conditions
\begin{equation}
	\begin{aligned}
		\baru &= \bm{0} && \text{on }(0,T)\times\partial\Sigma, \\
		u_3 = \nabla_\barx u_3\cdot\bm{\eta} &= 0 && \text{on }(0,T)\times\partial\Sigma.
	\end{aligned}
\end{equation}

Here, the expression
\begin{displaymath}
	\llbracket 2\mu D(\bv) - p\bI\rrbracket = (2\mu D(\bv^+) - p^+\bI)\vert_\Sigma - (2\mu D(\bv^-) - p^-\bI)\vert_\Sigma
\end{displaymath}
denotes the jump of fluid stresses. The operators $\nabla_\barx, D_\barx,\nabla^2_\barx$ are the respective differential operators with respect to the in-plane variables and $g_3$ is some surface force density. \\

Furthermore, the entries of the homogenized fourth-order stiffness tensors $\aahom,\bbhom,\cchom\in\mathbb{R}^{2\times2\times2\times2}$ are attained by averaging of elasticity cell solutions $\bchi_{ij}^M$ and $\bchi_{ij}^B, i,j=1,2$, reading
\begin{equation}\label{eq:homogenizedTensors}
	\begin{aligned}
		\ahom_{ijkl} 
		&=
		\frac{1}{\vert Y^s\vert}
		\left[
		\left(\A \left(D(\bchi_{ij}^M) + \bM^{ij}\right), D(\bchi_{kl}^M) + \bM^{kl} \right)_{Y^s}
		+ \left(\bR \llbracket\bchi_{ij}^M\rrbracket, \llbracket\bchi_{kl}^M\rrbracket \right)_{S^c}
		\right], \\
		\bhom_{ijkl} 
		&=  
		\frac{1}{\vert Y^s\vert}
		\left[
		\left(\A \left(D(\bchi_{ij}^B) - y_3\bM^{ij}\right), D(\bchi_{kl}^M) + \bM^{kl} \right)_{Y^s}
		+ \left(\bR \llbracket\bchi_{ij}^B\rrbracket, \llbracket\bchi_{kl}^M\rrbracket \right)_{S^c}
		\right], \\
		\chom_{ijkl} 
		&=  
		\frac{1}{\vert Y^s\vert}
		\left[
		\left(\A \left(D(\bchi_{ij}^B) - y_3\bM^{ij}\right), D(\bchi_{kl}^B) - y_3\bM^{kl} \right)_{Y^s}
		+ \left(\bR \llbracket\bchi_{ij}^B\rrbracket, \llbracket\bchi_{kl}^B\rrbracket \right)_{S^c}
		\right]
	\end{aligned}
\end{equation}
for $i,j,k,l\in\{1,2\}$ and $\bM^{ij}=\frac{1}{2}(\be_i\otimes\be_j + \be_j\otimes \be_i)\in\mathbb{R}^{3\times 3}$.

The cell solutions solve so called membrane and bending cell problems. In variational formulation, these are to find $\bchi_{ij}^{M,B}\in H^1_{\#,0}(Y^s)^3$ such that
\begin{equation}\label{eq:cellProblemStructure}
	\begin{aligned}
		& \left(\A \left(D(\bchi_{ij}^M) + \bM^{ij}\right), D(\bX) \right)_{Y^s}
		+ \left(\bR \llbracket\bchi_{ij}^M\rrbracket, \llbracket\bX\rrbracket \right)_{S^c}
		= 0, \\
		& \left(\A \left(D(\bchi_{ij}^B) - y_3\bM^{ij}\right), D(\bX) \right)_{Y^s}
		+ \left(\bR \llbracket\bchi_{ij}^B\rrbracket, \llbracket\bX\rrbracket \right)_{S^c}
		= 0
	\end{aligned}
\end{equation}
for all $\bX\in H^1_{\#,0}(Y^s)^3$. Here, $H^1_{\#,0}(Y^s)$ is the broken Sobolev space of $Y$-periodic functions, that is functions whose restrictions to the individual Lipschitz domains, that $Y^s$ is comprised of, are element of the usual Sobolev space, and which are additionally 1-periodic in in-plane direction, with vanishing mean-value in $Y^s$. Since $\bM^{ij}=\bM^{ji}$, one can verify that there are a total of six independent cell problems. 

The tensors $\aahom,\bbhom,\cchom$ are commonly referred to as extensional, coupling and bending stiffness tensor. Formally speaking, the entries of $\aahom$ determine the resistance of the structure to in-plane loads, such as applied tension and shearing, while the entries of $\cchom$ describe the resistance to bending and torsional loads. Additional coupling between in-plane strain and outer-plane bending is introduced by the entries of $\bbhom$. \\

Lastly, the macroscopic model parameter
\begin{displaymath}
	\hrho = \frac{\delta}{\vert Y_\e\vert}\int_{Y^s_\e}\rho_s\dx[y]
\end{displaymath}
is the averaged solid density $\rho_s$, with $\delta$ denoting the characteristic thickness of the structure. \\

As an additional coupling condition between fluid equations \cref{eq:stokes_macro} and structure equations \cref{eq:plate_macro}, flow-resistivity is modeled by a Darcy-interface term
\begin{equation}
	\mu \delta \bK^{-1} (\bv^+ - \partial_t u_3 \be_3) 
	= \llbracket 2\mu D(\bv) - p\bI\rrbracket \be_3
	\quad \text{on }(0,T)\times\Sigma
\end{equation}
with resistivity tensor $\bK^{-1}\in\mathbb{R}^{3\times 3}$, chosen as the inverse of the permeability tensor $\bK$. The entries of $\bK$ are given by
\begin{equation}\label{eq:permeabilityTensor}
	k_{ij} = \frac{1}{\vert Y^f\vert}(\nabla \bomega_i, \nabla \bomega_j)_{Y^f},
\end{equation}
where $\bomega_i,i\in\{1,2,3\}$ are solution to the Darcy fluid cell problems. In variational formulation, these are to find 
\begin{displaymath}
	\bomega_i\in H^1_\text{per,div}(Y^f)=\{\bW\in H^1(Y^f)^3: \bW\text{ is periodic}, \nabla\cdot\bW=0, \bW=\bm{0}\text{ on }\partial Y^s\},
\end{displaymath}
such that
\begin{equation}\label{eq:cellProblemsFluid}
	(\nabla\bomega_i, \nabla \bW)_{Y^f} = (\be_i, \bW)_{Y^f}
\end{equation}
for all $\bW\in H^1_\text{per,div}(Y^f)$. For the extreme case $\bK\to\bm{0}$, the interface $\Sigma$ becomes impermeable and the normal fluid velocity component coincides with the normal velocity of the plate. The tangential fluid velocity components vanish. One attains the FSI model derived in \cite{GJR}. For the other case $\bK\to{\infty}$, the interface is no longer seen by the fluid and the jump of fluid stresses vanishes. One attains regular Stokes flow in the entire domain $\O$.

For easier notation, the variable $\hk=\mu^{-1}\delta^{-1}\bK$ is introduced. \\

Summarizing, the macroscopic FSI problem reads
\begin{equation}\label{eq:system_macro}
	\begin{aligned}
		\rho_f \partial_t \bv^\pm - 2\mu \nabla\cdot D(\bv^\pm) + \nabla p^\pm 
		&= \bm{f}^\pm
		&& \text{in }(0,T)\times \O^\pm, \\
		\nabla\cdot \bv^\pm
		&= 0
		&& \text{in }(0,T)\times \O^\pm, \\
		\bv^-
		&= \bm{0}
		&& \text{on }(0,T)\times \partial^\text{in}\O, \\
		(2\mu D(\bv^+) - p\bI)\be_3 
		&= \bm{0}
		&& \text{on }(0,T)\times \partial^\text{out}\O, \\
		\bv^\pm
		&= \bm{0}
		&& \text{on }(0,T)\times \partial^\text{no-slip}\O, \\
		\bv^-
		&= \bv^+
		&& \text{on }(0,T)\times \Sigma, \\
		- \nabla_\barx \cdot (\aahom D_\barx(\baru)
		+ \bbhom \nabla^2_\barx 	u_3)
		& = \bm{0} 
		&& \text{on }(0,T)\times\Sigma, \\
		\hrho \partial_{tt}u_3 + \nabla^2_\barx : (\bbhom D_\barx (\baru)
		+ 	\cchom \nabla^2_\barx u_3)
		& =
		\llbracket 2\mu D(\bv) - p\bI\rrbracket \be_3 \cdot \be_3 + g_3
		&& \text{on }(0,T)\times\Sigma, \\
		\hk^{-1} (\bv^+ - \partial_t u_3 \be_3) 
		&= \llbracket 2\mu D(\bv) - p\bI\rrbracket \be_3
		&& \text{on }(0,T)\times\Sigma, \\
		\baru 
		&= \bm{0} 
		&& \text{on }(0,T)\times\partial\Sigma, \\
		u_3 = \nabla_\barx u_3\cdot\bm{\eta} 
		&= 0 
		&& \text{on }(0,T)\times\partial\Sigma,
	\end{aligned}
\end{equation}
accompanied with the initial conditions $\bv^\pm(0)=\bm{0},u_3(0)=\partial_t u_3(0)=0$. The steady-state formulation of System \cref{eq:system_macro} consists of the Stokes-Stokes coupling
\begin{equation}\label{eq:system_macro_stationaryFluid}
	\begin{aligned}
		- 2\mu \nabla\cdot D(\bv^\pm) + \nabla p^\pm 
		&= \bm{f}^\pm
		&& \text{in }\O^\pm, \\
		\nabla\cdot \bv^\pm
		&= 0
		&& \text{in }\O^\pm, \\
		\bv^-
		&= \bv^\text{in}
		&& \text{on }\partial^\text{in}\O, \\
		(2\mu D(\bv^+) - p\bI)\be_3 
		&= \bm{0}
		&& \text{on }\partial^\text{out}\O, \\
		\bv^\pm
		&= \bm{0}
		&& \text{on }\partial^\text{no-slip}\O, \\
		\bv^-
		&= \bv^+
		&& \text{on }\Sigma, \\
		\hk^{-1} \bv^+ 
		&= \llbracket 2\mu D(\bv) - p\bI\rrbracket \be_3
		&& \text{on }\Sigma,
	\end{aligned}
\end{equation}
one-way coupled to the Kirchhoff plate
\begin{equation}\label{eq:system_macro_stationaryPlate}
	\begin{aligned}
		- \nabla_\barx \cdot (\aahom D_\barx(\baru)
		+ \bbhom \nabla^2_\barx 	u_3)
		& = \bm{0} 
		&& \text{on }\Sigma, \\
		\nabla^2_\barx : (\bbhom D_\barx (\baru)
		+ 	\cchom \nabla^2_\barx u_3)
		& =
		\llbracket 2\mu D(\bv) - p\bI\rrbracket \be_3 \cdot \be_3 + g_3
		&& \text{on }\Sigma, \\
		\baru 
		&= \bm{0} 
		&& \text{on }\partial\Sigma, \\
		u_3 = \nabla_\barx u_3\cdot\bm{\eta} 
		&= 0 
		&& \text{on }\partial\Sigma.
	\end{aligned}
\end{equation}
The Stokes-Stokes coupling \cref{eq:system_macro_stationaryFluid} is actually reminiscent of the system presented in \cite{bloodFlowPorousInterface} for the modeling of immersed, rigid stents in blood flow. The cited model is based on classical Stokes-Sieve problems analyzed in \cite{conca:flowSieves1}. In the actual analysis paper \cite{Larysa}, the main parameter and bounds on the relation between the obstacle's thickness and curvature to the distance between them will be found, to make the sieve permeable or non-permeable.

For completeness, some fundamental results are recalled from literature, that suffice for the well-posedness of the system, see \cite{modelDescription} and references therein. The discussion starts with the homogenized structure.
\begin{lemma}\label{proposition:cellProblemsStructure}
	For each $i,j\in\{1,2\}$, there exists a unique cell solution $\bchi_{ij}^{M,B}\in H^1_{\#,0}(Y^s)^3$ to the cell problems \cref{eq:cellProblemStructure}, respectively.
\end{lemma}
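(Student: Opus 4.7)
The plan is to cast each cell problem as a variational equation in the Hilbert space $V=H^1_{\#,0}(Y^s)^3$ and invoke the Lax--Milgram theorem. To this end I would introduce the symmetric bilinear form
\begin{displaymath}
	a(\bu,\bv) = \left(\A D(\bu), D(\bv)\right)_{Y^s} + \left(\bR \llbracket \bu\rrbracket, \llbracket \bv\rrbracket \right)_{S^c},
	\qquad \bu,\bv\in V,
\end{displaymath}
together with the linear functionals $\ell^M_{ij}(\bX) = -(\A\bM^{ij}, D(\bX))_{Y^s}$ and $\ell^B_{ij}(\bX) = (\A y_3 \bM^{ij}, D(\bX))_{Y^s}$ associated with the membrane and bending cell problems, respectively. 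The variational formulation \cref{eq:cellProblemStructure} then reads $a(\bchi_{ij}^{M},\bX) = \ell^M_{ij}(\bX)$ and $a(\bchi_{ij}^{B},\bX) = \ell^B_{ij}(\bX)$ for all $\bX\in V$.

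The continuity of $a$ and of the right-hand sides is routine. For $a$, boundedness of the bulk term follows from $\A\in L^\infty$ and Cauchy--Schwarz, while the contact term is controlled by $\bR\in L^\infty(S^c)$ combined with the continuity of the trace operator from each Lipschitz component of $Y^s$ into $L^2(S^c)$. The linear functionals are bounded because $\bM^{ij}$ is constant, $y_3$ is essentially bounded on $Y^s$, and $\|D(\bX)\|_{L^2(Y^s)}\leq \|\bX\|_V$.

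The main obstacle is coercivity, which requires a Korn-type inequality adapted to the broken Sobolev setting with Robin-type coupling across $S^c$. Using the coercivity of $\A$ from \Cref{assumption:modelParameters} and the positive definiteness of $\bR$, one obtains
\begin{displaymath}
	a(\bu,\bu) \geq \underline{c}\,\|D(\bu)\|_{L^2(Y^s)}^2 + c_R\,\|\llbracket \bu\rrbracket\|_{L^2(S^c)}^2.
\end{displaymath}
The goal is to bound this from below by $\alpha\|\bu\|_{V}^2$. On each connected Lipschitz component $Y^s_k$ of $Y^s$, the standard Korn second inequality controls $\|\bu\|_{H^1(Y^s_k)}$ by $\|D(\bu)\|_{L^2(Y^s_k)}$ modulo rigid displacements. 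Periodicity in the in-plane variables together with the vanishing-jump penalization on $S^c$ eliminates the relative rigid motions between neighbouring components, while the zero mean-value condition in $H^1_{\#,0}(Y^s)$ together with in-plane periodicity removes the remaining global rigid motions in a Poincaré--Wirtinger fashion. A generalised Korn inequality of exactly this type is established for textile-like geometries in \cite{vladimir:textileBeams}, on which I would rely. Combining these ingredients yields $a(\bu,\bu) \geq \alpha \|\bu\|_V^2$ for some $\alpha>0$.

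With continuity and coercivity in hand, Lax--Milgram delivers a unique $\bchi_{ij}^{M}\in V$ solving the first equation of \cref{eq:cellProblemStructure}, and similarly a unique $\bchi_{ij}^{B}\in V$ solving the second one, for every $(i,j)\in\{1,2\}^2$. The symmetry $\bM^{ij}=\bM^{ji}$ implies that the six cell problems corresponding to $(i,j)\in\{(1,1),(2,2),(1,2)\}$ for each of the two families are the independent ones, which is consistent with the counting in the statement.
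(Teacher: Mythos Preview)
The paper does not actually prove this lemma; it is stated as a result ``recalled from literature'' with a reference to \cite{modelDescription}. Your Lax--Milgram argument is the standard route and is correct in outline: continuity is routine, and coercivity hinges on a broken Korn inequality in which the jump control on $S^c$ together with in-plane periodicity rules out piecewise rigid motions, while the zero-mean constraint eliminates the remaining global translation. Since this is precisely the machinery developed in \cite{vladimir:textileBeams} and \cite{modelDescription}, your approach coincides with what the cited references do, and there is nothing to compare against in the present paper itself.
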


As a consequence, one can verify the following lemma, see also Theorem 2 in \cite{GKOS}.
\begin{lemma}\label{proposition:stiffnessTensorsCoercive}
	The homogenized stiffness tensors given by the expressions \cref{eq:homogenizedTensors} are well-defined. The induced bilinear form
	\begin{displaymath}
		\begin{aligned}
			\ahom((\baru,u_3),(\barU,U_3)) &= 
			(\aahom D_\barx(\baru), D_\barx(\barU))_\Sigma 
			+ (\bbhom D_\barx(\baru), \nabla^2_\barx U_3)_\Sigma \\
			&\quad + (\bbhom \nabla^2_\barx u_3,D_\barx(\barU))_\Sigma 
			+ (\cchom\nabla^2_\barx u_3, \nabla^2_\barx U_3)_\Sigma
		\end{aligned}
	\end{displaymath}
	is continuous and bounded on $H^1_0(\Sigma)^2\times H^2_0(\Sigma)$. The induced norm \begin{displaymath}
		\Vert (\baru, u_3)\Vert_\text{hom}^2 = \ahom((\baru,u_3),(\baru,u_3))
	\end{displaymath}
	is equivalent to the standard norm on $H^1_0(\Sigma)^2\times H^2_0(\Sigma)$. The tensors $\aahom$ and $\cchom$ share the same symmetry properties as $\A$ and are coercive on the space of symmetric matrices.
\end{lemma}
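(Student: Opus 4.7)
The plan is to reduce every claim to the variational structure of the cell problems \cref{eq:cellProblemStructure}, the coercivity of $\A$ and $\bR$ guaranteed by \Cref{assumption:modelParameters}, and standard inequalities on $\Sigma$. \emph{Well-definedness} of the tensor entries \cref{eq:homogenizedTensors} follows immediately from \Cref{proposition:cellProblemsStructure}, Cauchy--Schwarz, $\A\in L^\infty_\#(Y^s)^{3\times 3\times 3\times 3}$, $\bR\in L^\infty_\#(S^c)^{3\times 3}$ and the continuous trace $H^1(Y^s)\hookrightarrow L^2(S^c)$; the same bounds give $|a^{hom}_{ijkl}|,|b^{hom}_{ijkl}|,|c^{hom}_{ijkl}|\leq C$, so \emph{continuity} of $\ahom$ on $H^1_0(\Sigma)^2\times H^2_0(\Sigma)$ is a second Cauchy--Schwarz. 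The minor \emph{symmetries} of $\aahom$ and $\cchom$ follow from $\bM^{ij}=\bM^{ji}$, which forces $\bchi^{M,B}_{ij}=\bchi^{M,B}_{ji}$ by uniqueness; the major symmetries drop out by swapping $(i,j)\leftrightarrow(k,l)$ in \cref{eq:homogenizedTensors} and invoking $(\A\bm{A},\bm{B})=(\A\bm{B},\bm{A})$ together with $(\bR\bm{a},\bm{b})=(\bR\bm{b},\bm{a})$, which are direct consequences of $a_{mnpq}=a_{pqmn}$ and $\bR=\bR^T$.

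The heart of the proof is an energy identity. Testing \cref{eq:cellProblemStructure} against $\bchi^M_\bP:=\sum p_{ij}\bchi^M_{ij}$ itself and expanding gives, for every symmetric $\bP\in\mathbb{R}^{2\times 2}$,
\begin{displaymath}
|Y^s|\,(\aahom\bP):\bP = \left(\A(D(\bchi^M_\bP)+\bM^\bP),D(\bchi^M_\bP)+\bM^\bP\right)_{Y^s} + \left(\bR\llbracket\bchi^M_\bP\rrbracket,\llbracket\bchi^M_\bP\rrbracket\right)_{S^c},
\end{displaymath}
with $\bM^\bP$ the $3\times 3$ extension of $\bP$ by zeros. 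Coercivity of $\A$ and $\bR$ yields non-negativity, with equality forcing $D(\bchi^M_\bP)=-\bM^\bP$ a.e. The resulting constant-strain displacement has the rigid-motion form $\bm{E}\,y+\bomega\wedge y+\bm{c}$ with $\bm{E}=-\bM^\bP$; imposing $Y$-periodicity in $y_1,y_2$ turns $\bchi^M_\bP(y+\be_\alpha)=\bchi^M_\bP(y)$ into an algebraic system that, combined with $\bP=\bP^T$, forces $\bP=0$. Hence the non-negative quadratic form on the finite-dimensional space of symmetric $2\times 2$ matrices vanishes only at the origin and is therefore \emph{coercive}. The argument for $\cchom$ is identical, substituting the affine rigid motion by the Kirchhoff ansatz $\bchi=(-y_3\partial_1 u,-y_3\partial_2 u,u)$ with $u$ quadratic in $(y_1,y_2)$: a non-affine quadratic polynomial cannot be $Y$-periodic, so the corresponding curvature vanishes.

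The \emph{norm equivalence} on $H^1_0(\Sigma)^2\times H^2_0(\Sigma)$ is obtained by promoting the identity above to variable $(\baru,u_3)$: setting $\bchi^*(\barx,y):=\sum(D_\barx\baru)_{ij}(\barx)\bchi^M_{ij}(y)+\sum(\nabla^2_\barx u_3)_{ij}(\barx)\bchi^B_{ij}(y)$ and $\bm{E}:=D_y(\bchi^*)+(D_\barx\baru)_{\mathrm{ext}}-y_3(\nabla^2_\barx u_3)_{\mathrm{ext}}$ gives
\begin{displaymath}
|Y^s|\,\ahom((\baru,u_3),(\baru,u_3)) = \int_\Sigma\!\left[(\A\bm{E},\bm{E})_{Y^s}+(\bR\llbracket\bchi^*\rrbracket,\llbracket\bchi^*\rrbracket)_{S^c}\right]\dbx.
\end{displaymath}
Coercivity of $\A$ reduces the claim to the pointwise (in $\barx$) lower bound
\begin{displaymath}
\min_{\bchi\in H^1_{\#,0}(Y^s)^3}\!\left\{\|D_y(\bchi)+\bm{P}_{\mathrm{ext}}-y_3\bm{Q}_{\mathrm{ext}}\|_{L^2(Y^s)}^2+\|\llbracket\bchi\rrbracket\|_{L^2(S^c)}^2\right\}\geq c\left(|\bm{P}|^2+|\bm{Q}|^2\right)
\end{displaymath}
for symmetric $\bm{P},\bm{Q}\in\mathbb{R}^{2\times 2}$, which is the \emph{main obstacle}. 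By non-negativity and finite-dimensionality of the joint quadratic form in $(\bm{P},\bm{Q})$, it suffices to show that vanishing energy forces $\bm{P}=\bm{Q}=0$: integrating a symmetric strain of the form $-\bm{P}_{\mathrm{ext}}+y_3\bm{Q}_{\mathrm{ext}}$ yields exactly an affine part absorbing $\bm{P}_{\mathrm{ext}}$ plus a Kirchhoff-type part with $u$ quadratic in $(y_1,y_2)$ absorbing $-y_3\bm{Q}_{\mathrm{ext}}$, and the two rigidity arguments above rule out each piece separately under $Y$-periodicity. Closing with Korn's inequality for $\baru\in H^1_0(\Sigma)^2$ and Poincar\'e iterated twice on $u_3\in H^2_0(\Sigma)$ delivers equivalence to the standard norm.
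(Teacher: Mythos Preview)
The paper does not actually prove this lemma; it states the result and points to Theorem~2 in \cite{GKOS}. Your argument is the standard variational one that reference carries out: express the homogenized quadratic form via the energy identity on the cell, use coercivity of $\A$ and positive-definiteness of $\bR$ to reduce vanishing energy to a rigidity statement, and kill the resulting constant\,/\,Kirchhoff strain with in-plane periodicity. That is the correct route and your outline is sound.

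Two places deserve one more sentence of care. First, when you conclude that $D(\bchi)=-\bM^\bP$ (or $-\bM^\bP+y_3\bM^\bQ$) implies a single global affine-plus-Kirchhoff formula, you are implicitly using not only the vanishing jumps $\llbracket\bchi\rrbracket=0$ on $S^c$ but also that the contact surfaces have positive two-dimensional measure and that the interior of $\overline{Y^s}$ is connected (both stated in the paper); otherwise the rigid-motion constants on different yarns need not match. Second, the claim that ``the two rigidity arguments rule out each piece separately'' is not literally a decoupling of two periodic functions, but it is justified because the affine part and the Kirchhoff part carry different powers of $y_3$: the increment $\bchi(y+\be_\alpha)-\bchi(y)$ has a $y_3$-linear contribution coming solely from the Kirchhoff block, which must vanish on its own and forces $\bQ=0$, after which the $\bP$-argument applies verbatim. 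With these two remarks made explicit, your proof is complete and matches what the cited reference does.
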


Additionally, one can find the proof of the following statement on the permeability tensor \eg in Chapter 7 of \cite{sanchezPalencia:Book}.
\begin{proposition}\label{proposition:cellProblemsFluid}
	For each $i\in \{1,2,3\}$, the fluid cell problems \cref{eq:cellProblemsFluid} have a unique solution $\bomega_i\in H^1_\text{per,div}(Y^f)$. The expressions \cref{eq:permeabilityTensor} are well-defined and the resulting permeability tensor $\bK\in\mathbb{R}^{3\times 3}$ is symmetric and positive definite.
\end{proposition}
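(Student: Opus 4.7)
The plan is to apply Lax--Milgram on the Hilbert space $H^1_\text{per,div}(Y^f)$, then derive symmetry and positivity of $\bK$ directly from the variational formulation \cref{eq:cellProblemsFluid}. First I would verify that $H^1_\text{per,div}(Y^f)$ is a closed subspace of $H^1(Y^f)^3$ (divergence-free and trace-vanishing are both continuous constraints), hence a Hilbert space with the $H^1$-norm. On this space the bilinear form $a(\bomega,\bW)=(\nabla\bomega,\nabla\bW)_{Y^f}$ is obviously continuous; the key point is coercivity. Here I would invoke a Poincar\'e inequality on $H^1_\text{per,div}(Y^f)$: since $\partial Y^s$ has positive $(d{-}1)$-Hausdorff measure (by the Lipschitz assumption on the inclusions in $Y^s$) and functions in the space vanish there, one obtains $\Vert \bW\Vert_{L^2(Y^f)}\le C\Vert\nabla\bW\Vert_{L^2(Y^f)}$. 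The right-hand side $\ell_i(\bW)=(\be_i,\bW)_{Y^f}$ is continuous by Cauchy--Schwarz. Lax--Milgram then yields a unique $\bomega_i\in H^1_\text{per,div}(Y^f)$.

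Next I would establish that \cref{eq:permeabilityTensor} is well-defined and symmetric. Since $\bomega_i\in H^1$, the integral $(\nabla\bomega_i,\nabla\bomega_j)_{Y^f}$ is a finite real number. Testing \cref{eq:cellProblemsFluid} for $\bomega_i$ against $\bW=\bomega_j$ and vice versa gives
\begin{displaymath}
(\nabla\bomega_i,\nabla\bomega_j)_{Y^f}=(\be_i,\bomega_j)_{Y^f}=(\be_j,\bomega_i)_{Y^f}=(\nabla\bomega_j,\nabla\bomega_i)_{Y^f},
\end{displaymath}
so $k_{ij}=k_{ji}$ follows immediately from symmetry of the $L^2$-inner product.

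For positive definiteness, given $\xi\in\mathbb{R}^3$ I would consider the linear combination $\bomega_\xi=\sum_i\xi_i\bomega_i$, which by linearity solves $(\nabla\bomega_\xi,\nabla\bW)_{Y^f}=(\xi,\bW)_{Y^f}$ for all admissible $\bW$. Setting $\bW=\bomega_\xi$ gives
\begin{displaymath}
\xi^T\bK\xi=\frac{1}{\vert Y^f\vert}\Vert\nabla\bomega_\xi\Vert_{L^2(Y^f)}^2\ge 0.
\end{displaymath}
Suppose equality holds. Then $\nabla\bomega_\xi=0$ on $Y^f$, and combined with the Dirichlet trace on $\partial Y^s$ (of positive measure) and connectedness of $Y^f$, Poincar\'e forces $\bomega_\xi\equiv 0$. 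Substituting back into the cell equation yields $(\xi,\bW)_{Y^f}=0$ for every $\bW\in H^1_\text{per,div}(Y^f)$, and I must show this forces $\xi=0$.

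The main obstacle is this last implication: ruling out the possibility that a constant vector be $L^2(Y^f)$-orthogonal to the entire test space. The standard argument proceeds by periodic Helmholtz/De Rham decomposition. If $\xi\in L^2(Y^f)^3$ annihilates $H^1_\text{per,div}(Y^f)$, then in particular it annihilates $C^\infty_c(Y^f)^3\cap\ker\operatorname{div}$, so $\xi=\nabla p$ distributionally; since $\xi$ is constant, $p(y)=\xi\cdot y+c$. Testing against a $\bW\in H^1_\text{per,div}(Y^f)$ and integrating by parts, the bulk term vanishes (as $\nabla\cdot\bW=0$) and the contribution on $\partial Y^s$ is zero; only the jump of $p$ across opposite periodic faces survives, giving $(\xi,\bW)_{Y^f}=\sum_k\xi_k\int_{\text{face}_k^+\cap Y^f}W_k\,dS$. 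To conclude $\xi=0$, I would exhibit for each $k\in\{1,2,3\}$ a field $\bW^{(k)}\in H^1_\text{per,div}(Y^f)$ whose mean flux $\int W^{(k)}_k\,dS$ across face $k$ is nonzero. For $k=1,2$ such fields are easily built from periodic shear-type constructions, since $Y^f$ is connected and meets the lateral faces. For $k=3$ (the normal direction across the sieve), the existence of such a flow-through test field hinges on the geometric assumption that $Y^f$ actually perforates the sieve; this is precisely the permeability non-degeneracy hypothesis alluded to after \cref{eq:system_macro_stationaryPlate} and to be analyzed in the forthcoming reference \cite{Larysa}. Under this geometric hypothesis the construction is routine (e.g.\ by suitable cut-offs combined with a divergence correction), and $\xi=0$ follows, completing positive definiteness.
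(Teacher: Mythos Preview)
The paper does not actually supply a proof of this proposition; it simply refers the reader to Chapter~7 of \cite{sanchezPalencia:Book}. Your argument is precisely the classical one found in that reference: Lax--Milgram on the constrained Hilbert space $H^1_\text{per,div}(Y^f)$ for existence and uniqueness, symmetry of $\bK$ by testing the $i$-th equation with $\bomega_j$, and positive semidefiniteness via $\xi^T\bK\xi = \vert Y^f\vert^{-1}\Vert\nabla\bomega_\xi\Vert^2_{L^2(Y^f)}$. So in substance your proof and the paper's (cited) proof coincide.

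Where your write-up goes beyond the paper is in the strict-positivity step. You correctly isolate the nontrivial implication---that $(\xi,\bW)_{Y^f}=0$ for all admissible $\bW$ forces $\xi=0$---and trace it back to the existence of divergence-free periodic test fields carrying nonzero flux across each coordinate face. Your observation that the $k=3$ case requires a genuine geometric perforation hypothesis on $Y^f$ is exactly right, and is in fact more explicit than anything the paper states; the paper tacitly assumes the sieve is permeable (cf.\ the remarks surrounding the reference to \cite{Larysa}) but never spells out that strict positive definiteness of $\bK$ in the transverse direction depends on it. Your treatment is therefore correct and, on this point, sharper than the paper's own discussion.
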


The statements are sufficient to verify the existence of solutions to the presented FSI problem, \eg by a standard Galerkin approach. A detailed proof is given in \cite{modelDescription}.
\begin{proposition}
	Let $\bm{f}^\pm \in L^2((0,T), L^2(\O^\pm)^3), g_3\in L^2((0,T), L^2(\Sigma))$. Let further the assumptions of \Cref{proposition:cellProblemsStructure} be satisfied. Then the system \cref{eq:system_macro} has a unique pressure free solution $(\bv,\baru,u_3), \bv\vert_{\O^\pm}=\bv^\pm$ with
	\begin{displaymath}
		\begin{aligned}
			\bv &\in 
			L^2((0,T),\mathcal{V}_\text{div}) \cap L^\infty((0,T),L^2(\Opm)^3), \\
			\baru &\in 
			L^2((0,T),H_0^1(\Sigma)^2), \\
			u_3 &\in 
			L^\infty((0,T),H^2_0(\Sigma)) \cap W^{1,\infty}((0,T),L^2(\Sigma)),
		\end{aligned}
	\end{displaymath}
	where
	\begin{align*}
		\mathcal{V}_\text{div}
		&=\{\bv\in \cV: \nabla\cdot \bv=0\text{ in }\Opm \}, \\
		\mathcal{V}
		&=\{\bv\in H^1(\Opm)^3: \bv=\bm{0}\text{ on }\partial^\text{in}\O\cup \partial^\text{no-slip}\O\}.
	\end{align*}
\end{proposition}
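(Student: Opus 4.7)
The plan is a standard Galerkin argument in the spirit of \cite{OPS, Muha:FSI2, modelDescription}. First I would derive a monolithic weak formulation by testing the Stokes momentum equation against a divergence-free test $\bV\in \mathcal{V}_\text{div}$ with $\bV^-\vert_\Sigma = \bV^+\vert_\Sigma$, the bending plate equation against $U_3 \in H^2_0(\Sigma)$, and the in-plane membrane equation against $\barU \in H^1_0(\Sigma)^2$. Integration by parts on $\Omega^\pm$ produces an interface term $\int_\Sigma \llbracket 2\mu D(\bv) - p\bI\rrbracket \be_3 \cdot \bV$, which by the Darcy interface condition in \cref{eq:system_macro} becomes $\int_\Sigma \hk^{-1}(\bv - \partial_t u_3 \be_3)\cdot \bV$; summing the three tested identities, the pressure disappears thanks to $\nabla\cdot\bV=0$, the viscous term is $2\mu(D(\bv),D(\bV))_{\Opm}$, the elastic term is $\ahom((\baru,u_3),(\barU,U_3))$, and the Darcy surface term is $(\hk^{-1}(\bv-\partial_t u_3\be_3), \bV - U_3\be_3)_\Sigma$, which is symmetric and positive by \Cref{proposition:cellProblemsFluid}.

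Second, I would set up a finite-dimensional Galerkin scheme by choosing Hilbert bases $\{\bV_k\}\subset \mathcal{V}_\text{div}$, $\{\barU_k\}\subset H^1_0(\Sigma)^2$, $\{U_{3,k}\}\subset H^2_0(\Sigma)$ (e.g.\ eigenfunctions of a coercive reference operator in each space) and seek $(\bv_n, \baru_n, u_{3,n})$ in the span of the first $n$ elements solving the weak equation for every Galerkin test function. The resulting ODE system is of second order in the coefficients of $u_{3,n}$ and first order in those of $\bv_n$; the coefficients of $\baru_n$ are determined algebraically in terms of $u_{3,n}$ through the coercive membrane bilinear form $(\aahom D(\cdot), D(\cdot))$, so that $\baru_n = T(u_{3,n})$ with $T\colon H^2_0(\Sigma)\to H^1_0(\Sigma)^2$ linear and continuous by Lax--Milgram applied to \Cref{proposition:stiffnessTensorsCoercive}. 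Cauchy--Lipschitz then yields a unique maximal Galerkin solution satisfying the zero initial data.

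Third, the a priori bound comes from testing the Galerkin system against the natural energy triple $(\bV, \barU, U_3) = (\bv_n, \partial_t\baru_n, \partial_t u_{3,n})$. By the symmetry of $\A$ and hence of $\ahom$ stated in \Cref{proposition:stiffnessTensorsCoercive}, the stiffness term becomes the time derivative of $\tfrac{1}{2}\Vert(\baru_n,u_{3,n})\Vert_\text{hom}^2$; the viscous and Darcy terms are nonnegative by \Cref{assumption:modelParameters} and \Cref{proposition:cellProblemsFluid}. Gr\"onwall's lemma then provides uniform bounds of $\bv_n$ in $L^\infty(0,T;L^2(\Opm)^3)\cap L^2(0,T;H^1)$, of $(\baru_n,u_{3,n})$ in $L^\infty(0,T;H^1_0(\Sigma)^2\times H^2_0(\Sigma))$, and of $\partial_t u_{3,n}$ in $L^\infty(0,T;L^2(\Sigma))$, so the Galerkin solution extends to $[0,T]$. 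Extracting a weakly-$*$ convergent subsequence and passing to the limit in every (linear) term yields a weak solution of \cref{eq:system_macro}; uniqueness is immediate from linearity and the same energy identity applied to the difference of two candidate solutions, and the pressure is recovered a posteriori from the divergence-free formulation by de Rham's theorem together with the outflow condition.

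The main obstacle is the mixed parabolic--hyperbolic character of the system together with the non-standard Darcy surface coupling, in which $\bv$ and $\partial_t u_3$ enter the same surface integral: the natural energy must be found as the sum of the fluid kinetic energy, the plate kinetic energy, and the symmetric homogenized elastic energy, with the Darcy form generating an additional positive dissipation on the plate trace. Making sense of $\hrho\partial_{tt}u_3$ in the limit is handled by interpreting it distributionally and integrating by parts in time against compactly supported test functions, as in \cite{modelDescription}, while the algebraic membrane coupling is absorbed by the operator $T$ above, so that $\partial_t \baru = T(\partial_t u_3)$ is well defined whenever $\partial_t u_3$ is and the symmetry of $\ahom$ with respect to the joint argument $(\baru,u_3)$ is preserved in the Galerkin testing.
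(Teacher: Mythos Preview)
Your proposal is correct and follows exactly the route the paper indicates: the paper does not spell out a proof but refers to a ``standard Galerkin approach'' detailed in \cite{modelDescription} and adopted from \cite{OPS, Muha:FSI2, Muha:FSI3}, which is precisely the monolithic weak formulation, finite-dimensional Galerkin scheme, energy estimate via the test triple $(\bv_n,\partial_t\baru_n,\partial_t u_{3,n})$, and weak compactness argument you describe. Your handling of the algebraic membrane constraint through the linear map $T$ and of the Darcy surface term as an additional nonnegative dissipation is the natural way to close the energy identity in this setting.
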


One can additionally verify that the well-posedness of the FSI system is still granted when one switches from constant stiffness and permeability tensors to the natural choice of tensors with $L^\infty$-regularity on $\Sigma$. For this purpose it is necessary to assume that the coercivity and symmetry from \Cref{proposition:stiffnessTensorsCoercive} and \Cref{proposition:cellProblemsFluid} remain valid for the generalized tensors .

Lastly, by inspecting the plate equations \cref{eq:plate_macro}, it is clear that $\baru$ vanishes whenever the coupling stiffness tensor $\bbhom$ is zero. In fact, this latter condition is frequently met under symmetry assumptions on the structure $Y^s$ and the model parameters $\A$ and $\bR$, see Lemma 6.9 in \cite{Wackerle:VonKarman}. Hence, the main displacement variable of interest is the plate's deflection.

\section{Computation of the effective textile coefficients}
\label{sec:numericalHomogenization}

In this section, an overview on the numerical computation of the homogenized stiffness tensors $\aahom$, $\bbhom$, $\cchom$ is given.
For the yarn structures in mind, an efficient dimension reduction approach with 1D beam finite elements, generalized by the incorporation of contact conditions, is presented in \cite{vladimir:textileBeams} for the computation of $\aahom$. The extension to the computation of $\bbhom$ and $\cchom$ is presented here. \\

For implementation purposes, it proves to be beneficial to introduce the perturbation functions $\bS_{ij}^{M,B}\in C^\infty(Y)^3$ as
\begin{equation}\label{eq:perturbationChoices}
	\begin{gathered}
		\bS_{11}^M(y) = \begin{pmatrix}
			y_1 \\ 0 \\ 0
		\end{pmatrix}, \quad
		\bS_{12}^M(y) = \frac{1}{2}\begin{pmatrix}
			y_2 \\ y_1 \\ 0
		\end{pmatrix}, \quad
		\bS_{22}^M (y) = \begin{pmatrix}
			0 \\ y_2 \\ 0
		\end{pmatrix}, \\
		\bS_{11}^B(y) = \frac{1}{2}\begin{pmatrix}
			-2y_1 y_3 \\ 0 \\ y_1^2
		\end{pmatrix}, \quad
		\bS_{12}^B(y) = \frac{1}{2}\begin{pmatrix}
			-y_2 y_3 \\ -y_1 y_3\\ y_1 y_2
		\end{pmatrix}, \quad
		\bS_{22}^B(y) = \frac{1}{2}\begin{pmatrix}
			0 \\ -2y_2 y_3 \\ y_2^2
		\end{pmatrix},
	\end{gathered}
\end{equation}
which are chosen as analytical solutions of the differential equations
\begin{equation}\label{eq:perturbationFunctions_PDE}
	D(\bS_{ij}^M) = \bM^{ij}, 
	\quad D(\bS_{ij}^B) = -y_3 \bM^{ij} \quad \text{in }Y^s,
	\quad \llbracket\bS_{ij}^{M,B}\rrbracket=\bm{0} \quad \text{on }S^c
\end{equation}
for $i,j=1,2$.
The choice of $\bS_{ij}^{M,B}$ with the stated properties is not uniquely determined but every function satisfying \cref{eq:perturbationFunctions_PDE} is suitable for what follows. 

By defining the augmented cell solutions $\bm{m}_{ij}^{M,B} = \bchi_{ij}^{M,B} + \bS_{ij}^{M,B} \in H^1(Y^s)^3$, one attains the equivalent cell problem formulations
\begin{equation}\label{eq:corrector_problems_reformulation}
	\left(\A D(\bm{m}_{ij}^{M,B}), D(\bX) \right)_{Y^s}
	+ \left(\bR \llbracket \bm{m}_{ij}^{M,B} \rrbracket, \llbracket\bX \rrbracket \right)_{S^c}
	= 0
\end{equation}
for all $\bX\in H^1_{\#}(Y^s)^3$, with the generalized periodicity condition that $\bm{m}_{ij}^{M,B}-\bS_{ij}^{M,B}$ are $Y$-periodic. 
The solution of the above formulation is unique up to an additive constant, since the vanishing mean value in the solution space is dropped.
It is intuitive to interpret the augmented cell solutions as actual displacement fields on $Y^s$.

The computation of the homogenized tensor entries \eqref{eq:homogenizedTensors} with the augmented cell solutions in the continuous setting becomes
\begin{equation}\label{eq:homogenizedTensors_augmented}
	\begin{aligned}
		\ahom_{ijkl} &= 
		\frac{1}{\vert Y^s\vert}
		\left[ 
		\left(\A D(\bm{m}_{ij}^M), D(\bm{m}_{kl}^M) \right)_{Y^s}
		+ \left(\bR \llbracket\bm{m}_{ij}^M \rrbracket, \llbracket\bm{m}_{kl}^M \rrbracket \right)_{S^c}
		\right], \\
		\bhom_{ijkl} &= 
		\frac{1}{\vert Y^s\vert} 
		\left[
		\left(\A D(\bm{m}_{ij}^B), D(\bm{m}_{kl}^M)\right)_{Y^s}
		+ \left(\bR \llbracket\bm{m}_{ij}^B\rrbracket, \llbracket\bm{m}_{kl}^M \rrbracket  \right)_{S^c}
		\right], \\
		\chom_{ijkl} &= 
		\frac{1}{\vert Y^s\vert} 
		\left[
		\left(\A D(\bm{m}_{ij}^B), D(\bm{m}_{kl}^B) \right)_{Y^s} 
		+ \left(\bR \llbracket\bm{m}_{ij}^B \rrbracket, \llbracket\bm{m}_{kl}^B \rrbracket \right)_{S^c}
		\right].
	\end{aligned}
\end{equation}
As can be seen, the uniqueness of $\bm{m}_{ij}^{M,B}$ up to an additive constant is sufficient in \cref{eq:homogenizedTensors_augmented}, since the functions only appear in gradient and jump terms. \\

Generally speaking, under the assumption of an appropriate choice of finite elements, the discrete formulation of the augmented cell problems \cref{eq:corrector_problems_reformulation} are the linear systems
\begin{displaymath}
	\bS \bm{m}_{ij}^{M,B} = \bm{0} \text{ + generalized $Y$-periodic boundary conditions} ,
\end{displaymath}
where the stiffness matrix $\bS$ encodes the bilinear form
\begin{displaymath}
	a(\bchi,\bX) =  \left(\A D(\bchi), D(\bX) \right)_{Y^s}
	+ \left(\bR \llbracket\bchi\rrbracket, \llbracket\bX\rrbracket\right)_{S^c}
\end{displaymath}
and $\bm{m}_{ij}^{M,B}$ are the respective DOF vectors, denoted with the same symbols as their continuous counterparts. The underlying global stiffness matrix $\bS$ remains the same for each cell problem and hence has to be assembled only once. Numerical uniqueness is attained by fixing the value of an interior DOF.

With this notation, the discrete form of the expressions \cref{eq:homogenizedTensors_augmented} reads
\begin{displaymath}
	\begin{aligned}
		\ahom_{ijkl} &= \frac{1}{\vert Y^s\vert} \left(\bm{m}_{kl}^M\right)^T \bm{S} \bm{m}_{ij}^M, \\
		\bhom_{ijkl} &= \frac{1}{\vert Y^s\vert} \left(\bm{m}_{kl}^M\right)^T \bm{S} \bm{m}_{ij}^B, \\
		\chom_{ijkl} &= \frac{1}{\vert Y^s\vert} \left(\bm{m}_{kl}^B\right)^T \bm{S} \bm{m}_{ij}^B.
	\end{aligned}
\end{displaymath}

We recall from \cite[Lemma 6.9]{Wackerle:VonKarman}, the sufficient conditions  for a vanishing coupling stiffness tensor. The stated conditions are met for typical woven filters made out of an homogeneous, isotropic material.

\begin{proposition} \label{proposition:vanishingBHom}
Let $Y^s$ be symmetric \wrt the planes $\{y_1=\frac{1}{2}\}$ and $\{y_2=\frac{1}{2}\}$ in the sense that the transformations
	\begin{align*}
	&{\bf T}_1 \colon Y^s\to Y^s,\quad  
	y\mapsto (1-y_1) \be_1 + y_2 \be_2 + y_3 \be_3, \\
	&{\bf T}_2 \colon Y^s\to Y^s,\quad  
	y\mapsto y_1 \be_1 + (1-y_2) \be_2 + y_3 \be_3
	\end{align*}
	are well-defined. 
	
	Let $\tilde{Y}^s$ denote the restriction of $Y^s$ to a quarter of the unit cell $(0,\frac{1}{2})^2\times (-\frac{1}{2},\frac{1}{2})$. Assume that $\tilde{Y}^s$ is rotational-symmetric \wrt the diagonal axis $\{y_1=y_2,y_3=0\}$ and $\{y_1=y_2=\frac{1}{4}\}$ in the sense that the transformations
	\begin{align*}
	&{\bf T}_3 \colon \tilde{Y}^s \to \tilde{Y}^s,\quad  
	y\mapsto y_2 \be_1 + y_1 \be_2 - y_3 \be_3, \\
	& {\bf T}_4 \colon \tilde{Y}^s \to \tilde{Y}^s,\quad  
	y\mapsto \left(\frac{1}{2} - y_2\right) \be_1 + y_1 \be_2 + y_3 \be_3
	\end{align*}
	are well-defined.
	
	Then we have $\bbhom=\bm{0}$, as well as the additional symmetry
	\begin{equation*}
	\ahom_{1111}=\ahom_{2222}, \quad \chom_{1111}=\chom_{2222}.
	\end{equation*}
\end{proposition}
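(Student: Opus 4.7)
My plan is to exploit the geometric symmetries of $Y^s$, together with the compatible invariance of $\A$ and $\bR$ that they require (which holds for isotropic materials and the symmetric microstructures at hand), to derive transformation laws for the cell solutions $\bm{m}^{M,B}_{ij}$, and then to combine these by changes of variable in \eqref{eq:homogenizedTensors_augmented}. The first step is to observe that, once $\tilde{Y}^s$ is $T_3,T_4$-symmetric and $Y^s$ is the reflected copy of $\tilde{Y}^s$ across the planes $\{y_1=\frac{1}{2}\}$ and $\{y_2=\frac{1}{2}\}$, the formulas defining $T_3$ and $T_4$ both extend (using the $Y$-periodicity to handle wrap-around in the off-diagonal quadrants) to genuine global symmetries of $Y^s$: the involution $T^{*}\colon y\mapsto(y_2,y_1,-y_3)$, combining a diagonal swap with a $y_3$-flip, and the rotation $T_4\colon y\mapsto(\frac{1}{2}-y_2,y_1,y_3)$ without $y_3$-flip. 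For each such symmetry $T$ with induced orthogonal matrix $R$, the change of variable $y\mapsto Ty$ in the cell problem \eqref{eq:cellProblemStructure} together with uniqueness of the cell solution (up to an additive constant) gives a relation of the form $R\,\bm{m}^M_{ij}(Ty)=\varepsilon^M_{ij}\,\bm{m}^M_{\pi(i)\pi(j)}(y)$ modulo constants, and analogously for $\bm{m}^B_{ij}$; here $\pi$ is the index permutation induced by $R$ on $\{\be_1,\be_2\}$ and $\varepsilon^{M,B}_{ij}\in\{\pm 1\}$ is read off from how $\bM^{ij}$, respectively $-y_3\bM^{ij}$, transforms. Crucially, when $T$ flips $y_3$ the bending signs pick up an extra minus compared with the membrane ones.

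Inserting these rules into the formula for $\bhom_{ijkl}$ produces three families of identities. The reflections $T_1$ and $T_2$ flip the sign exactly when an index pair equals $(1,2)$, so they force $\bhom_{ijkl}=0$ whenever precisely one of $(i,j),(k,l)$ is $(1,2)$; only $\bhom_{1111},\bhom_{2222},\bhom_{1122},\bhom_{2211},\bhom_{1212}$ can remain. The rotation $T_4$ swaps $\be_1$ and $\be_2$ without $y_3$-flip and yields $\bhom_{1111}=\bhom_{2222}$ and $\bhom_{1122}=\bhom_{2211}$. The involution $T^{*}$ performs the same index swap but with a $y_3$-flip, introducing the extra minus on bending and giving $\bhom_{1111}=-\bhom_{2222}$, $\bhom_{1122}=-\bhom_{2211}$, together with $\bhom_{1212}=-\bhom_{1212}$. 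Combining these chains forces all remaining entries to vanish, hence $\bbhom=\bm{0}$. The diagonal equalities $\ahom_{1111}=\ahom_{2222}$ and $\chom_{1111}=\chom_{2222}$ fall out directly from the $T_4$-change of variable applied to the corresponding integrals in \eqref{eq:homogenizedTensors_augmented}, where no $y_3$-flip is involved and hence no sign mismatch arises.

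The main technical obstacle is the first step: verifying carefully that $T_3$ and $T_4$ really do extend to global symmetries of $Y^s$ (handling the wrap-around across quadrants via the $Y$-periodicity and the $T_1,T_2$-reflections), and correctly tracking how the induced orthogonal transformations act on the vector-valued cell solutions, on the tensors $\bM^{ij}$, and on the jump bilinear form $\bR\llbracket\cdot\rrbracket$ along the contact surface $S^c$. Once the transformation laws are in place, with the compatible invariance of $\A$ and $\bR$ verified, the remaining deductions of $\bbhom=\bm{0}$ and of the diagonal identities are essentially a relabeling of integration variables.
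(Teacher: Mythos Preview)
The paper does not actually prove this proposition; it simply recalls it from \cite[Lemma 6.9]{Wackerle:VonKarman} without argument. Consequently there is no in-paper proof to compare against, and your proposal has to be assessed on its own merits.

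Your strategy is the standard and correct one for this kind of result. The logical skeleton is sound: the reflections $T_1,T_2$ (linear part $\mathrm{diag}(-1,1,1)$ resp.\ $\mathrm{diag}(1,-1,1)$) send $\bM^{12}\mapsto-\bM^{12}$ and fix $\bM^{11},\bM^{22}$ while leaving $y_3$ unchanged, so membrane and bending cell solutions acquire the same sign, and hence $\bhom_{ijkl}=0$ whenever exactly one of $(i,j),(k,l)$ equals $(1,2)$. The global diagonal involution $T^\ast\colon y\mapsto(y_2,y_1,-y_3)$ swaps $\bM^{11}\leftrightarrow\bM^{22}$, fixes $\bM^{12}$, and flips $y_3$, so the bending transformation picks up an extra $-1$ relative to membrane; this yields $\bhom_{1111}=-\bhom_{2222}$, $\bhom_{1122}=-\bhom_{2211}$, $\bhom_{1212}=-\bhom_{1212}$. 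The global rotation induced by $T_4$ (linear part a $90^\circ$ in-plane rotation, $y_3$ fixed) gives the same index swap without the extra sign, hence $\bhom_{1111}=\bhom_{2222}$ and $\bhom_{1122}=\bhom_{2211}$, and also $\ahom_{1111}=\ahom_{2222}$, $\chom_{1111}=\chom_{2222}$. Combining the two chains kills the remaining five entries of $\bbhom$.

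Two points deserve to be made fully explicit when you write this out. First, the extension of $T_3$ and $T_4$ from $\tilde{Y}^s$ to all of $Y^s$ really does require both the reflections $T_1,T_2$ and the in-plane $Y$-periodicity (for instance, $T_4$ applied to a point with $y_2>\tfrac12$ lands at negative first coordinate and must be wrapped back); you flag this but should carry it out in each quadrant. Second, the argument needs the invariance $R\,\A(Ty)\,R^T=\A(y)$ (as a fourth-order tensor relation) and the compatible transformation of $\bR$ on $S^c$; for isotropic $\A$ and the explicit $\bR=\delta^{-1}\bm{\eta}\otimes\bm{\eta}+\gamma_{\text{friction}}(\bI-\bm{\eta}\otimes\bm{\eta})$ this follows because the unit normal $\bm{\eta}$ transforms as $R\bm{\eta}$, but it is an additional hypothesis beyond the purely geometric symmetry of $Y^s$ stated in the proposition and should be stated as such.
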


\begin{example}
	As the standard example, we consider a plain woven filter. The domains of the proposition are illustrated in Figure \ref{fig:symmetryExample}.
	\begin{figure}[H] 
		\centering
		\includegraphics[trim={0cm 4cm 0cm 0cm}, clip, width=0.8\textwidth]{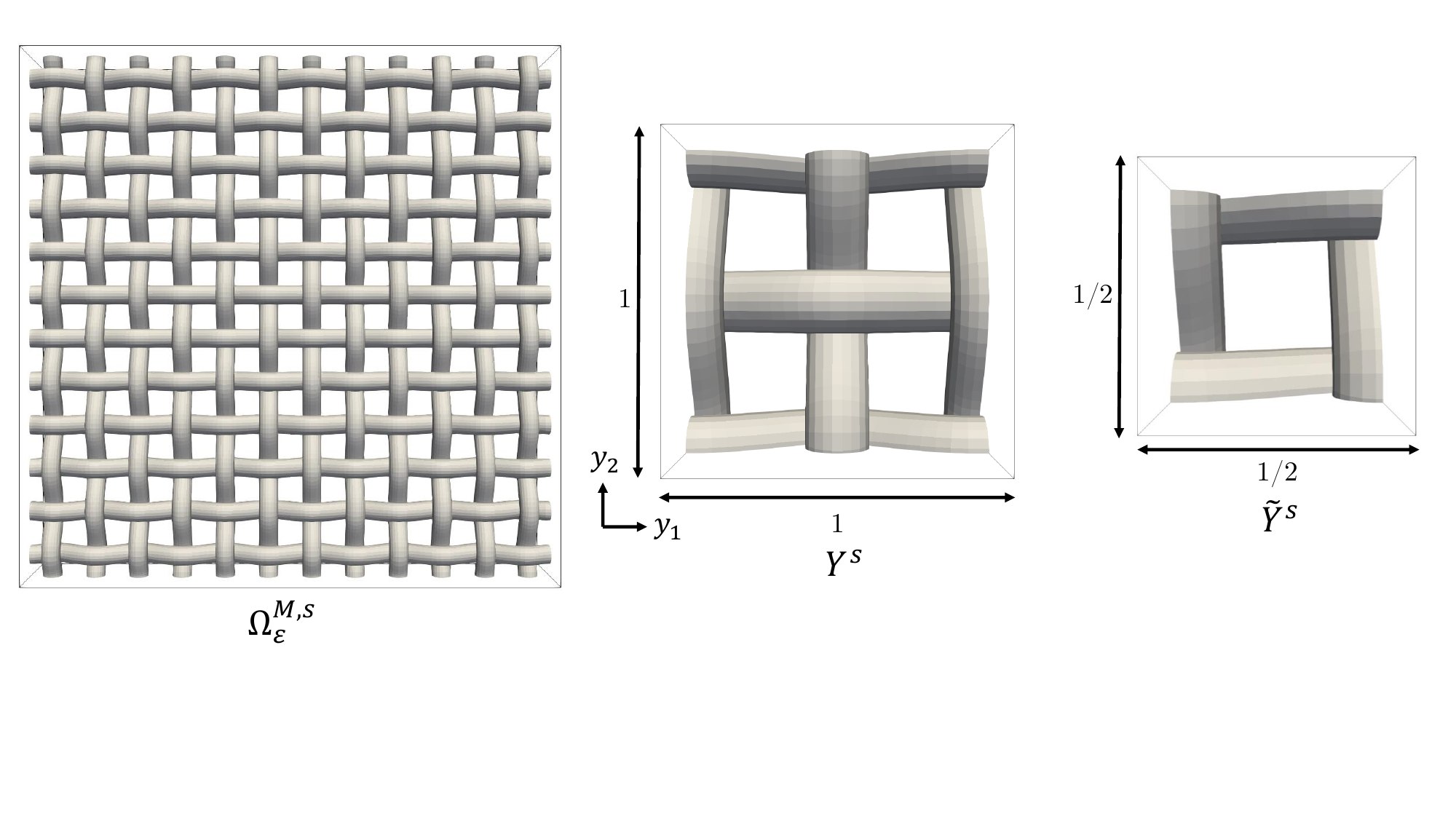}
		\caption[Example of symmetric filter]{Illustration of the structure domain $\O_\e^{M,s}$ (left), solid part of unit cell $Y^s$ (center) and quarter of the unit cell $\tilde{Y}^s$ (right) for a plain woven filter.}
		\label{fig:symmetryExample}
	\end{figure}
\end{example}

For the 1D beam formulation, it is assumed that each Lipschitz domain $\O$ in $Y^s$ can be described as a curved rod of length $L$ with constant cross-section of characteristic size $r>0$. That is, there exists a smooth curve 
\begin{equation*}
	\bm{\gamma}\colon [0,L]\to \R^3,
	\quad s_1\mapsto \bm{\gamma}(s_1),
	\quad \int_0^L \vert \bm{\gamma}^\prime (s_1)\vert \dx[s_1]= L
\end{equation*} 
parameterized by its arc-length
with well-defined Frenet-Serret frame
\begin{equation*}
	\bt(s_1) = \bm{\gamma}^\prime (s_1), \quad
	\bn(s_1) = \frac{\bt^\prime(s_1)}{\vert \bt^\prime(s_1)\vert}, \quad
	\bb(s_1) = \bt(s_1) \times \bn(s_1)
\end{equation*}
such that
\begin{equation*}
	\O = \{\bPhi(s)=\bm{\gamma}(s_1) + s_2\bn(s_1) + s_3\bb(s_1) : s=(s_1,s_2,s_3)\in (0,L)\times \omega_r\},
\end{equation*}
where $\omega_r = r \omega\subset \R^2$ is a Lipschitz domain centered around $\bm{0}$.

Under the above assumption, the dimension reduction approach from  \cite[Sec. 3]{griso:elementaryDisplacements} allows the restriction of displacement fields on $\O$ to a so-called elementary displacement along the curve $\bm{\gamma}$, \ie the centerline of the yarn. 
\begin{definition}
	Let $\bu \in L^1(\O)^3$ be given, which (with slight abuse of notation) is interpreted as a function of $s=(s_1,s_2,s_3)\in (0,L)\times \omega_r$ by considering $\bu\circ\bPhi$. Its elementary displacement is defined as
	\begin{equation}\label{eq:elementaryDisplacement}
		\bu_e(s) = \bU(s_1) + \bR(s_1) \times (s_2 \bn(s_1) + s_3 \bb(s_1)),
	\end{equation}
	where
	\begin{equation}\label{eq:expressions_elementaryDisplacement}
		\begin{aligned}
			\bU(s_1)
			&= \frac{1}{r^2 \vert\omega\vert} \int_{\omega_r} \bu(s_1,s_2,s_3) \dx[(s_2,s_3)], \\
			\bR(s_1) \cdot \bt(s_1)
			&= \frac{1}{(I_2+I_3)r^4} \int_{\omega_r} 
			\left( (s_2 \bn(s_1) + s_3 \bb(s_1)) \times \bu(s) \right)\cdot \bt(s_1) \dx[(s_2,s_3)], \\
			\bR(s_1) \cdot \bn(s_1)
			&= \frac{1}{I_3 r^4} \int_{\omega_r} 
			\left( (s_2 \bn(s_1) + s_3 \bb(s_1)) \times \bu(s) \right)\cdot \bn(s_1) \dx[(s_2,s_3)], \\
			\bR(s_1) \cdot \bb(s_1)
			&= \frac{1}{I_2 r^4} \int_{\omega_r} 
			\left( (s_2 \bn(s_1) + s_3 \bb(s_1)) \times \bu(s) \right)\cdot \bb(s_1) \dx[(s_2,s_3)]
		\end{aligned}
	\end{equation}
	and $I_k= \int_\omega s_k \dx[(s_2,s_3)], k=2,3$ are moments of area.
\end{definition}

Here, $\ba \times\bb$ denotes the standard cross-product in $\R^3$.
The representation \eqref{eq:elementaryDisplacement} can be understood as a displacement of the yarn centerline with an additional rotation of the yarn cross-section along the centerline. The remainder term
\begin{equation*}
	\bu_w =\bu - \bu_e
\end{equation*}
is commonly referred to as warping term and can be imagined as the deformation of the cross-section. In practical application, it is assumed to be small in comparison to the elementary displacement for slender structures. In fact, one has the following a priori estimate from Theorem 3.1 in \cite{griso:elementaryDisplacements}.
\begin{proposition}
	Let $\bu\in H^1(\Omega)^3$ and let $\bu_e,\bu_w$ denote its elementary displacement and the corresponding warping term, respectively. There exists $\overline{r}>0$, solely dependent on $\omega$ and $\bm{\gamma}$, such that there exist a uniform constant $\overline{c}>0$ with
	\begin{align*}
		\Vert \nabla \bu_w \Vert_{L^2(\Omega)}
		&\leq \overline{c} \Vert D(\bu) \Vert_{L^2(\Omega)}, \\
		\Vert \bu_w \Vert_{L^2(\O)} 
		&\leq \overline{c} r \Vert D(\bu) \Vert_{L^2(\Omega)}, \\		
		r \Vert \bR^\prime\Vert_{L^2((0,L))} + \Vert \bU^\prime - \bR\times \bt \Vert_{L^2((0,L))} 
		&\leq  \frac{\overline{c}}{r} \Vert D(\bu) \Vert_{L^2(\Omega)}
	\end{align*}
	for all $r<\overline{r}$. Here, $\bR^\prime,\bU^\prime$ denote the first-order derivative with respect to $s_1$.
\end{proposition}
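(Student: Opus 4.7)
The plan is to adapt Griso's approach for elementary-displacement decompositions of thin curved rods. The structural observation is that, for fixed $s_1$, the linear maps $\bu(s_1,\cdot)\mapsto \bU(s_1)$ and $\bu(s_1,\cdot)\mapsto \bR(s_1)$ from \cref{eq:expressions_elementaryDisplacement} are precisely the $L^2(\omega_r)$-orthogonal projections onto constants and onto first moments in the frame $(\bn,\bb)$. Hence $\bu_e(s_1,\cdot)$ is the $L^2(\omega_r)$-projection of $\bu(s_1,\cdot)$ onto the finite-dimensional space of infinitesimal rigid motions of the cross-section, and the warping $\bu_w(s_1,\cdot)$ has vanishing mean and vanishing first moments on each $\omega_r$. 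This is precisely what unlocks a scaled Poincar\'e--Korn inequality on the cross-section.

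First I would pull $\bu$ back to the reference cylinder $(0,L)\times\omega_r$ via the diffeomorphism $\bPhi$. Choosing $\overline{r}$ small enough, depending only on $\omega$ and on the curvature and torsion of $\bm{\gamma}$, the Jacobian of $\bPhi$ and its inverse remain uniformly bounded, so $L^2$-norms of tensors and of symmetric gradients on $\Omega$ and on the reference cylinder are equivalent up to a universal constant. Slicewise, $\bu_e(s_1,\cdot)$ is a rigid motion of $\omega_r$, so its cross-sectional symmetric gradient vanishes and the cross-sectional components of $D(\bu)$ coincide, up to curvature corrections of relative order $r$, with those of $D(\bu_w)$. The scaled Poincar\'e--Korn inequality applied slicewise to a two-dimensional displacement on $\omega_r$ with vanishing mean and first moments then yields
\begin{equation*}
\Vert \bu_w(s_1,\cdot)\Vert_{L^2(\omega_r)}\le \overline{c}\,r\,\Vert D(\bu_w)(s_1,\cdot)\Vert_{L^2(\omega_r)}, \qquad \Vert \nabla_{(s_2,s_3)}\bu_w(s_1,\cdot)\Vert_{L^2(\omega_r)}\le \overline{c}\,\Vert D(\bu_w)(s_1,\cdot)\Vert_{L^2(\omega_r)}.
\end{equation*}
Integration over $(0,L)$ and the equivalence of norms yield the first two estimates.

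For the third estimate I would differentiate $\bu_e$ along the centerline. Using the Frenet--Serret formulas,
\begin{equation*}
\partial_{s_1}\bu_e = (\bU^\prime - \bR\times\bt) + \bR^\prime\times(s_2\bn + s_3\bb) + \text{curvature corrections},
\end{equation*}
where the corrections are bounded pointwise by a curvature-dependent constant times $r(|\bR|+|\bR^\prime|)$. Taking the $\bt$-component of $D(\bu)=D(\bu_e)+D(\bu_w)$ and projecting it against $1$, $s_2$ and $s_3$ on $\omega_r$ isolates $(\bU^\prime - \bR\times\bt)\cdot\bt$ and the components of $\bR^\prime$ in the frame $(\bn,\bb)$; the projection weights $(r^2|\omega|)^{-1}$ for the mean and $(r^4 I_{2,3})^{-1}$ for the moments are exactly what produces the factor $r$ in front of $\bR^\prime$ and the overall factor $r^{-1}$ on the right-hand side. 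The contribution of $\bu_w$ to these projections is controlled by $r^{-1}\Vert D(\bu)\Vert_{L^2(\Omega)}$ thanks to the first two estimates, and choosing $\overline{r}$ small enough absorbs the curvature corrections into the leading terms.

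The main obstacle will be this last bookkeeping: tracking the different $r$-powers (projection weights of size $r^{-2}$ versus $r^{-4}$), cleanly separating the contributions of $\bU^\prime$, $\bR$ and $\bR^\prime$ inside $\partial_{s_1}\bu_e$ after the pullback by $\bPhi$, and absorbing the curvature cross-terms between $\bR$ and $\bU^\prime$ that arise from $\bn^\prime=-\kappa\bt+\tau\bb$ and $\bb^\prime=-\tau\bn$. Once this bookkeeping is done, the whole argument reduces to a scaled Poincar\'e--Korn inequality on the cross-section plus a change of variables.
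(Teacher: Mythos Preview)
The paper does not prove this proposition at all: it is stated without proof and attributed to Theorem~3.1 in \cite{griso:elementaryDisplacements}. Your outline is essentially a sketch of Griso's original argument---the $L^2(\omega_r)$-projection interpretation of $\bu_e$, the pullback to the reference cylinder, the slicewise scaled Poincar\'e--Korn inequality for fields with vanishing mean and first moments, and the extraction of $\bU',\bR'$ from the $\bt$-component of $D(\bu)$ via moment projections---so there is no methodological divergence to discuss.

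One point to watch in your bookkeeping: the slicewise Korn argument controls only the cross-sectional part $\nabla_{(s_2,s_3)}\bu_w$, not the full $\nabla\bu_w$. To bound $\partial_{s_1}\bu_w$ you still need to invoke the mixed components $D_{1\alpha}(\bu)$ together with the already-obtained bound on $\partial_{s_\alpha}\bu_w$ and on the $s_1$-derivative of $\bu_e$; this is where the third estimate feeds back into the first. Griso handles this circularity by first establishing the $\bR',\bU'$ bounds from the $D_{11}$ and $D_{1\alpha}$ components projected against $1,s_2,s_3$, and only then closing the full gradient estimate. Your plan has the ingredients but inverts this order slightly, so make sure the dependencies close.
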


By discretizing each yarn centerline $\bm{\gamma}$ by a finite sequence of piecewise linear segments, one attains a 1D frame structure with a sequence of nodes denoted by $(\bn_1,\dots,\bn_m)$.
The frame structure serves as a 1D FE mesh with the associated nodal DOF corresponding to the three centerline displacements $\bU$ and the three rotations $\bR$ from \cref{eq:expressions_elementaryDisplacement}, respectively. The interpolation method of choice are standard 1D beam elements in 3D space with 12 DOF per element.

The method is extended in \cite{vladimir:textileBeams} by the introduction of contact node pairs $(\bn_i,\bn_j)$ in-between two yarns, serving as an approximation of the Robin-type interface condition. For an extensive discussion of the assembly of the stiffness matrix and numerical analysis of the method for general linear elasticity problems, the reader is referred to \cite{vladimir:textileBeams}. 

For the incorporation of the generalized $Y$-periodic boundary conditions, an augmented master-slave approach is employed. It requires the evaluation of the FE interpolation of the perturbation functions $\bS_{ij}^{M,B}$ in each periodic node pair $(\bn_1,\bn_2)$ on the lateral boundaries of $Y^s$, respectively.

The resulting numerical solving routine of the cell problems and the computation of the stiffness tensor entries is implemented in the FiberFEM solver of the textile simulation software TexMath \cite{texmath}. As commonly encountered examples in real-world filtration application, the augmented cell solutions for a twill woven filter are presented in \Cref{fig:cellSolutions}. The remaining two solutions are given by rotational symmetry of the filter. For illustration purposes, the periodic unit was repeated five times in each in-plane direction.
\begin{figure}[htbp]
	\centering
	\includegraphics[width=0.8\textwidth]{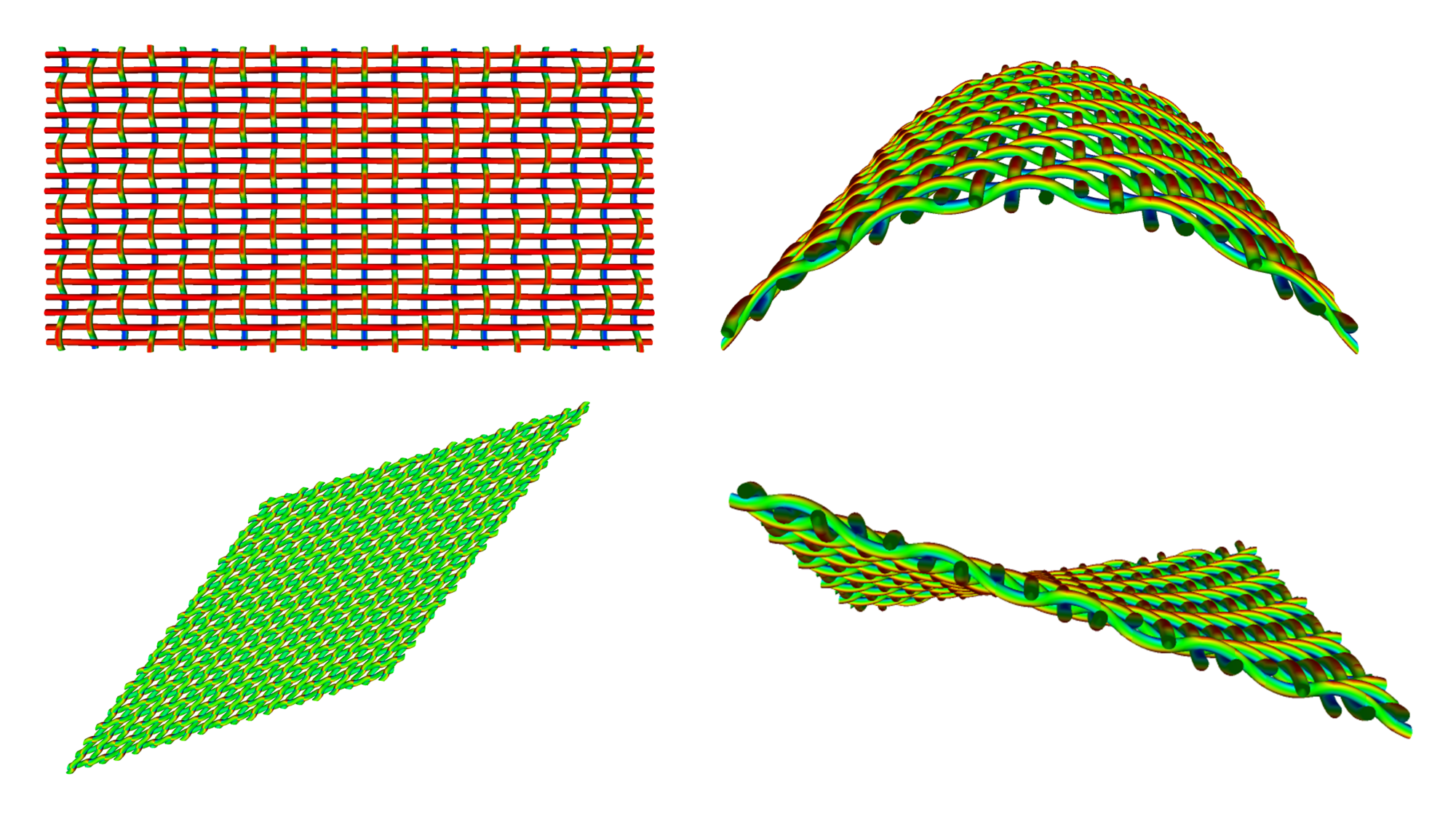}
	\caption{Augmented cell solutions $\bm{m}_{11}^M,\bm{m}_{11}^B$ (top) and $\bm{m}_{12}^M,\bm{m}_{12}^B$ (bottom) interpreted as displacement fields of a twill woven filter. Colors indicate local stresses.}
	\label{fig:cellSolutions}
\end{figure}

\section{Computation of the permeability}
\label{sec:permeability}

The computation of the permeability tensor $\bK$ is performed by standard-means, utilizing a voxel discretization of the filter structure attained from TexMath. The common approach is actually not to solve the provided cell problems \cref{eq:cellProblemsFluid}, but to perform an approximation procedure based on Darcy's law. The computational effort for both approaches is expected to be comparable.

The methodology starts by performing three (stationary) Stokes flow simulations in a fully resolved reference cell with a prescribed constant pressure drop $\llbracket p_i\rrbracket \in\R,i=1,2,3$ along the main axes, respectively. For the remaining boundaries, periodic boundary conditions are applied.

In a next step, from the attained solutions $(\bv_i,p_i)$, the average velocities $\hat{\bv}_i \in\R^3, i=1,2,3$ are computed. Afterwards, by approximating the pressure gradient by the finite difference
\begin{equation*}
	\nabla p_i \approx \frac{\llbracket p_i\rrbracket}{L_i} \be_i \in \R^3
\end{equation*}
with $L_i$ denoting the respective physical length of the structure in $x_i$-direction, one can approximate $\bK$ by the solution of the system of linear equations
\begin{equation*}
	\hat{\bv}_i = -\frac{\llbracket p_i\rrbracket}{L_i \mu} \bK \be_i, \quad i=1,2,3
\end{equation*}
under the assumption that Darcy's law applies.
In the considered case, $L_3=\delta$ is the characteristic thickness of the filter and the remaining lengths are given by the period $\e$.

By linearity, the computed tensor $\bK$ is independent of the choice of $\llbracket p_i\rrbracket$, as well as $\mu$. Exemplary flow solutions $(\bv_i,p_i)$ for a twill woven filter with $\llbracket p_i\rrbracket = \SI{1}{\pascal}$ and $\mu=\SI{1e-3}{\pascal\second}$ are presented in Figure \ref{fig:examplePermeabilityFlow}. They are attained utilizing the LIR-Stokes solver of the software GeoDict.

\begin{figure}[htbp]
	\centering
	\includegraphics[width=0.8\textwidth]{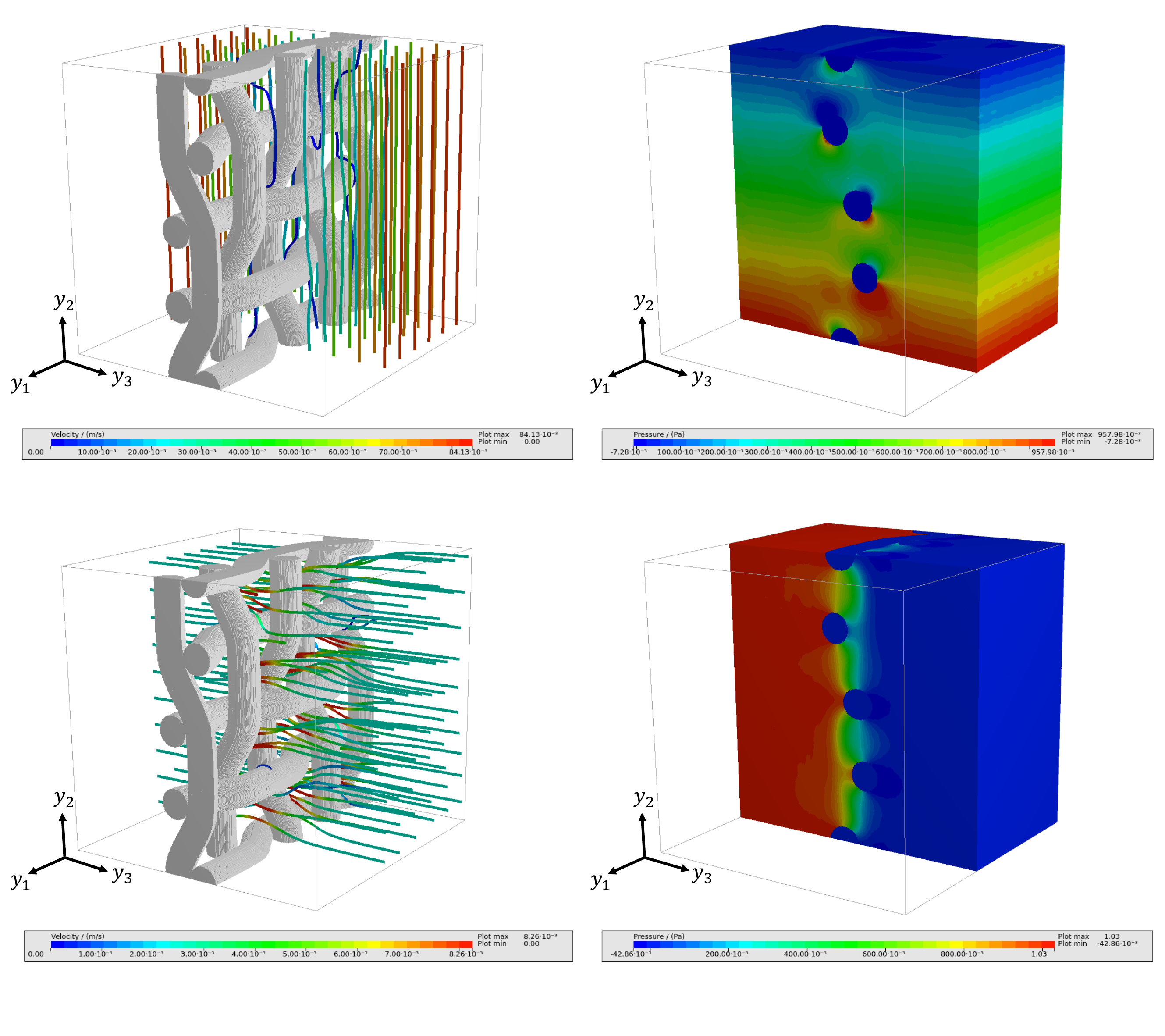}
	\caption{Flow solutions $(\bv_2,p_2)$ (top) and $(\bv_3,p_3)$ (bottom) for a twill woven filter. The remaining flow solution is of similar nature due to symmetry of the structure.}
	\label{fig:examplePermeabilityFlow}
\end{figure}

\section{Monolithic FSI solver}
\label{sec:FSISolver}

The numerical method to solve the FSI system \cref{eq:system_macro} is split into two phases. In the first phase, the macroscopic model parameters $\aahom,\bbhom,\cchom$, as well as $\bK$, are computed utilizing the microscopic routines from the previous two sections. Afterwards, a monolithic finite element discretization of \cref{eq:system_macro} is employed, that is fluid and structure equations are solved as a single discrete system.

For the derivation of the FE system, the auxiliary variable $w_3=\partial_t u_3$ for the plate's normal velocity is introduced. Furthermore, the required space-variable dependent function space is denoted by
\begin{equation*}
	\cY = \cV \times L^2(\Opm) \times H_0(\Sigma)^2 \times H_0^2(\Sigma) \times L^2(\Sigma).
\end{equation*}
With this notation, Rothe's method is employed for the semi-discretization of system \cref{eq:system_macro} in time.
For this purpose, let $\bdt[n+1]=t^{n+1}-t^n$ with discrete time steps $0=t^0<t^1<\dots<t^N=T$ for some $N\geq 1$. The approximation of partial derivatives in time is performed via backwards difference quotients
\begin{displaymath}
	\partial_t u(t^{n+1}) \approx \frac{u^{n+1} - u^{n}}{\bdt[n+1]}
\end{displaymath}
for purely space dependent functions $\by^n=(\bv^{n},p^n,\baru^{n},u_3^{n},w_3^{n})\in\cY$, approximating the solution at time $t^n$. For $n=0$, the approximation is given by the initial data.

For the right-hand side functions, the semi-discretization in time reads
\begin{equation*}
	\bm{f}^{n+1} = \frac{1}{\bdt[n+1]}\int_{t^n}^{t^{n+1}} \bm{f}(\tau)\dx[\tau], \quad
	g_3^{n+1} = \frac{1}{\bdt[n+1]}\int_{t^n}^{t^{n+1}} g_3(\tau)\dx[\tau].
\end{equation*}

Standard procedure delivers the variational formulation.

\begin{lemma}
	The variational formulation of the semi-discretized system \cref{eq:system_macro} for step $n+1$ consists of finding $\by^{n+1} = (\bv^{n+1},p^{n+1},\baru^{n+1},u_3^{n+1},w_3^{n+1})\in\mathcal{Y}$ such that
	\begin{equation}\label{eq:numerics_semiDiscrete}
		\begin{aligned}
			&\frac{\rho_f}{\bdt[n+1]} (\bv^{n+1}, \bV)_{\Opm}
			+ 2\mu (D(\bv^{n+1}), D(\bV))_{\Opm} 
			- (p^{n+1}, \nabla\cdot \bV)_{\Opm} \\
			&\quad
			+ (\hk^{-1} \bv^{n+1}, \bV)_\Sigma 
			- \frac{1}{\bdt[n+1]} ( \hk^{-1} u_3^{n+1}\be_3, \bV)_\Sigma \\
			&= (\bm{f}^{n+1}, \bV)_{\Opm} 
			+ \frac{\rho_f}{\bdt[n+1]} (\bv^n, \bV)_{\Opm}
			- \frac{1}{\bdt[n+1]} (\hk^{-1} u_3^n\be_3, \bV)_\Sigma, \\
			&-(\nabla\cdot \bv^{n+1}, P)_{\Opm}
			= 0, \\
			&\frac{\hrho}{\bdt[n+1]}(w_3^{n+1},U_3)_\Sigma
			+ \ahom( (\baru^{n+1},u_3^{n+1}), (\barU,U_3)) 
			- (\hk^{-1} \bv^{n+1}, U_3\be_3)_\Sigma \\
			&\quad
			+ \frac{1}{\bdt[n+1]} (\hk^{-1} u_3^{n+1}\be_3, U_3\be_3)_\Sigma \\
			&= 
			(g_3^{n+1}, U_3)_\Sigma
			+ \frac{\hrho}{\bdt[n+1]}(w_3^{n},U_3)_\Sigma
			+ \frac{1}{\bdt[n+1]} (\hk^{-1} u_3^n \be_3, U_3 \be_3)_\Sigma, \\
			&\hrho(w_3^{n+1},W_3)_\Sigma 
			- \frac{\hrho}{\bdt[n+1]}(u_3^{n+1},W_3)_\Sigma 
			= 
			-\frac{\hrho}{\bdt[n+1]} (u_3^n, W_3)_\Sigma
		\end{aligned}
	\end{equation}
	for all $(\bV,P,\barU,U_3,W_3)\in \mathcal{Y}$.
\end{lemma}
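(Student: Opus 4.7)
The plan is to derive the weak formulation by the usual recipe: apply the backward-difference time discretization to every equation of \cref{eq:system_macro}, test each resulting spatial PDE against a component of a generic element $(\bV,P,\barU,U_3,W_3)\in\cY$, integrate by parts, and eliminate the stress jump across $\Sigma$ using the Darcy interface condition.

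First I would treat the momentum and incompressibility equations. Replacing $\partial_t\bv^\pm$ by $(\bv^{n+1}-\bv^n)/\bdt[n+1]$, multiplying the first equation of \cref{eq:system_macro} by $\bV\in\cV$ and integrating over $\Opm$, an integration by parts of the divergence of $2\mu D(\bv^{n+1})-p^{n+1}\bI$ produces boundary integrals on $\partial^\text{in}\O\cup\partial^\text{no-slip}\O$ (which vanish because $\bV=\bm 0$ there), on $\partial^\text{out}\O$ (which vanishes by the zero-stress condition), and on $\Sigma$ on each side. Collecting the two contributions on $\Sigma$ with outward normals $\pm\be_3$ yields, with the standard $\Sigma$-continuity of $\bV$, exactly the jump $\llbracket 2\mu D(\bv)-p\bI\rrbracket\be_3$ tested against $\bV$; substituting the Darcy interface identity
\[
\llbracket 2\mu D(\bv)-p\bI\rrbracket\be_3 \;=\; \hk^{-1}(\bv^{n+1}-w_3^{n+1}\be_3)
\]
and then eliminating $w_3^{n+1}=(u_3^{n+1}-u_3^n)/\bdt[n+1]$ produces the four $\hk^{-1}$-terms that appear in the first equation of \cref{eq:numerics_semiDiscrete}. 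Testing $\nabla\cdot\bv^{n+1}=0$ against $-P$ gives the incompressibility line.

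Next I would treat the plate block. The in-plane equation is time-independent and, tested against $\barU\in H_0^1(\Sigma)^2$ with two integrations by parts (boundary terms vanishing because $\barU=\bm 0$), contributes the bilinear-form pieces $(\aahom D_\barx\baru^{n+1},D_\barx\barU)_\Sigma+(\bbhom\nabla_\barx^2 u_3^{n+1},D_\barx\barU)_\Sigma$; these assemble with the deflection terms into the form $\ahom(\cdot,\cdot)$ of \Cref{proposition:stiffnessTensorsCoercive}. For the deflection equation I replace $\partial_{tt}u_3(t^{n+1})$ by $(w_3^{n+1}-w_3^n)/\bdt[n+1]$ using the auxiliary identity, test with $U_3\in H_0^2(\Sigma)$, and perform the two successive integrations by parts on the tensor term (again boundary contributions vanish by clamping). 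The right-hand side jump term is handled again via the Darcy relation, yielding the two $\hk^{-1}$-contributions tested against $U_3\be_3$ in the third line of \cref{eq:numerics_semiDiscrete}, and producing the $u_3^n$-term on the right-hand side. Finally, testing $w_3^{n+1}=(u_3^{n+1}-u_3^n)/\bdt[n+1]$ against $W_3\in L^2(\Sigma)$ and multiplying by $\hrho$ gives the last line.

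The only technical point that I expect to require care is the consistent bookkeeping on $\Sigma$: the same discrete quantity $\hk^{-1}(\bv^{n+1}-w_3^{n+1}\be_3)$ enters both the fluid equation (as the stress jump picked up during integration by parts) and the plate-deflection equation (as the loading $\llbracket 2\mu D(\bv)-p\bI\rrbracket\be_3\cdot\be_3$), and in both places the elimination of $w_3^{n+1}$ via $w_3^{n+1}=(u_3^{n+1}-u_3^n)/\bdt[n+1]$ must be carried out before the equations are combined, so that the unknowns and the explicit data from step $n$ are correctly separated. Once this accounting is done, collecting the four tested identities gives exactly \cref{eq:numerics_semiDiscrete} for every $(\bV,P,\barU,U_3,W_3)\in\cY$; conversely any $\by^{n+1}\in\cY$ satisfying \cref{eq:numerics_semiDiscrete} recovers the semi-discrete strong form by reversing the integrations by parts and choosing localized test functions, so the two formulations are equivalent.
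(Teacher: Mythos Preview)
Your derivation is correct and is precisely the ``standard procedure'' the paper invokes without spelling out: the paper offers no proof of this lemma beyond that one-line remark, so your integration-by-parts derivation with substitution of the Darcy interface identity is exactly what is intended. One minor remark on bookkeeping: in the semi-discrete system $w_3^{n+1}$ is an independent unknown and the identity $w_3^{n+1}=(u_3^{n+1}-u_3^n)/\bdt[n+1]$ holds only weakly through the last equation, so it is cleaner to discretize $\partial_t u_3$ in the Darcy condition directly by the backward difference quotient rather than first writing $w_3^{n+1}$ and then substituting; the outcome is of course identical.
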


Utilizing \Cref{proposition:stiffnessTensorsCoercive} and \Cref{proposition:cellProblemsFluid}, the well-posedness of \cref{eq:numerics_semiDiscrete} follows with the classical LBB theorem.

\begin{theorem}
	For all $n=0,\dots,N-1$, the semi-discrete system \cref{eq:numerics_semiDiscrete} has a unique solution $\by^{n+1}$.
\end{theorem}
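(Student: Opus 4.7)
The plan is to reduce the system to a saddle-point problem in the primal variables $(\bv^{n+1}, \baru^{n+1}, u_3^{n+1})$ with Lagrange multiplier $p^{n+1}$, and then apply the classical LBB (Ladyzhenskaya-Babu\v{s}ka-Brezzi) theorem on $\cV \times L^2(\Opm) \times H^1_0(\Sigma)^2 \times H^2_0(\Sigma)$.

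\textbf{Step 1: Elimination of $w_3^{n+1}$.} The fourth equation in \cref{eq:numerics_semiDiscrete} is purely algebraic in $w_3^{n+1}$ and uniquely determines
\begin{equation*}
w_3^{n+1} = \frac{u_3^{n+1} - u_3^n}{\bdt[n+1]}.
\end{equation*}
Substituting this into the third equation produces an equation in $(\baru^{n+1}, u_3^{n+1})$ only, in which the inertial term becomes $\frac{\hrho}{(\bdt[n+1])^2}(u_3^{n+1}, U_3)_\Sigma$ with known right-hand side contributions lumped into the data. The remaining problem is a linear saddle-point problem with continuous bilinear form $a$ acting on $(\bv, \baru, u_3)$ and constraint form $b(\bv, P) = -(\nabla\cdot \bv, P)_{\Opm}$.

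\textbf{Step 2: Inf-sup condition for $b$.} This is the standard Stokes inf-sup estimate for $\cV \times L^2(\Opm)$ with Dirichlet conditions on $\partial^\text{in}\O\cup\partial^\text{no-slip}\O$ and free-stress on $\partial^\text{out}\O$. Since $\O^-$ and $\O^+$ are Lipschitz and the outflow boundary allows pressure fixing on $\O^+$, the classical Bogovskii-type construction (e.g.\ Girault--Raviart) yields the inf-sup bound on each component and hence on $\Opm$.

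\textbf{Step 3: Coercivity of $a$ on $\ker b$.} The off-diagonal coupling between $\bv$ and $u_3$ through the Darcy interface is the only source of indefiniteness and is the main technical obstacle. The key observation is that after rescaling the $u_3$-equation by $1/\bdt[n+1]$ before summing the test-with-solution identities (i.e.\ testing the velocity equation with $\bV = \bv^{n+1}$ and the plate equation with $U_3 = u_3^{n+1}/\bdt[n+1]$, $\barU = \baru^{n+1}/\bdt[n+1]$), the interface cross-terms complete the square:
\begin{equation*}
(\hk^{-1}\bv, \bv)_\Sigma - \frac{2}{\bdt[n+1]}(\hk^{-1} u_3 \be_3, \bv)_\Sigma + \frac{1}{(\bdt[n+1])^2}(\hk^{-1} u_3 \be_3, u_3 \be_3)_\Sigma = \Bigl(\hk^{-1}\bigl(\bv - \tfrac{1}{\bdt[n+1]} u_3 \be_3\bigr), \bv - \tfrac{1}{\bdt[n+1]} u_3 \be_3\Bigr)_\Sigma \geq 0,
\end{equation*}
by \Cref{proposition:cellProblemsFluid}. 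The remaining diagonal contributions are
\begin{equation*}
\tfrac{\rho_f}{\bdt[n+1]} \Vert\bv\Vert_{L^2(\Opm)}^2 + 2\mu \Vert D(\bv)\Vert_{L^2(\Opm)}^2 + \tfrac{\hrho}{(\bdt[n+1])^2} \Vert u_3\Vert_{L^2(\Sigma)}^2 + \tfrac{1}{\bdt[n+1]} \Vert (\baru, u_3)\Vert_\text{hom}^2.
\end{equation*}
Korn's inequality on $\cV$ (available thanks to the Dirichlet part of $\partial \O$) combined with \Cref{proposition:stiffnessTensorsCoercive} yields coercivity of $a$ on $\cV_\text{div} \times H^1_0(\Sigma)^2 \times H^2_0(\Sigma)$.

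\textbf{Step 4: Conclusion.} Continuity of $a$ and $b$ is immediate from the $L^\infty$-bounds on $\hk^{-1}$ and the boundedness of $\ahom$ (\Cref{proposition:stiffnessTensorsCoercive}). The LBB theorem thus delivers a unique $(\bv^{n+1}, p^{n+1}, \baru^{n+1}, u_3^{n+1})$, and $w_3^{n+1}$ is then recovered from Step 1. Induction over $n = 0, \dots, N-1$ with the prescribed initial data completes the proof.
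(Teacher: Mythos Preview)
Your proof is correct and follows essentially the same route as the paper: cast the semi-discrete system as a saddle-point problem, invoke the standard Stokes inf-sup condition for the pressure, and obtain coercivity of the primal bilinear form by completing the square on the Darcy-interface cross-terms using \Cref{proposition:stiffnessTensorsCoercive} and \Cref{proposition:cellProblemsFluid}. The only differences are cosmetic---you eliminate $w_3^{n+1}$ beforehand and rescale the plate equation by $1/\bdt[n+1]$, whereas the paper keeps $w_3$ in the primal block and instead rescales the velocity test functions by $\bdt[n+1]$; both choices symmetrize the interface coupling and yield the same perfect-square structure on $\Sigma$.
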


\begin{proof}
	For easier notation, it is assumed that all arising scalar constants, apart from $[\Delta t^{n+1}]$, are equal to $1$. Furthermore, the superscript $n+1$ is omitted whenever it is clear from context.
	
	The proof is performed by induction. Let $n$ be given.
	With the assumptions above and after introducing the scaled test functions $\bdt \bV$ and $\bdt P$, system \eqref{eq:numerics_semiDiscrete} can be abstracted to
	\begin{align*}
		a(\bm{\phi},\bPhi) + b(\bPhi,p) 
		&= L[\bm{\Phi}] \\
		b(\bm{\phi},P) &= 0,
	\end{align*}
	where $\bm{\phi} = (\bv,\baru,u_3,w_3), \bPhi = (\bV,\barU,U_3,W_3)$, $L$ is the bounded linear functional 
	\begin{align*}
		L[\bPhi] &= 
		(\bm{f}^{n+1}, \bV)_{\Opm} 
		+ \frac{\rho_f}{\bdt} (\bv^n, \bV)_{\Opm}
		- \frac{1}{\bdt} (\hk^{-1} u_3^n \be_3, \bV)_\Sigma \\
		&\quad
		+ (g_3^{n+1}, U_3)_\Sigma
		+ \frac{\hrho}{\bdt}(w_3^{n},U_3)_\Sigma
		+ \frac{1}{\bdt} (\hk^{-1} u_3^n\be_3, U_3\be_3)_\Sigma \\
		&\quad
		- \frac{\hrho}{\bdt} (u_3^n, W_3)_\Sigma
	\end{align*}
	with solutions from previous time steps treated as given data and
	\begin{align*}
		a(\bm{\phi},\bm{\Phi}) 
		&=
		(\bv, \bV)_{\Opm} 
		+ \bdt (D(\bv),D(\bV))_{\Opm} 
		+ \bdt (\hk^{-1} \bv,\bV)_\Sigma \\
		& \quad
		- (\hk^{-1} u_3 \be_3, \bV)_\Sigma
		+ \bdt^{-1} (w_3, U_3)_\Sigma
		+ \ahom((\baru,u_3),(\barU,U_3))  \\
		& \quad
		- (\hk^{-1} \bv, U_3\be_3)_\Sigma 
		+ \bdt^{-1} (\hk^{-1} u_3\be_3, U_3\be_3)_\Sigma 
		+ (w_3, W_3)_\Sigma \\
		&\quad
		- \bdt^{-1}(u_3, W_3)_\Sigma, \\
		b(\bm{\phi},P) 
		&=
		-\bdt (\nabla\cdot \bv, P)_{\Opm}.
	\end{align*}
	
	Similar to classical Stokes theory, the coercivity of the bilinear form $a$ can be ensured on the entirety of $\mathcal{V}\times H_0^1(\Sigma)^2 \times H^2_0(\Sigma) \times L^2(\Sigma)$ with \Cref{proposition:stiffnessTensorsCoercive} and \Cref{proposition:cellProblemsFluid}, since
	\begin{align*}
		a(\bm{\phi},\bm{\phi}) &=
		\Vert \bv \Vert_{L^2(\Opm)} ^2
		+ \bdt\Vert D(\bv)\Vert^2_{L^2(\Opm)} \\
		&\quad
		+ \Vert \hk^{-\frac{1}{2}}( \bdt^{\frac{1}{2}} v_3 - \bdt^{-\frac{1}{2}} u_3\be_3 ) \Vert^2_{L^2(\Sigma)} \\
		&\quad
		+ \Vert (\baru, u_3) \Vert^2_\text{hom}
		+ \Vert w_3\Vert_{L^2(\Sigma)}^2,
	\end{align*}
	where $\hk^{-\frac{1}{2}}$ denotes the unique square root of $\hk^{-1}$.
	In particular, $a$ is coercive on the kernel of $b$.
	
	Again from classical Stokes theory, one can further deduce that independent of the choice of $(\baru,u_3,w_3)$, there exists a constant $\underline{c}>0$, such that for all $p\vert_{\O^\pm}$ with $p\in \mathcal{P}$ the LBB condition
	\begin{equation}\label{eq:LBBconditionExistenceProof}
		\sup_{\substack{\bv\in\mathcal{V} \\ \bv\vert_{\O^\pm} \neq \bm{0}}}
		\frac{(\nabla\cdot \bv\vert_{\O^\pm}, p\vert_{\O^\pm})_{\O^\pm}}{\Vert \bv\Vert_{H^1(\O^\pm)}}  \geq \underline{c} \Vert p\Vert_{L^2(\O^\pm)}
	\end{equation}
	for each subdomain $\O^\pm$ is fulfilled. 
	The statement then follows by inductive application of the LBB theorem.
\end{proof}

With the established existence of solutions in the semi-dicrete setting, the system \cref{eq:numerics_semiDiscrete} is further discretized with respect to the space variable. For this purpose, conforming FE are chosen, \ie one chooses finite dimensional approximation spaces
\begin{gather*}
	\cV^h \subset \cV, \quad
	\cP^h\subset L^2(\Opm), \quad
	\bar{\cU}^h\subset H_0^1(\Sigma)^2, \quad
	\cU_3^h \subset H_0^2(\Sigma), \quad
	\cW_3^h\subset L^2(\Sigma)
\end{gather*}
and sets $\cY^h=\cV^h \times \cP^h \times \bar{\cU}^h \times \cU_3^h \times \cW_3^h$. Here and in the following, $h$ denotes a characteristic element size for spatial decomposition of the computational domain $\O$. \\

In what follows, let 
\begin{equation*}
	(\{\bV^h_k\},\{P^h_k\},\{\barU^h_k\},\{{U^h_3}_k\}, \{{W^h_3}_k\})
\end{equation*}
form a basis of $\cY^h$ and
let further 
\begin{equation*}
	\by^{n,h} = (\bv^{n,h}, p^{n,h}, \baru^{n,h}, u_3^{n,h}, w_3^{n,h})^T \in \cY^h
\end{equation*} 
be an approximation of $\by^n\in \cY$. The semi-discrete solution variable $\by^{n,h}$ is associated with its DOF vector, also denoted by $\by^{n,h}$.

With this notation, $\by^{n+1,h}$ is the solution to 
\begin{equation}\label{eq:macro_fullyDiscrete}
	\left(\frac{1}{\bdt[n+1]}\bS_1 + \bS_2\right)\by^{n+1,h} = \frac{1}{\bdt[n+1]}\bS_1 \by^{n,h} + \bL(t^{n+1})
\end{equation}
with the system matrices
\begin{align*}
	\bS_1 &\coloneqq 
	\begin{pmatrix}
		\bM_{VV} & 0 & 0 & -\bR_{VU} & 0 \\
		0 & 0 & 0 & 0 & 0 \\
		0 & 0 & 0 & 0 & 0 \\
		0 & 0 & 0 & \bR_{UU} & \bM_{UW} \\
		0 & 0 & 0 & -\bM_{UW}^T & 0
	\end{pmatrix}, \\
	\bS_2 &\coloneqq
	\begin{pmatrix}
		\bA + \bR_{VV} & -\bB^T& 0 & 0 & 0 \\
		-\bB  & 0 & 0 & 0 & 0 \\
		0 & 0 & \bP_A & \bP_{B_1} & 0 \\
		-\bR_{VU}^T & 0 & \bP_{B_2} & \bP_C & 0 \\
		0 & 0 & 0 & 0 & \bM_{WW}
	\end{pmatrix}
\end{align*}
consisting of the constant block matrices
\begin{equation*}
	\begin{array}{l l l l}
		\bM_{VV} 
		&= \left( \rho_f( \bV^h_k, \bV^h_l)_{\Opm} \right)_{kl}, 
		&\bM_{UW} 
		&= \left( \hrho( {U_3}^h_k, {W_3}^h_l)_\Sigma \right)_{kl},\\
		\bM_{WW} 
		&= \left( \hrho( {W_3}^h_k, {W_3}^h_l)_\Sigma \right)_{kl},
		&\bR_{VV} 
		&= \left( (\hk^{-1} \bV^h_k  , \bV^h_l)_\Sigma \right)_{kl}, \\
		\bR_{VU} 
		&= \left( ( \hk^{-1} \bV^h_k , {U_3}^h_l\be_3)_\Sigma \right)_{kl},
		&\bR_{UU} 
		&= \left( ( \hk^{-1} {U_3}^h_k \be_3, {U_3}^h_l \be_3)_\Sigma \right)_{kl}, \\
		\bA 
		&= \left( 2\mu( D(\bV^h_k), D(\bV^h_l))_{\Opm} \right)_{kl}, 
		&\bB 
		&= \left( (P^h_k ,\nabla\cdot \bV^h_l)_{\Opm} \right)_{kl}, \\
		\bP_A 
		&= \left( ( \aahom D_\barx(\barU^h_l), D_\barx(\barU^h_k))_\Sigma \right)_{kl}, \\
		\bP_C 
		&= \left( ( \cchom \nabla^2_\barx({U^h_3}_l), \nabla^2_\barx({U^h_3}_k))_\Sigma \right)_{kl}, \\
		\bP_{B_1} 
		&= \left( ( \bbhom \nabla^2_\barx({U^h_3}_l), D_\barx(\barU^h_k))_\Sigma \right)_{kl}, \\
		\bP_{B_2} 
		&= \left( ( \bbhom D_\barx(\barU^h_l), \nabla^2_\barx({U^h_3}_k))_\Sigma \right)_{kl}
	\end{array}
\end{equation*}
and time dependent right-hand side $\bL(t)=(\bF(t),0,0,\bG_3(t),0)^T$ with blocks
\begin{equation*}
	\bF(t) = \left( ( \bm{f}(t), \bV^h_k)_{\Opm} \right)_{k}, \quad
	\bG_3(t) = \left( -( g_3(t), {U^h_3}_k)_\Sigma \right)_{k}.
\end{equation*}
Note that in general one has $\bP_{B_1}\neq \bP_{B_2}^T$. \\

For the stationary case, the fully discrete formulation consists of the two linear systems
\begin{equation*}
	\begin{pmatrix}
		\bA + \bR_{VV} & -\bB^T  \\
		-\bB  & 0
	\end{pmatrix} 
	\begin{pmatrix}
		\bv^h \\
		p^h
	\end{pmatrix}
	=
	\begin{pmatrix}
		\bm{F} \\
		0
	\end{pmatrix},
	\quad
	\begin{pmatrix}
		\bP_A & \bP_{B_1}  \\
		\bP_{B_2} & \bP_C
	\end{pmatrix} 
	\begin{pmatrix}
		\baru^h \\
		u_3^h
	\end{pmatrix}
	=
	\begin{pmatrix}
		0 \\
		\bG_3 + \bR_{VU}^T \bv^h
	\end{pmatrix},
\end{equation*}
which can be solved in sequential order. \\

For the choice of specific finite element spaces, a spatial decomposition of $\O$ using a regular hexahedral mesh is proposed. The mesh is chosen as $\Sigma$-conforming in the sense that its restriction to the interface $\Sigma$ is a quadrilateral 2D mesh given by the element facets. A reformulation with a tetrahedral decomposition is straightforward.

In what follows, the hexahedral elements are denoted by $T\in\cT^h$, while the facets of $\Sigma$ are denoted by $F\in\cF^h$. \\

For the velocity variables, the classical $\cQ_2/\cQ_1$ Taylor-Hood pairing is chosen. Since $\bv$ is continuous on $\Sigma$, while $p$ has a jump discontinuity on $\Sigma$, the respective finite element spaces read
\begin{displaymath}
	\begin{aligned}
		\cV^h &= 
		\{\bv^h \in C^0(\O)^3 : v_i^h \vert_T \in \cQ_{k+1} \text{ for all }T\in\cT^h, i=1,2,3\} \cap \cV, \\
		\cP^h &= 
		\{p^h : p^h\vert_{\O^\pm} \in C^0(\O^\pm), p^h \vert_T \in \cQ_k \text{ for all }T\in\cT^h\}.	
	\end{aligned}
\end{displaymath}
The resulting pressure mesh has a fissure on $\Sigma$, with each mesh node on $\Sigma$ being associated with two pressure DOF, respectively. The authors in \cite{bloodFlowPorousInterface} additionally performed comparative studies for a similar stationary Stokes-Stokes problem with globally continuous pressure space. Unsurprisingly, this choice leads to poor results unless the discretization size is sufficiently small around $\Sigma$. \\

For the plate's in-plane displacement, $\cQ_1$ interpolation in 2D is employed. The $H^2$-conformity of the deflection $u_3$ requires continuous first-order derivatives of the FE across edges in the mesh, \ie $C^1$-elements. For quadrilateral meshes, the employment of Bogner-Fox-Schmit (BFS) elements (see \cite{BFS,Ciarlet:FEM}) is proposed, which are bicubic polynomials that are comparatively easy to self-implement. We note, that for unstable lattice structures (see \cite{GKOS2}), the mixed derivatives are unknown. Recent work \cite{riccardo:latticeStructures} offers even more effective $Q_3$ or be-cubic interpolation, avoiding mixed derivatives.\\ The corresponding FE spaces are
\begin{displaymath}
	\begin{aligned}
		\bar{\cU}^h &= 
		\{\baru^h \in C^0(\Sigma)^2 : \bar{u}^h_i\vert_F \in \cQ_1 \text{ for all }F\in\cF^h,i=1,2 \}\cap \bar{\cU}, \\
		\cU_3^h &= 
		\{u_3^h\in C^1(\Sigma) : u_3^h\vert_F \in \cQ_3 \text{ for all }F\in \cF^h\} \cap \cU_3.
	\end{aligned}
\end{displaymath}

A standard basis of the BFS elements in $\R^n$ is attained from tensor products of classical 1D Hermite splines. On the unit interval $[0,1]$, the latter read
\begin{alignat*}{2}
	&\hat{H}_{00}(x) = (2x + 1) (x-1)^2, \quad && \hat{H}_{10}(x) = x(x-1)^2, \\
	&\hat{H}_{01}(x) = x^2(3-2x), \quad && \hat{H}_{11}(x) = x^2(x-1),
\end{alignat*}
which generalizes to arbitrary intervals $[x^0,x^1]$ with length $L=x^1-x^0$ by the affine change of variables
\begin{alignat*}{2}
	&H_{00}(x) = \hat{H}_{00}\left(\frac{x-x^0}{L}\right), 
	\quad && H_{10}(x) = L\hat{H}_{10}\left(\frac{x-x^0}{L}\right), \\
	&H_{01}(x) = \hat{H}_{01}\left(\frac{x-x^0}{L}\right), 
	\quad && H_{11}(x) = L\hat{H}_{11}\left(\frac{x-x^0}{L}\right).
\end{alignat*}
A sketch of the 1D splines on the unit interval is shown in Figure \ref{fig:hermiteShapeFunctions}. The corresponding 1D FE are commonly referred to as \textit{Hermite} elements, see \cite{Ciarlet:Hermite}.

\graphicspath{{Images/FEM/}}

\begin{figure}[th]
	\centering
	\includegraphics[width=0.35\textwidth]{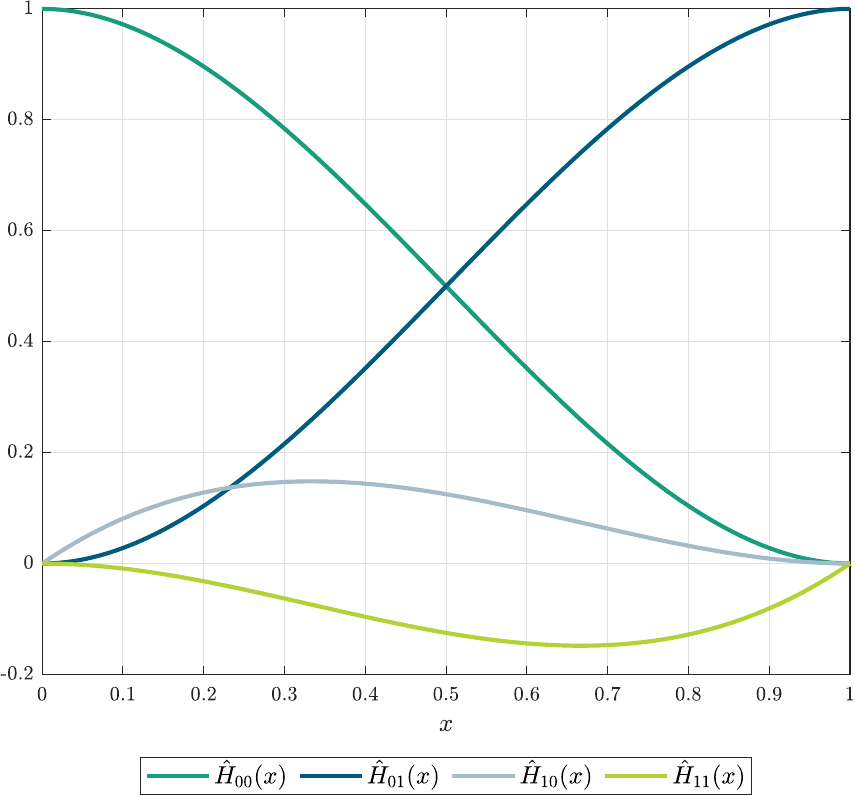}
	\caption[Hermite shape functions]{Hermite splines on unit interval.}
	\label{fig:hermiteShapeFunctions}
\end{figure}

\newcommand{\balpha}{\bm{\alpha}}
\newcommand{\bbeta}{\bm{\beta}}

The BFS basis polynomials are attained by computing tensor products of the Hermite splines. Let $[x^0_1,x^1_1]\times \dots \times [x^0_n,x^1_n]$ denote an arbitrary cuboid in $\R^n$ with edge lengths $L_i = x^1_i - x^0_i$. We define the $4^n$ basis polynomials as
\begin{equation*}
B_{\balpha ,\bbeta }(x) \coloneqq 
\prod_{i=1}^n 
L_i^{\alpha_i} \hat{H}_{\alpha_i \beta_i}
\left(\frac{x_i-x^0_i}{L_i}\right), 
\quad \balpha,\bbeta\in \{0,1\}^n
\end{equation*}
which for the unit cube $[0,1]^n$ results in the reference functions
\begin{equation*}
\hat{B}_{\balpha ,\bbeta}(x_1,x_2) \coloneqq 
\prod_{i=1}^n 
\hat{H}_{\alpha_i \beta_i}
(x_i), 
\quad \balpha,\bbeta\in \{0,1\}^n.
\end{equation*}

We can derive that the interpolant of $w\in C^1([x^0_1,x^1_1]\times \dots \times [x^0_n,x^1_n])$ by the BFS polynomials reads
\begin{equation*}
\Pi_{\text{BFS}}[w](x) = \sum_{\balpha,\bbeta\in\{0,1\}^n}
\partial^{\alpha_{1}}_{x_1}\cdots\partial^{\alpha_n}_{x_n} 
w(x_1^{\beta_1},\dots,x_n^{\beta_n}) B_{\balpha,\bbeta}(x).
\end{equation*}
Therefore, we can associate the $4^n$ nodal DOF $\partial^{\alpha_{1}}_{x_1}\cdots\partial^{\alpha_n}_{x_n} 
w(x_1^{\beta_1},\dots,x_n^{\beta_n})$ per element. 

For the specific choice of $n=2$, for each mesh node on $\Sigma$, we attain the deflections value, the value of its two first-order derivatives as well as the value of the mixed second-order derivative. In Figure \ref{fig:bfsShapeFunctions}, four of the sixteen derived shape functions for $n=2$ are plotted that are associated with the point $(1,0)$. The remaining functions are of similar nature.

\begin{figure}[H]
	\centering
	\includegraphics[width=0.35\textwidth]{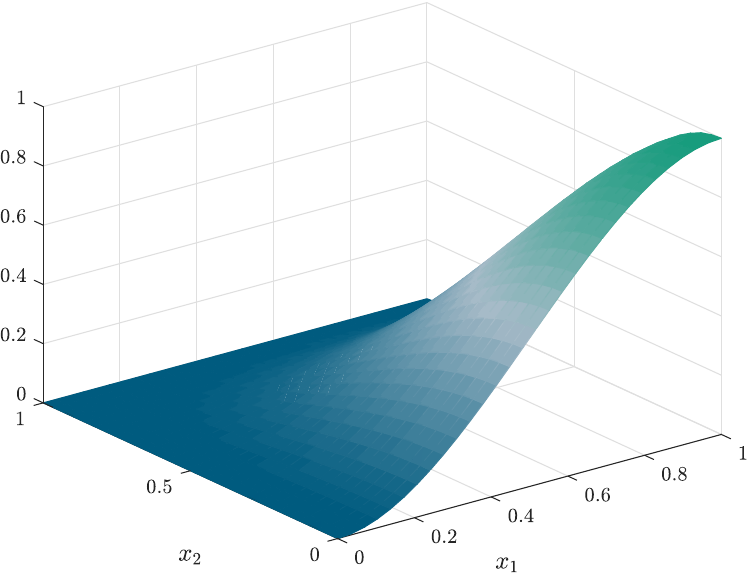}\hspace{0.4cm}
	\includegraphics[width=0.35\textwidth]{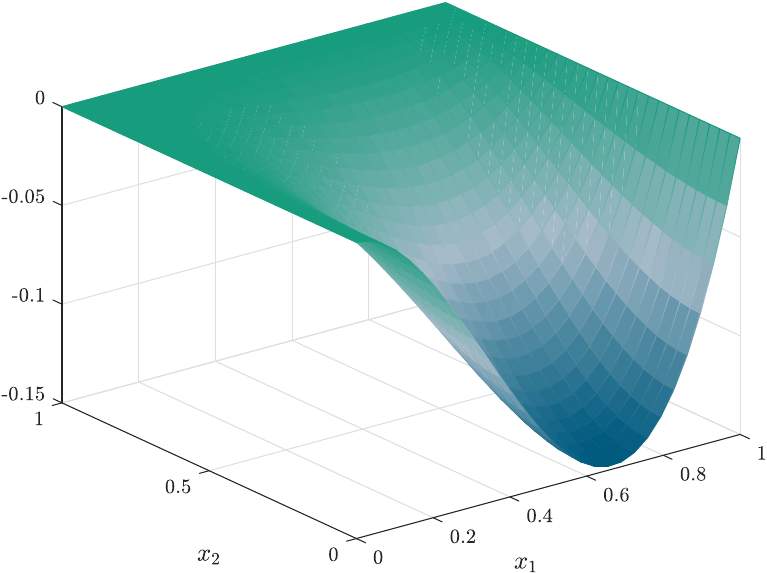}
	\vspace{0.5cm}
	
	\includegraphics[width=0.35\textwidth]{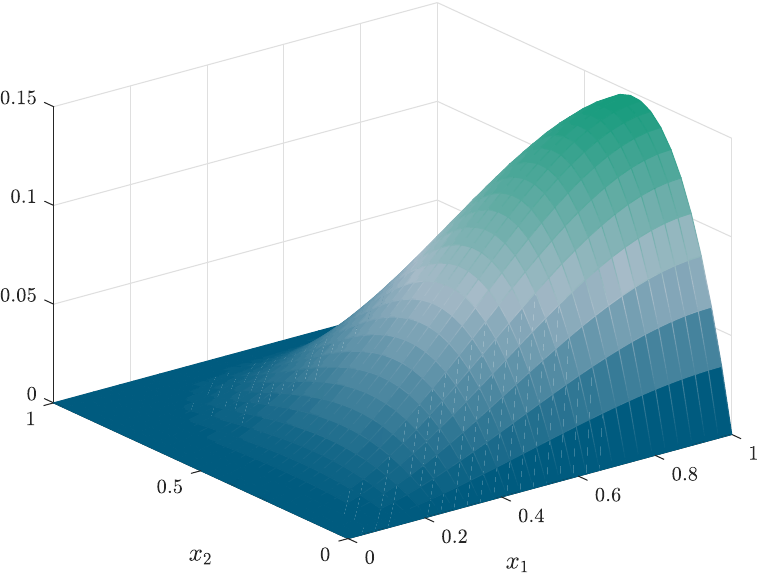}\hspace{0.4cm}
	\includegraphics[width=0.35\textwidth]{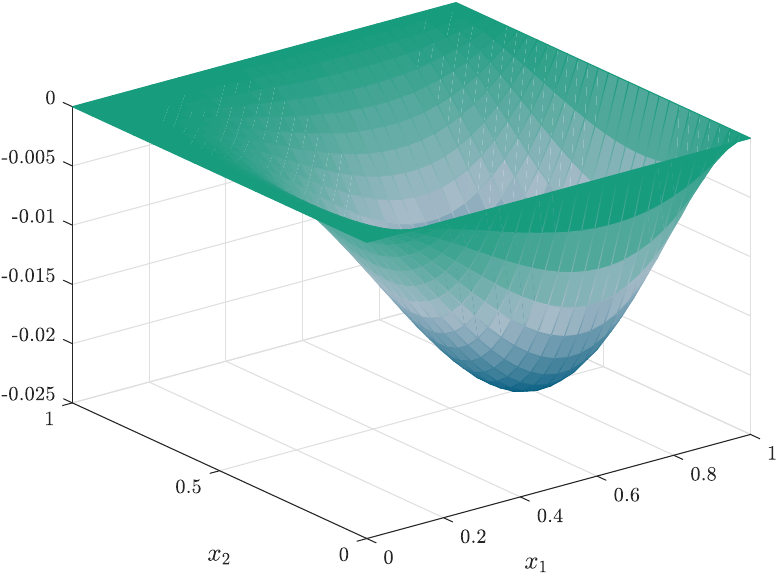}
	\caption[Bogner-Fox-Schmit shape functions]{The four BFS shape functions $\hat{B}_{(0,0),(1,0)}, \hat{B}_{(1,0),(1,0)}, \hat{B}_{(0,1),(1,0)}, \hat{B}_{(1,1),(1,0)}$ on the unit square.}
	\label{fig:bfsShapeFunctions}
\end{figure}

The resulting amount of DOF per respective element for the chosen spatial discretization is summarized in Table \ref{table:summaryFE}.
\begin{table}[H]
	\centering
	\begin{tabular}{l|l|l|l|l}
		Function & $\bv^h$ & $p^h$ & $\baru^h$ & $u_3^h$ \\ \hline
		FE type & $\cQ_2$ & $\cQ_1$ & $\cQ_1$ & BFS \\ \hline
		DOF per element & 27 & 8 & 4 & 16
	\end{tabular}
	\caption[Summary of employed FE]{Summary of employed FE.}
	\label{table:summaryFE}
\end{table}

We expect the following errors for the employed spatial FE under idealized time-stepping, see \cite{GiraultVivette:FEStokes, Ciarlet:FEM}. 
\begin{proposition}\label{proposition:errorEstimates}
	Let $n\in\{1,\dots,N\}$ be given and assume that the right-hand side of \eqref{eq:numerics_semiDiscrete} is given by the exact solution.
	Then, for sufficiently regular solutions, the chosen spatial interpolation methods provide the a priori error estimates
	\begin{align*}
	&\Vert \bv^n - \bv^{n,h} \Vert_{H^1(\Opm)} + 
	\Vert p^n - p^{n,h}\Vert_{L^2(\Opm)} \\
	&\quad\quad\quad\quad\leq 
	\overline{c}_1 h^2 \left(\vert \bv^n \vert_{H^3(\Opm)} + \vert p^n\vert_{H^2(\Opm)} \right), \\
	&\Vert \bv^n - \bv^{n,h} \Vert_{L^2(\Opm)} \\
	&\quad\quad\quad\quad\leq 
	\overline{c}_1 h^3 \left(\vert \bv^n \vert_{H^3(\Opm)} + \vert p^n\vert_{H^2(\Opm)} \right)
	\end{align*}
	for the fluid variables for some constants $\overline{c}_1>0$ independent of $h$.
	
	Further, the error for the displacement variables satisfies the elliptic estimates
	\begin{align*}
	\Vert \baru^n - \baru^{n,h} \Vert_{H^{1-k}(\Sigma)} 
	&\leq \overline{c}_2 h^{1+k}\vert \baru^n \vert_{H^2(\Sigma)}, \\
	\Vert u_3^n - u_3^{n,h} \Vert_{H^{2-m}(\Sigma)} 
	&\leq \overline{c}_2 h^{2+m}\vert u_3^n \vert_{H^{4}(\Sigma)},
	\end{align*}
	for $k=0,1$ and $m=0,1,2$ and a constant $\overline{c}_2>0$ independent of $h$. Here $\Vert\cdot\Vert_{H^0(\Sigma)}$ corresponds to the $L^2$-norm.
\end{proposition}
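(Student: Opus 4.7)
The plan is to exploit the observation that when the right-hand side of \eqref{eq:numerics_semiDiscrete} is given by the exact solution, the discrete problem becomes a standard Galerkin approximation of an elliptic saddle-point system (for the fluid block) and a coercive elliptic problem (for the plate block), both of which are covered by classical FEM theory once the finite element choices are checked to satisfy the requisite stability and interpolation hypotheses. I would first decouple the analysis: the semi-discrete matrix structure shows that under the stated assumption the Stokes block and the plate block may be analyzed independently, with the coupling terms relegated to the right-hand side data (they do not affect the local error of the elliptic projection, only its constants).

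For the fluid estimates I would invoke the standard Taylor-Hood theory in \cite{GiraultVivette:FEStokes}. The $\cQ_2/\cQ_1$ pair is inf-sup stable on each subdomain $\O^\pm$ (the discrete LBB constant being uniform in $h$ on a regular hexahedral mesh), and the bilinear form $a+\bR_{VV}$ is coercive on $\cV$ by \Cref{proposition:cellProblemsFluid}; hence Brezzi's theorem yields the abstract Céa-type bound $\Vert \bv-\bv^h\Vert_{H^1}+\Vert p-p^h\Vert_{L^2}\leq C\,(\inf_{\bV^h}\Vert\bv-\bV^h\Vert_{H^1}+\inf_{P^h}\Vert p-P^h\Vert_{L^2})$. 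Plugging in the standard polynomial interpolation estimates $\Vert\bv-\Pi^h\bv\Vert_{H^1}\lesssim h^2\vert\bv\vert_{H^3}$ and $\Vert p-\pi^h p\Vert_{L^2}\lesssim h^2\vert p\vert_{H^2}$ (the latter respecting the jump discontinuity on $\Sigma$, which is why $\cP^h$ is taken discontinuous across $\Sigma$) gives the first claim. The improved $L^2$-rate for $\bv$ follows by the Aubin-Nitsche duality argument applied to a dual Stokes problem posed on $\O^-\cup\O^+$ with boundary conditions compatible with $\cV$, using $H^2$-regularity of the dual solution.

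For the plate, observe that \Cref{proposition:stiffnessTensorsCoercive} gives that the form $\ahom(\cdot,\cdot)$ is continuous and coercive on $H^1_0(\Sigma)^2\times H^2_0(\Sigma)$. The discrete space $\bar{\cU}^h\times \cU_3^h$ is conforming because $\cQ_1$ elements are $H^1$-conforming and the BFS bicubics are $C^1$ and hence $H^2$-conforming. Céa's lemma therefore yields energy-norm bounds by the best approximation error in the product space, and the interpolation estimates for $\cQ_1$ elements give $\Vert\baru-\baru^h\Vert_{H^1}\lesssim h\vert\baru\vert_{H^2}$ while the BFS estimates from \cite{Ciarlet:FEM} give $\Vert u_3-u_3^h\Vert_{H^2}\lesssim h^2\vert u_3\vert_{H^4}$ (bicubic polynomials reproduce $\cP_3$, so the degree count is correct). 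The lower-order norms, i.e.\ the cases $k=1$ and $m=1,2$, are recovered by the Aubin-Nitsche trick, which is applicable since the bi-Laplacian (and the $\ahom$-induced operator which is a perturbation thereof) enjoys the usual elliptic regularity shift on the rectangle $\Sigma$ with clamped boundary conditions.

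The main obstacle I anticipate is twofold. First, the coupling block $\bP_{B_1}$, $\bP_{B_2}$ in the plate system is non-symmetric (since in general $\bbhom$ only has the symmetry $b^{\text{hom}}_{ijkl}=b^{\text{hom}}_{jikl}$, not $b^{\text{hom}}_{ijkl}=b^{\text{hom}}_{klij}$), so the plate problem is a saddle-point-free but non-symmetric coercive system; invoking the Lax-Milgram/Banach-Nečas framework rather than the symmetric Céa lemma suffices, but the constants in the Aubin-Nitsche duality step require the well-posedness and $H^{2+m}$-regularity of the adjoint plate problem, which must be argued carefully for the clamped rectangle. Second, the BFS interpolation estimates are classically stated on affine-equivalent quadrilateral elements; for the assumed regular hexahedral mesh with axis-aligned facets on $\Sigma$ this is automatic, but one must verify the required $H^4$-regularity of $u_3^n$ on a rectangular domain with clamped edges—this is standard but worth citing explicitly.
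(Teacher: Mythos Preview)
Your proposal is correct and is in fact considerably more detailed than what the paper offers: the paper does not give a proof of this proposition at all, but simply prefaces it with the sentence ``We expect the following errors for the employed spatial FE under idealized time-stepping, see \cite{GiraultVivette:FEStokes, Ciarlet:FEM}'' and leaves it at that. Your sketch---decoupling the fluid and plate blocks under the exact-RHS assumption, invoking Brezzi/Taylor--Hood theory for the Stokes part and C\'ea plus BFS interpolation for the plate, then recovering the lower-order norms via Aubin--Nitsche---is precisely the content of those two references specialised to the present setting, so the approach is the same in spirit.

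The two caveats you raise (non-symmetry of the plate form when $\bbhom\neq\bm{0}$, and the regularity needed for duality on the clamped rectangle) are genuine technical points that the paper does not address; they do not invalidate the argument but would need to be checked if one wanted a fully rigorous proof rather than the ``expected'' estimates the paper claims.
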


\section{Simulation results}
\label{sec:simulationResults}

\subsection{Qualitative description of stiffness tensors}
\label{section:qualitativeDescriptionTensors}
\graphicspath{{Images/Homogenization/}}

In the following section, we qualitatively describe the influence of entries in the homogenized stiffness tensors on the overall behavior of the homogenized textile under different loading scenarios. The discussion enables the quantitative analysis of the entries in the subsequent section.

\begin{remark}
	With the knowledge about symmetry of the homogenized stiffness tensors $\aahom,\cchom$, we deduce that there are at most six independent entries per tensor,
	which we represent in a symmetric $3\times 3$ matrix of the form
	\begin{equation*}
	\aahom = 
	\begin{pmatrix}
	\ahom_{1111} & \ahom_{1122} & \ahom_{1112} \\
	\ast & \ahom_{2222} & \ahom_{2212} \\
	\ast & \ast & \ahom_{1212}
	\end{pmatrix},
	\quad
	\cchom = 
	\begin{pmatrix}
	\chom_{1111} & \chom_{1122} & \chom_{1112} \\
	\ast & \chom_{2222} & \chom_{2212} \\
	\ast & \ast & \chom_{1212}
	\end{pmatrix}.
	\end{equation*}
	With the knowledge about the reduced symmetry of the coupling stiffness tensor, we write
	\begin{equation*}
	\bbhom = 
	\begin{pmatrix}
	\bhom_{1111} & \bhom_{1122} & \bhom_{1112} \\
	\bhom_{2211} & \bhom_{2222} & \bhom_{2212} \\
	\bhom_{1211} & \bhom_{1222} & \bhom_{1212}
	\end{pmatrix}.
	\end{equation*}
\end{remark}

We start our qualitative description with a result from \cite{Panasenko:Multi-ScaleModellingOfStructures} that describes the effective properties of orthotropic plates.
\begin{lemma}\label{lemma:isotropicPlate}
	Assume that the microscopic structure is given by an orthotropic 3D plate with Young's moduli $E_1,E_2$, Poisson's ratios $\nu_{12},\nu_{21}$, shear modulus $G$ as well as a constant thickness denoted by $\delta$. Then the homogenized tensors are given by
	\begin{align*}
	\aahom &=
	\frac{\delta}{12(1-\nu_{12}\nu_{21})}
	\begin{pmatrix}
	E_1 & \nu_{21}E_1 & 0 \\
	\ast & E_2 & 0 \\
	\ast & \ast & 12(1-\nu_{12}\nu_{21}) G
	\end{pmatrix}, \\
	\cchom &=
	\frac{\delta^3}{12(1-\nu_{12}\nu_{21})}
	\begin{pmatrix}
	E_1 & \nu_{21}E_1 & 0 \\
	\ast & E_2 & 0 \\
	\ast & \ast & (1-\nu_{12}\nu_{21}) G
	\end{pmatrix}
	\end{align*}
	and $\bbhom$ vanishes.
\end{lemma}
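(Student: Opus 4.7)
My plan is to construct the cell solutions $\bchi_{ij}^{M,B}$ explicitly and substitute them into \eqref{eq:homogenizedTensors}. For a homogeneous orthotropic plate the solid part coincides with the entire cell, so $S^c = \emptyset$ and every jump term in \eqref{eq:cellProblemStructure} and \eqref{eq:homogenizedTensors} disappears. Taking $Y^s = (0,1)^2 \times (-\tfrac{1}{2}, \tfrac{1}{2})$, the six cell problems reduce to classical linear elasticity with periodicity in $(y_1, y_2)$ and natural (traction-free) conditions at $y_3 = \pm \tfrac{1}{2}$.

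Second, I seek cell solutions depending only on $y_3$, of the form $\bchi_{ij}^M = f_{ij}^M(y_3)\be_3$ and $\bchi_{ij}^B = f_{ij}^B(y_3)\be_3$. Writing $C_{ijkl}$ for the components of $\A$, the total strains become $\bM^{ij} + (f_{ij}^M)'\,\be_3\otimes\be_3$ and $-y_3\bM^{ij} + (f_{ij}^B)'\,\be_3\otimes\be_3$. Imposing the plane-stress condition $\sigma\,\be_3 = \bm{0}$ at $y_3 = \pm\tfrac{1}{2}$ fixes $(f_{ij}^M)'$ as a constant and $(f_{ij}^B)'$ as a linear function of $y_3$, both expressed through the ratios $C_{33\alpha\beta}/C_{3333}$. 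Crucially, the orthotropy of $\A$ forbids any coupling between in-plane normal strains and the shears $\sigma_{13}, \sigma_{23}$, so these vanish throughout the cell and not merely on its boundary. The stress then depends only on $y_3$ and has a vanishing $\be_3$-row, hence $\nabla\cdot \sigma = 0$ in the interior; together with $y_1, y_2$-periodicity this shows that the ansatz solves the cell problems strongly, and so also weakly.

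Third, substituting into \eqref{eq:homogenizedTensors}, the in-plane components of the resulting stress coincide with $Q^\text{ps}\bM^{ij}$, where $Q^\text{ps}_{\alpha\beta\gamma\delta} = C_{\alpha\beta\gamma\delta} - C_{\alpha\beta 33}C_{33\gamma\delta}/C_{3333}$ is the classical plane-stress reduction of $\A$. Integration over $y_3$ produces $\aahom = \delta Q^\text{ps}$ (from a $y_3$-independent integrand times the physical thickness), $\cchom = (\delta^3/12) Q^\text{ps}$ (from $\int y_3^2\,dy_3$), and $\bbhom = \bm{0}$ (from $\int y_3\,dy_3 = 0$ on a symmetric interval). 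The orthotropic plane-stress stiffness has the well-known entries $Q^\text{ps}_{1111} = E_1/(1-\nu_{12}\nu_{21})$, $Q^\text{ps}_{1122} = \nu_{21} E_1/(1-\nu_{12}\nu_{21})$, $Q^\text{ps}_{1212} = G$, with $Q^\text{ps}_{1112} = Q^\text{ps}_{2212} = 0$ by orthotropy, which matches the announced expressions.

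The main obstacle is the orthotropy argument ensuring $\sigma_{13} \equiv \sigma_{23} \equiv 0$ for the chosen ansatz; this is what promotes the boundary plane-stress condition to an interior identity $\sigma\be_3 \equiv \bm{0}$ and thereby makes the divergence-free equation automatic. Without orthotropy the ansatz $\bchi = f(y_3)\be_3$ would be incompatible with the PDE and one would have to solve a genuinely three-dimensional problem. Once the shear-decoupling is in hand, the remaining work is a one-dimensional integration and the identification of $Q^\text{ps}$ with the standard orthotropic plate constants.
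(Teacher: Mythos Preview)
The paper does not prove this lemma; it is quoted as a known result from \cite{Panasenko:Multi-ScaleModellingOfStructures} and used only to give a qualitative interpretation of the homogenized tensor entries. There is therefore no proof in the paper to compare your proposal against.

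Your argument is the standard derivation and is sound. For a homogeneous orthotropic plate the contact surface $S^c$ is empty, the cell problems \eqref{eq:cellProblemStructure} reduce to linear elasticity on the slab with in-plane periodicity and traction-free top and bottom faces, and the ansatz $\bchi_{ij}^{M,B}=f_{ij}^{M,B}(y_3)\be_3$ produces exactly the plane-stress reduction $Q^{\mathrm{ps}}_{\alpha\beta\gamma\delta}=C_{\alpha\beta\gamma\delta}-C_{\alpha\beta 33}C_{33\gamma\delta}/C_{3333}$. The orthotropy hypothesis is precisely what makes $\sigma_{13}$ and $\sigma_{23}$ vanish identically for this ansatz, promoting the boundary condition $\sigma\be_3=\bm{0}$ to an interior identity and rendering the equilibrium equation trivial; you have identified this as the crux and it is. The vanishing of $\bbhom$ then follows from the odd parity of the bending integrand in $y_3$ over a symmetric interval.

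One bookkeeping point deserves a sentence of clarification when you write the proof out: you assert that integration over $y_3$ yields the factor $\delta$ (respectively $\delta^3/12$), but the formulas \eqref{eq:homogenizedTensors} carry a prefactor $1/|Y^s|$ and the unit cell $Y$ has unit thickness. How the physical thickness $\delta$ enters depends on whether one takes $Y^s=(0,1)^2\times(-\delta/2,\delta/2)$ inside $Y$ and on the scaling conventions adopted in the cited reference; state explicitly which convention you use before performing the final integrals.
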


\begin{remark}
	We remark that due to the orthotropy constraint
	$
	\frac{E_2}{E_1} = \frac{\nu_{21}}{\nu_{12}},
	$
	we can alternatively write $\nu_{12}E_2$ in the second entries in $\aahom,\cchom$ in Lemma \ref{lemma:isotropicPlate}, respectively. 
\end{remark}

The relations in Lemma \ref{lemma:isotropicPlate}, as well as the appearance of the respective entries in the governing macroscopic plate equations, provide us with an intuitive understanding of $\aahom$ and $\cchom$. The entries 
$
\ahom_{1111} \quad\text{and}\quad \ahom_{2222}
$
determine the resistance to applied normal tensional loads, while the ratios
$
\frac{\ahom_{1122}}{\ahom_{1111}} \quad\text{and}\quad \frac{\ahom_{2211}}{\ahom_{2222}}
$
determine the transverse contraction under normal tensional loads.
%
%
\begin{figure}[H]
	\centering
	\includegraphics[trim={6cm 0cm 6cm 0cm}, clip, width=0.7\textwidth]{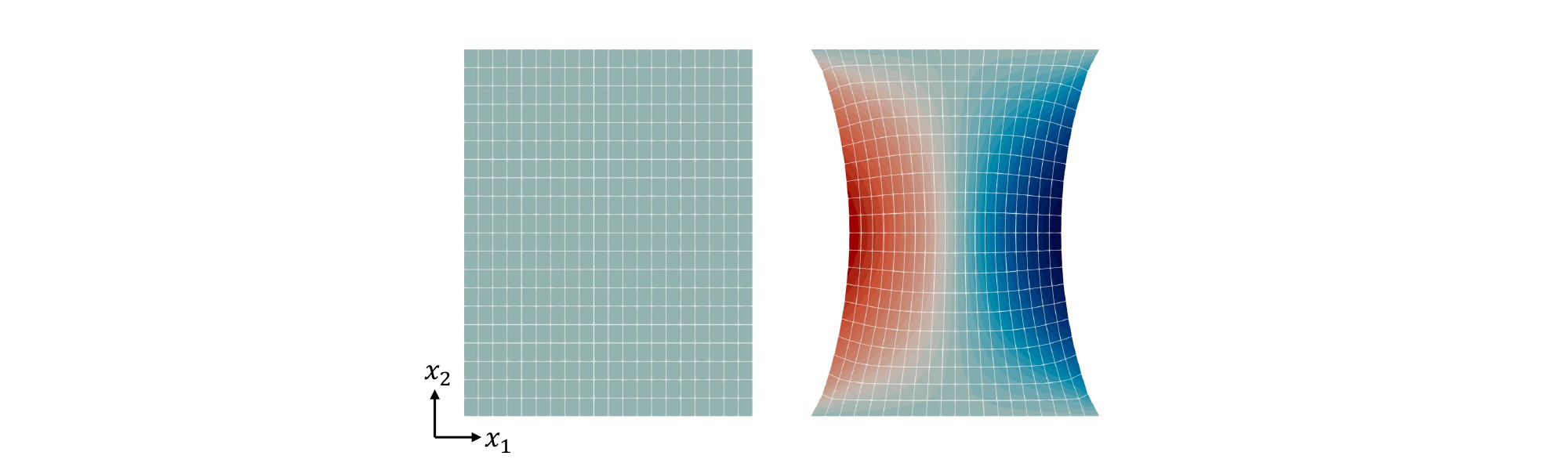}
	\caption[Example for influence of $\aahom$]{Displacement of homogenized textile under applied tension in $x_2$-direction for a zero (left) and a large, non-zero value of $\ahom_{2211}$ (right). Free lateral boundary left and right. Colors indicate displacement in $x_1$-direction.}
	\label{fig:vanishing_aHom}
\end{figure}
 A simulation scenario exemplarily showing the influence of $\ahom_{2211}$ on this Poisson effect is presented in Figure \ref{fig:vanishing_aHom} and a microscopic simulation is shown in Figure \ref{fig:poissonEffectExample}. 
\begin{figure}[H]
	\centering
	\includegraphics[trim={6cm 0cm 6cm 0cm}, clip, width=0.7\textwidth]{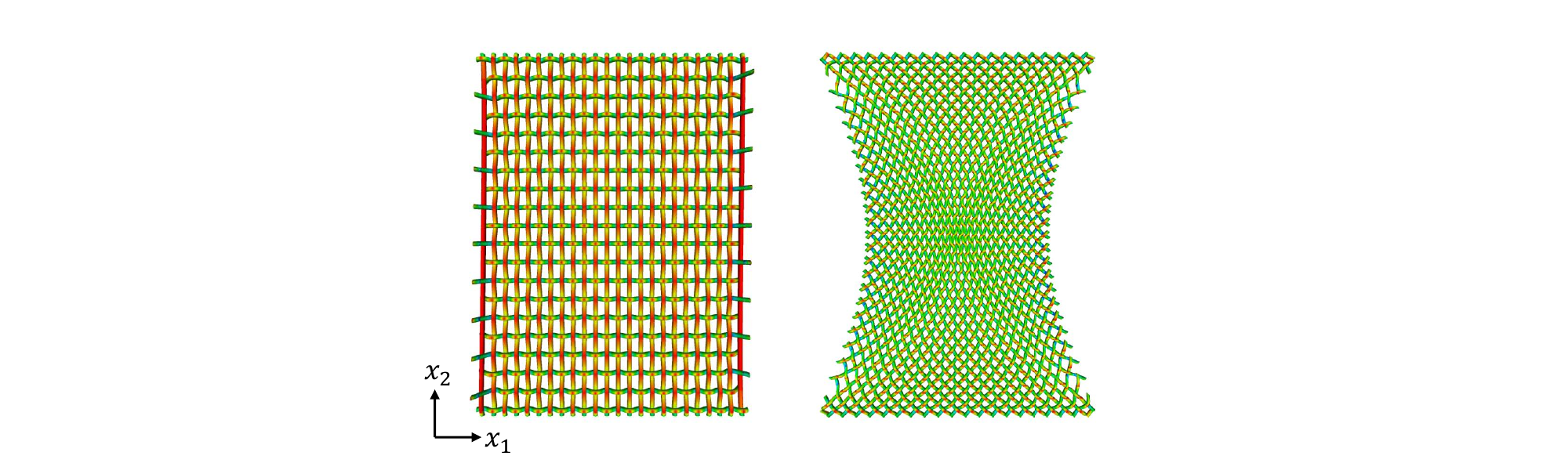}
	\caption[Example for Poisson effect]{Comparison of transverse contraction under applied tension in $x_2$-direction in microscopic simulation for filter sample 1. Yarn orientation along the Cartesian coordinates (left) and rotated by 45° (right). Free lateral boundary left and right with colors indicating local stresses.}
	\label{fig:poissonEffectExample}
\end{figure}
The tensor entry $\ahom_{1212}$ gives a measure of resistance to shearing loads, while the remaining off-diagonal entries $\ahom_{1112},\ahom_{2212}$ introduce a coupling of normal tension and shearing of the textile. An illustrative example is presented in Figure \ref{fig:vanishing_a1222}.
\begin{figure}[H]
	\centering
	\includegraphics[trim={6cm 0cm 6cm 0cm}, clip, width=0.7\textwidth]{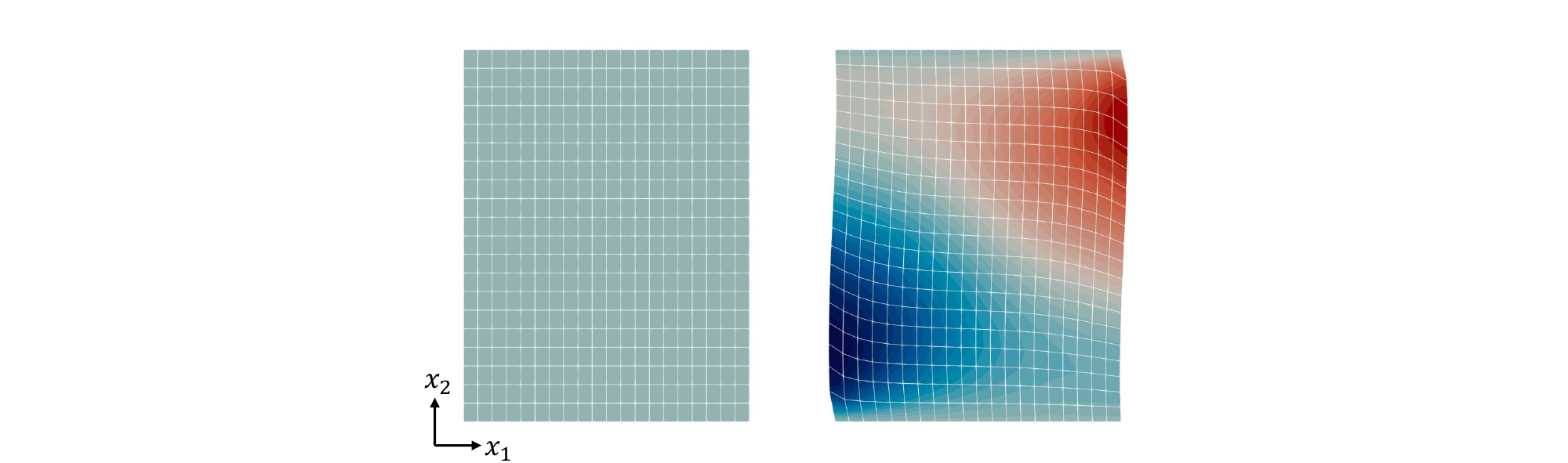}
	\caption[Second example for influence of $\aahom$]{Displacement of homogenized textile under applied tension in $x_2$-direction for a zero (left) and a large, non-zero value of $\ahom_{1222}$ (right). Free lateral boundary left and right. Colors indicate displacement in $x_1$-direction.}
	\label{fig:vanishing_a1222}
\end{figure}

The off-diagonal entries of $\aahom$ may as-well be negative. In case of $\ahom_{1122}$ being negative, one speaks of \textit{auxetic structures}, that expand in transverse direction under normal tensional loads. Changing the sign of $\ahom_{1222}$ in Figure \ref{fig:vanishing_a1222} causes a mirroring of the displacement along the $x_2$-axis.\\
%
%
Similar effective outer-plane bending properties can be formulated for the entries of $\cchom$. Qualitatively, the values 
$
\chom_{1111} \quad\text{and}\quad \chom_{2222}
$
determine the stiffness \wrt normal bending loads, commonly referred to as \textit{flexural rigidity}.\\ The ratios 
$
\frac{\chom_{1122}}{\chom_{1111}}  \quad\text{and}\quad 
\frac{\chom_{2211}}{\chom_{2222}}
$
determine the tendency of transverse bending under normal bending loads, effectively leading to saddle-point formations, \ie hyperbolic paraboloids. 
\begin{figure}[H]
	\centering
	\includegraphics[width=0.9\textwidth]{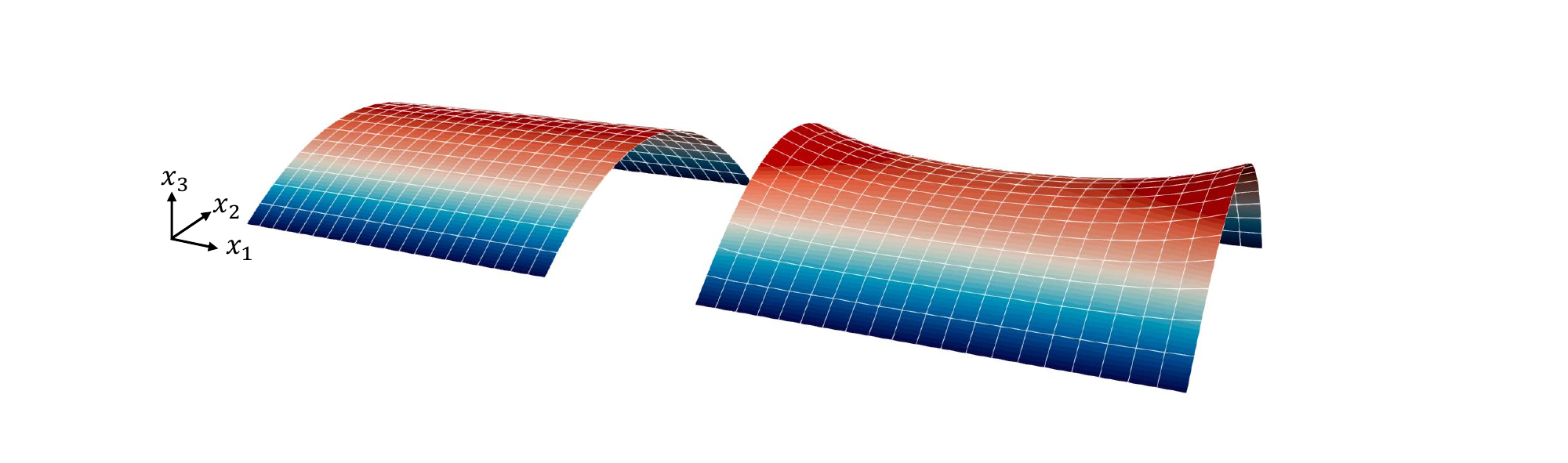}
	\caption[Example for influence of $\cchom$]{Displacement of homogenized textile under applied bending along $x_2$-direction for zero (left) and a large, non-zero value of $\chom_{2211}$ (right). Free lateral boundary left and right. Colors indicate deflection.}
	\label{fig:vanishing_cHom}
\end{figure}
The effect is depicted in Figure \ref{fig:vanishing_cHom} and for a microscopic simulation in Figure \ref{fig:saddlePointExample}. 
\begin{figure}[H]
	\centering
	\includegraphics[trim={0cm 4cm 0cm 4cm}, clip, width=0.7\textwidth]{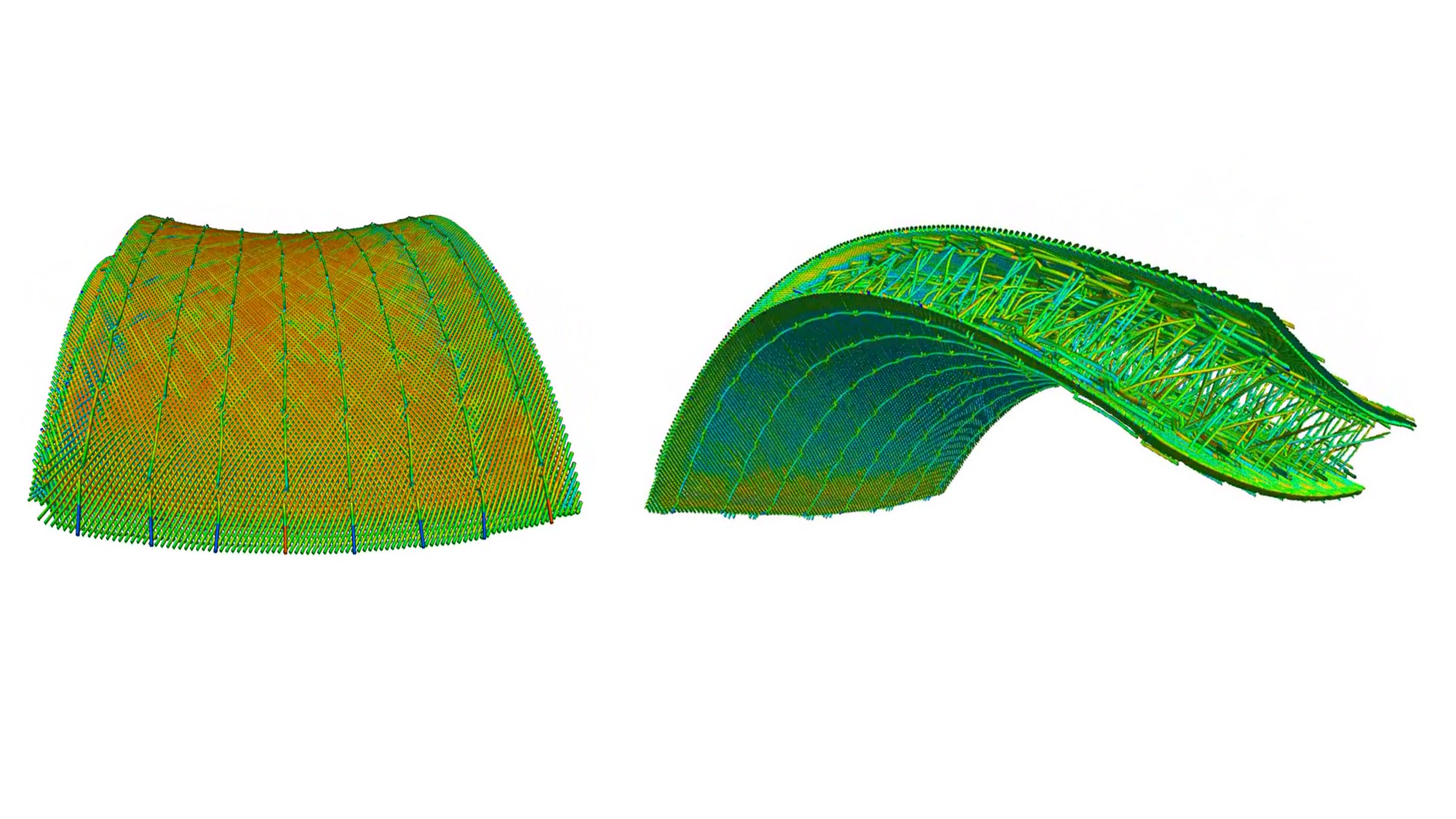}
	\caption[Example for saddle point displacement]{Example for a hyperbolic paraboloid forming under bending of a spacer fabric due to non-zero $\chom_{2212}$. Colors indicate local stresses.}
	\label{fig:saddlePointExample}
\end{figure}
Further, the entry $\chom_{1212}$ is a measure for torsional stiffness, while the remaining off-diagonal entries $\chom_{1112},\chom_{2212}$ introduce an additional coupling between bending and torsion. The coupling effect is demonstrated in Figure \ref{fig:vanishing_c1222}.
\begin{figure}[H]
	\centering
	\includegraphics[width=0.9\textwidth]{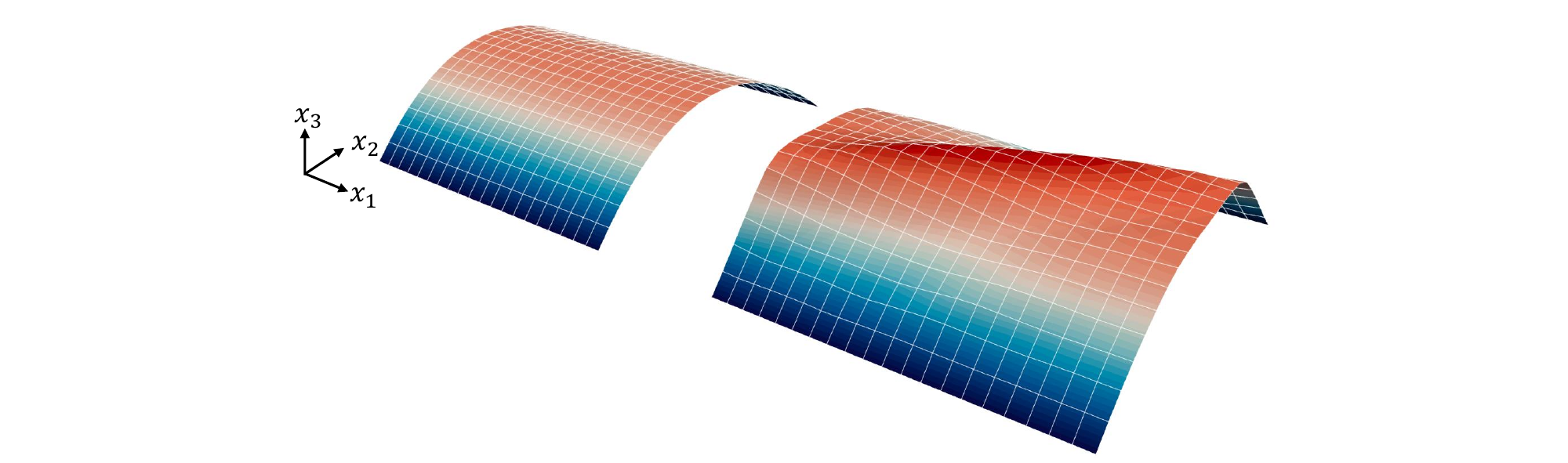}
	\caption[Second example for influence of $\cchom$]{Displacement of homogenized textile under applied bending along $x_2$-direction for zero (left) and a large, non-zero value of $\chom_{2212}$ (right). Free lateral boundary left and right. Colors indicate deflection.}
	\label{fig:vanishing_c1222}
\end{figure}
Similar to $\aahom$, changing the sign of the off-diagonal entries in $\cchom$ results in the inverted transverse bending for $\chom_{2211}$, as well as a mirroring of the displacement in Figure \ref{fig:vanishing_c1222} for $\chom_{2212}$.\\
%
%
From the governing plate equations, we can derive that each entry of $\bbhom$ couples an in-plane strain to a bending moment and vice-versa. 
For illustration, we present the influence of $\bhom_{2211}$ and $\bhom_{2222}$: We consider the case of tension applied in $x_2$-direction which translates to the displacement only in in-plane direction for the case $\bhom_{2211}=\bhom_{2222}=0$. On the other hand, additional coupling with the bending along $x_1$-direction is observed in case of a non-zero value $\bhom_{2211}$, leading to a buckling or wrinkling effect. If $\bhom_{2222}$ is non-zero, the applied strain translates into an additional bending along the $x_2$-direction. All displacements are presented in Figure \ref{fig:vanishing_bHom}. 
\begin{figure}[H]
	\centering
	\includegraphics[width=0.8\textwidth]{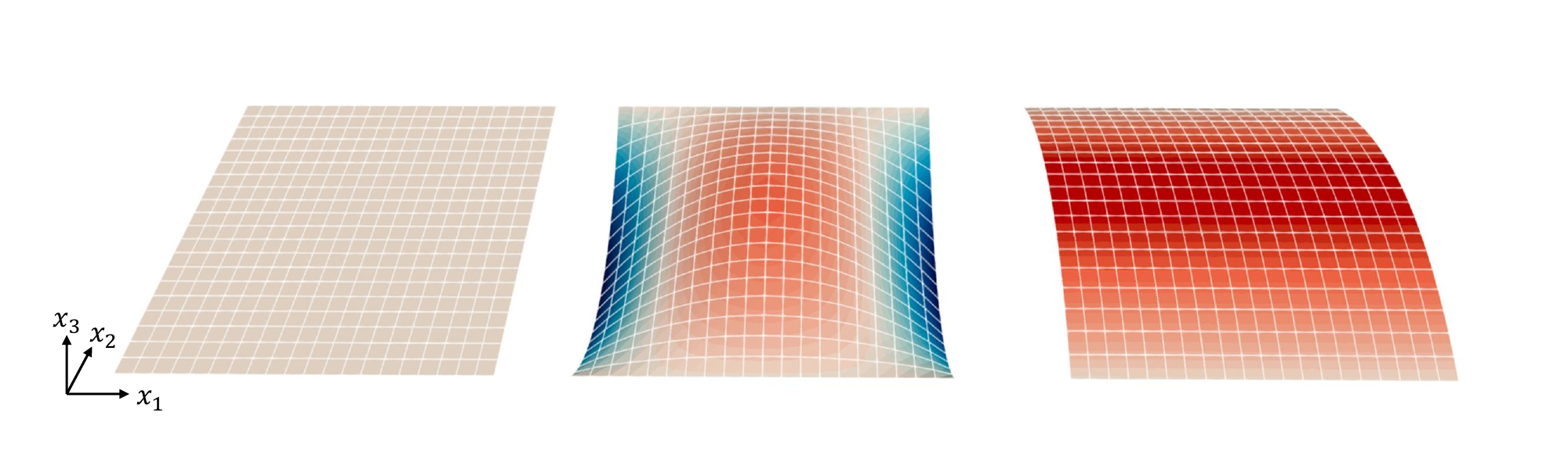}
	\caption[Example for influence of $\bbhom$]{Displacement of homogenized textile under tension in $x_2$-direction for zero (left) and a large, non-zero value of $\bhom_{2211}$ (center), as well as $\bhom_{2222}$ (right). Free lateral boundary left and right. Colors indicate deflection.}
	\label{fig:vanishing_bHom}
\end{figure}
A microscopic simulation with a similar behavior under applied tension in $x_2$-direction is shown in Figure \ref{fig:microSimulationBHom}.
\begin{figure}[H]
	\centering
	\includegraphics[width=0.8\textwidth]{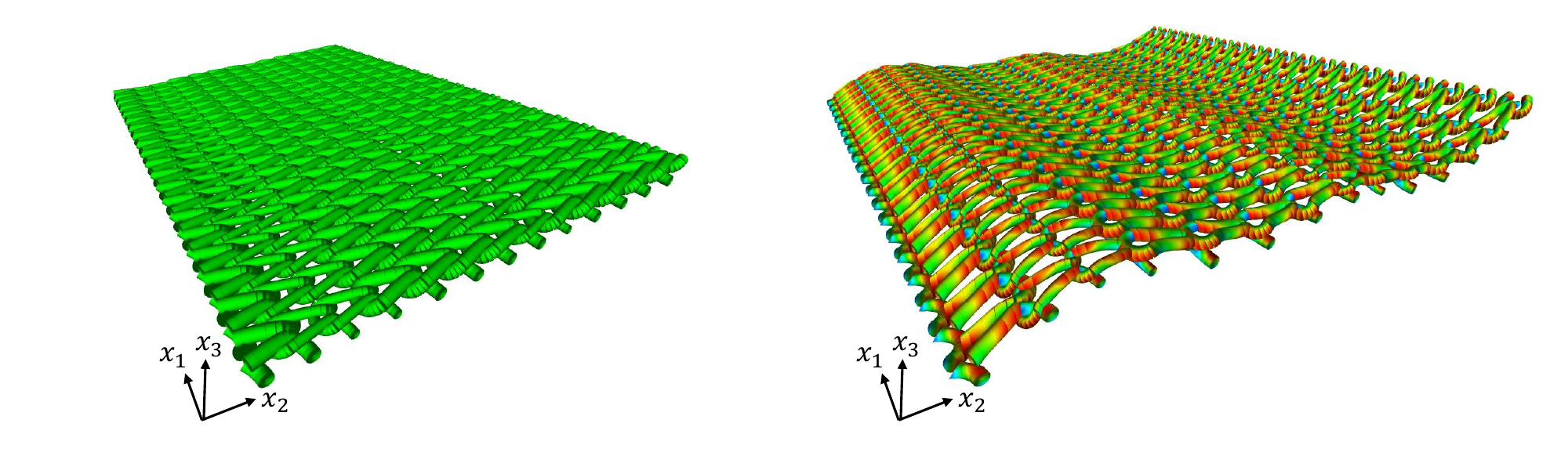}
	\caption[Example for bending under tension]{Bending along $x_2$-direction under applied tension in $x_2$-direction of a weft-knitted textile with alternating yarn material properties in $x_2$-direction. Initial textile (left) and displaced textile (right). Periodic boundary conditions at the lateral boundary. Colors indicate local stresses.}
	\label{fig:microSimulationBHom}
\end{figure}

\subsection{Quantitative description of stiffness tensors}
\label{section:quantitativeDescriptionTensors}

Using the derived qualitative descriptions from the previous section, we discuss the attained simulation results in the following examples.
We emphasize  that if not otherwise stated, all yarns in the examples are oriented along the global coordinate directions, which influences the overall structure of the homogenized stiffness tensors. \\
%
%
\begin{example}\label{example:tensors1}
	For filter sample from Fig. \ref{fig:symmetryExample}, we attain the homogenized tensors
	\begin{align*}
	\aahom &= 
	\begin{pmatrix*}[r]
	\num{4.898e+05}  & \num{2.881e+02}  & \num{-9.978e-01} \\
	\multicolumn{1}{c}{\ast}  & \num{4.898e+05}  & \num{-9.978e-01} \\
	\multicolumn{1}{c}{\ast} & \multicolumn{1}{c}{\ast} & \num{3.258e+04}
	\end{pmatrix*}\si{\newton\per\metre}, \\
	\bbhom &= 
	\begin{pmatrix*}[r]
	\num{1.105e-16} & \num{2.284e-16} & \num{1.613e-15}  \\
	\num{2.224e-17} & \num{-3.845e-16} & \num{-1.631e-15}  \\
	\num{7.391e-17} & \num{-2.481e-17} & \num{4.473e-17}
	\end{pmatrix*}\si{\newton}, \\
	\cchom &= 
	\begin{pmatrix*}[r]
	\num{3.707e-04}  & \num{-1.004e-08} & \num{-8.268e-09} \\
	\multicolumn{1}{c}{\ast} & \num{3.707e-04}  & \num{-8.268e-09} \\
	\multicolumn{1}{c}{\ast} & \multicolumn{1}{c}{\ast} & \num{1.296e-04}
	\end{pmatrix*}\si{\newton \metre}
	\end{align*}
	Qualitatively, these tensor entries are reasonable: We observe the additional symmetry
	\begin{alignat*}{2}
	&\ahom_{1111} = \ahom_{2222}, \quad &&\ahom_{1112} = \ahom_{2212}, \\
	&\chom_{1111} = \chom_{2222}, \quad &&\chom_{1112} = \chom_{2212}
	\end{alignat*}
	stemming from the rotational symmetry of the weave unit itself, as well as vanishing $\bbhom$ entries from Proposition \ref{proposition:vanishingBHom} up to machine precision.\\
%
%
%
%
%
\end{example}

\begin{example}
	With the same parametrization as in Example \ref{example:tensors1} with doubled yarn distance $\Delta_1$, we attain
	\begin{align*}
	\aahom &= 
	\begin{pmatrix*}[r]
	\num{2.448e+05}  & \num{1.407e+02}  & \num[retain-zero-exponent]{-1.161e+00} \\
	\multicolumn{1}{c}{\ast}  & \num{4.900e+05}  & \num{3.586e-01} \\
	\multicolumn{1}{c}{\ast} & \multicolumn{1}{c}{\ast} & \num{1.090e+04}
	\end{pmatrix*}\si{\newton\per\metre}, \\
	\bbhom &= 
	\begin{pmatrix*}[r]
	\num{4.512e-17} & \num{-6.467e-16} & \num{5.309e-16}  \\
	\num{8.081e-18} & \num{2.281e-16} & \num{-2.705e-16}  \\
	\num{3.511e-17} & \num{3.285e-18} & \num{7.555e-18}
	\end{pmatrix*}\si{\newton}, \\
	\cchom &= 
	\begin{pmatrix*}[r]
	\num{1.853e-04}  & \num{-4.767e-09} & \num{-1.575e-09} \\
	\multicolumn{1}{c}{\ast} & \num{3.707e-04}  & \num{-6.919e-09} \\
	\multicolumn{1}{c}{\ast} & \multicolumn{1}{c}{\ast} & \num{9.720e-05}
	\end{pmatrix*}\si{\newton \metre}.
	\end{align*}
	As to be expected, due to the halved yarn density in $x_1$-direction, the overall stiffness for tensional and bending loads in $x_1$-direction is reduced by roughly 50\%, while the stiffness in $x_2$-direction is only slightly affected.
\end{example}

\begin{example}
	Using the same parametrization as for woven filter before, but by rotating the unit cell by 45°, we attain a plain-woven braid as depicted in the right-hand side of Figure \ref{fig:poissonEffectExample}. The extensional stiffness tensor reads
	\begin{align*}
	\aahom &= 
	\begin{pmatrix*}[r]
	\num{3.785e+05}  & \num{2.609e+05}  & \num{-4.130e-11} \\
	\multicolumn{1}{c}{\ast}  & \num{3.785e+05}  & \num{-6.083e-11} \\
	\multicolumn{1}{c}{\ast} & \multicolumn{1}{c}{\ast} & \num{3.874e+05}
	\end{pmatrix*}\si{\newton\per\metre}.
	\end{align*}
	Intuitively, the values $\ahom_{iiii}$ are decreased by the rotation while the ratios $\ahom_{iijj}/\ahom_{iiii}$ became much larger in accordance to the transverse contraction depicted in Figure \ref{fig:poissonEffectExample}. Moreover, the shearing resistance is increased by the new diagonal orientation of yarns.
\end{example}
Intuitively, one expects monotonic dependence of the entries with respect to the structures mass and volume density. Thus, we anticipate a general increasing stiffness for larger yarn diameters and contrarily for smaller yarn distances.\\
%
As it can be seen in the plots of Fig. \ref{fig:dvar}, the dependence on the design is non-linear in general and a quantitative estimate in terms of design parameters proves to be involved even for this relatively simple example. 
\begin{figure}[H]
	\centering
	\includegraphics[scale=0.4]{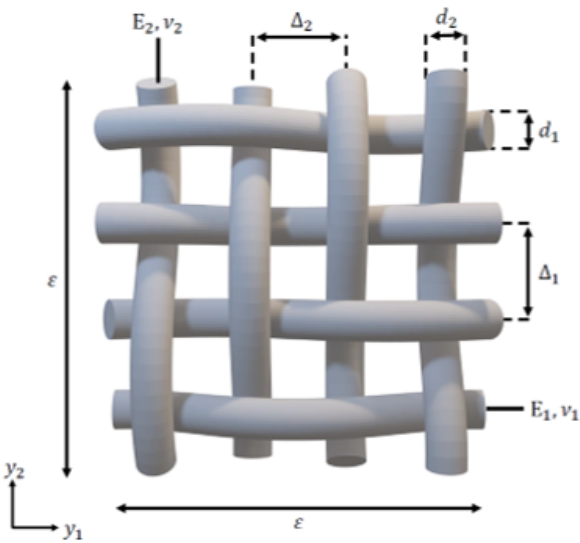}\hspace{0.1cm}
	\includegraphics[scale=0.35]{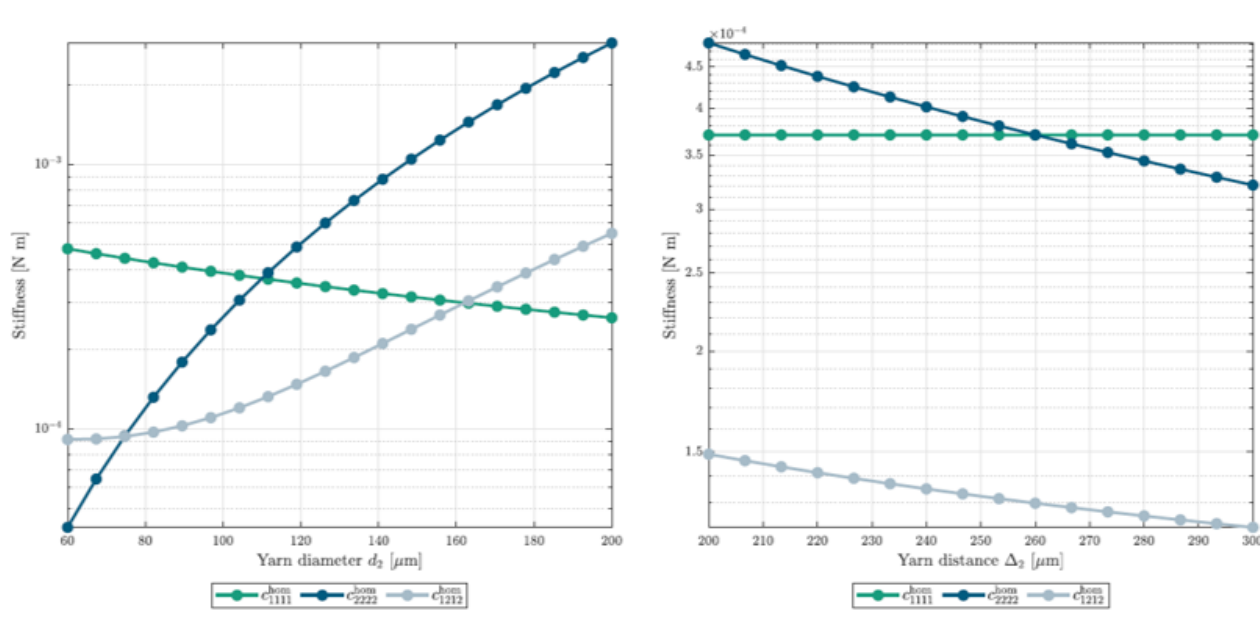}
	\caption{Sensitivity of the bending stiffness $\cchom$ to the yarn thickness and distance between yarns.}
	\label{fig:dvar}
\end{figure}

In the next example,   we consider the variation of the friction coefficient $\gamma_\text{friction}$.
Two extreme cases are displayed in Fig. \ref{fig:fric} left. The first case corresponds to a woven with loose contact, \cite{loose}), while the second case is in the framework of analysis in \cite{GOW}. For a low friction coefficient, we observe rotation at contact points, while for a high friction coefficient, a stiff contact yarns keep their original orientation and just slide in the plane at the contact points slightly. However, in order to see these effects, one should place a changing from the Dirichlet to Neumann boundary conditions on a part of the boundary (they are not such visible in the periodic problems, just in numerical values for the effective coefficients, see Fig. \ref{fig:fric} right). On the left of Fig. \ref{fig:fric}, we fixed the textile plate at a right lower corner and impose symmetry boundary conditions at the left and upper boundaries. 
\begin{figure}[H]
	\centering
	\includegraphics[scale=0.42]{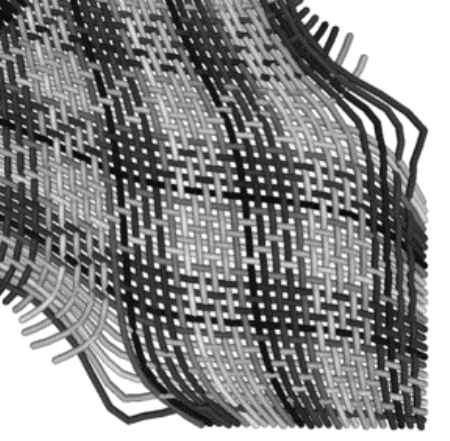}\hspace{0.1cm}
	\includegraphics[scale=0.42]{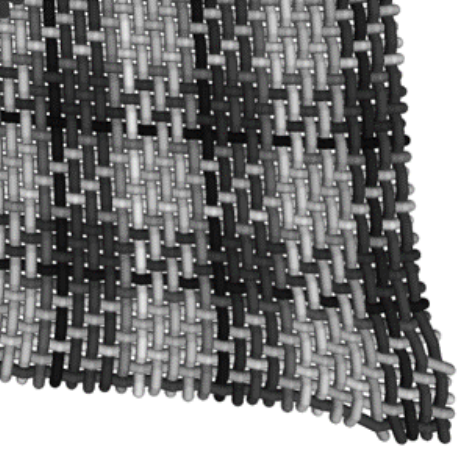}\hspace{0.1cm}
\includegraphics[scale=0.37]{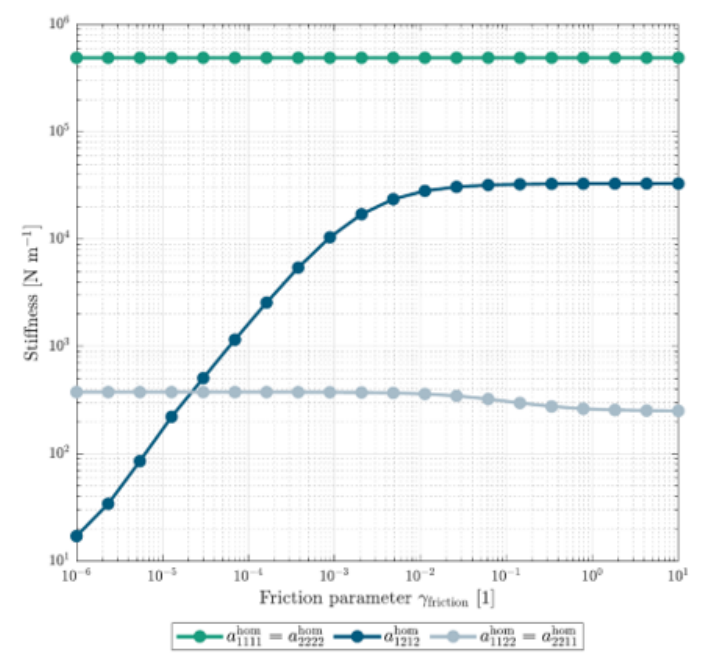}
	\caption[Sensitivity of extensional stiffness to geometric design]{Influence of the friction between yarns. Left: sliding contact $\gamma_{friction}<<1$, vs. stick contact $\gamma_{friction}=1$. The right figure demonstrates changing $\aahom$ for varying friction parameters.}
	\label{fig:fric}
\end{figure}
The effect on the shearing stiffness is visualizedin in the right Fig. \ref{fig:fric}. \\
As a next example, we construct a filter sample, for which the coupling stiffness tensor $\bbhom$ does not vanish. We accomplish this by introducing an asymmetry to a given filter, see Figure \ref{fig:unsym}. \\

For a value of $d_1=\SI{200}{\micro\metre}$, we exemplarily attain
\begin{align*}
\aahom &= 
\begin{pmatrix*}[r]
\num{5.541e+05}  & \num{8.308e+02}  & \num[retain-zero-exponent]{-1.818e+00} \\
\multicolumn{1}{c}{\ast}  & \num{4.737e+05}  & \num{2.038e-01} \\
\multicolumn{1}{c}{\ast} & \multicolumn{1}{c}{\ast} & \num{4.134e+04}
\end{pmatrix*}\si{\newton\per\metre}, \\
\bbhom &= 
\begin{pmatrix*}[r]
\num{5.388e-02} & \num{3.098e-05} & \num{-6.733e-07}  \\
\num{3.006e-05} & \num{5.388e-02} & \num{-2.273e-06}  \\
\num{-3.582e-07} & \num{7.436e-08} & \num{4.226e-03}
\end{pmatrix*}\si{\newton}, \\
\cchom &= 
\begin{pmatrix*}[r]
\num{4.686e-04}  & \num{-3.628e-09} & \num{-4.755e-09} \\
\multicolumn{1}{c}{\ast} & \num{3.878e-04}  & \num{-7.670e-09} \\
\multicolumn{1}{c}{\ast} & \multicolumn{1}{c}{\ast} & \num{1.509e-04}
\end{pmatrix*}\si{\newton \metre}.
\end{align*}
Due to the introduced asymmetry, the off-diagonal entries of $\aahom$ are slightly increased.
We analyze the influence of the choice of $d_1$ on the entries of $\bbhom$ in Figure \ref{fig:bhom_diameter}, restricted to the diagonal entries as well as the entries $\bhom_{1122},\bhom_{2211}$. 
For all entries, we observe a minimal value of \num{0} (up to machine precision) for the fully symmetric case $d_1=\SI{110}{\micro\meter}$ as expected. Even for small deviations around this value, the coupling stiffness tensor attains noticeably larger values.

\begin{figure}[H]
	\centering
	\includegraphics[scale=0.3]{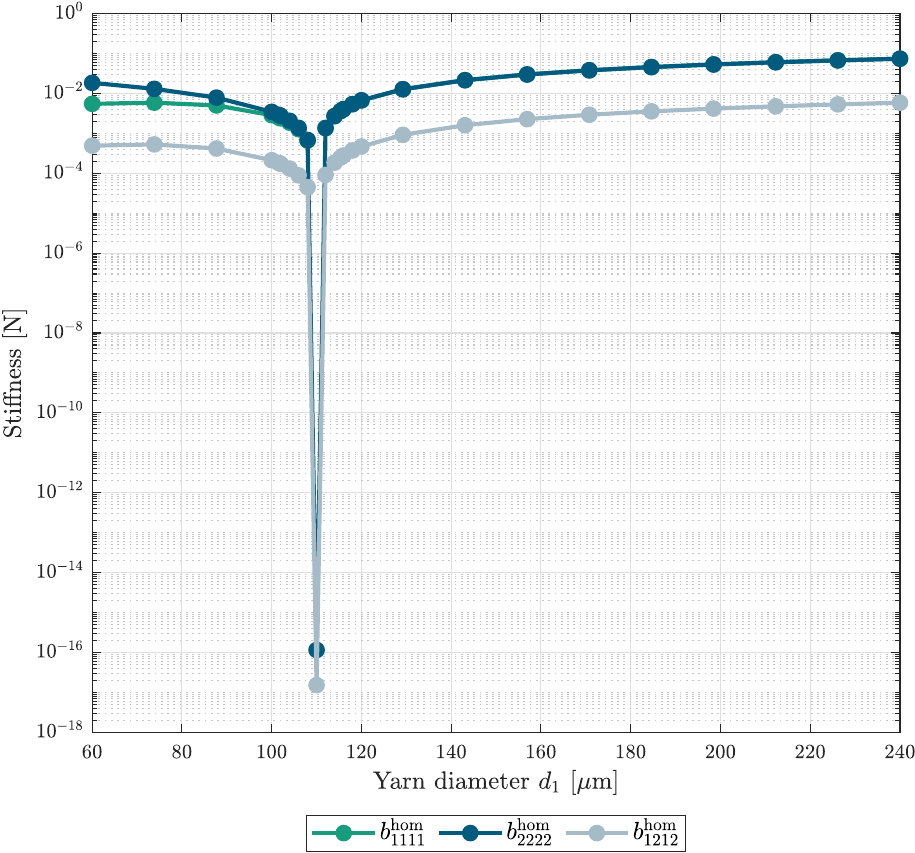}\hspace{0.02cm}
	\includegraphics[scale=0.3]{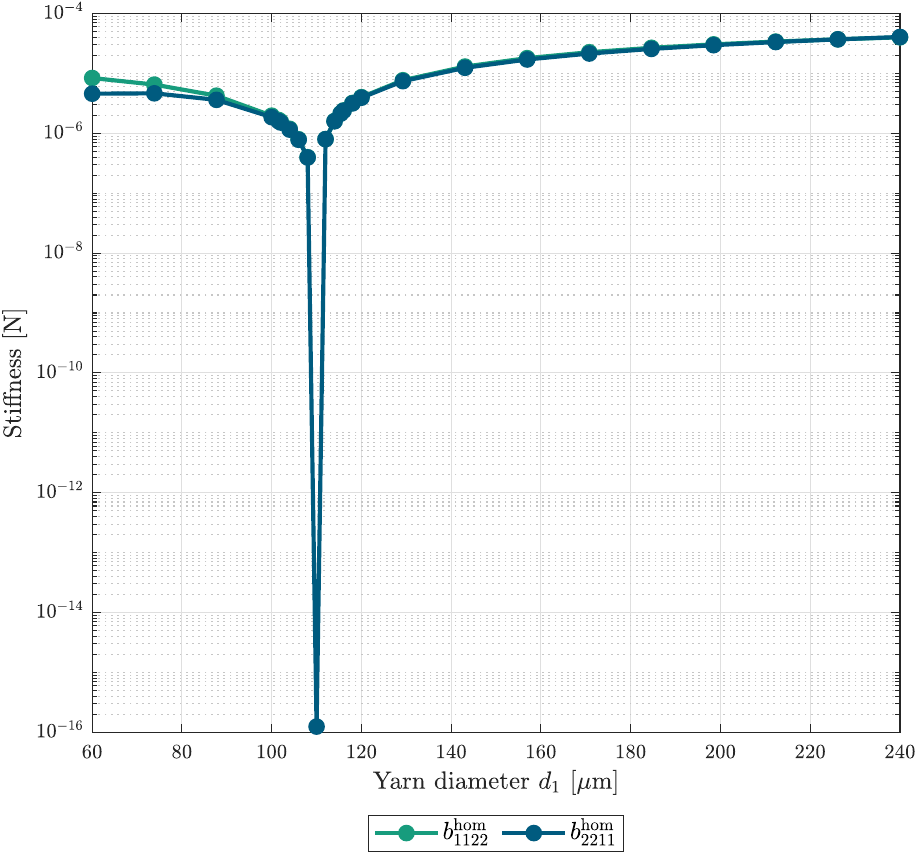}
	\caption[Sensitivity of stiffness for asymmetric filter]{Entries of $\bbhom$ for the asymmetric filter sample as functions of the yarn diameter $d_1$.}
	\label{fig:bhom_diameter}
\end{figure}
Moreover, the attained curves are non-symmetric \wrt the axis $d_1=\SI{110}{\micro\meter}$ with \eg the values $\bhom_{1111},\bhom_{2222}$ being very close to each other for larger $d_1$, while there is a clear difference for smaller $d_1$. Nevertheless, a general monotonic dependence of the entries on the deviation from the fully symmetric case can be observed at least for the investigated parameter range.\\
However, in general, such a symmetry perturbation does not influence the filer symmetry, as it can be seen in Fig. \ref{fig:unsym}.
\begin{figure}[H]
	\centering
	\includegraphics[scale=0.41]{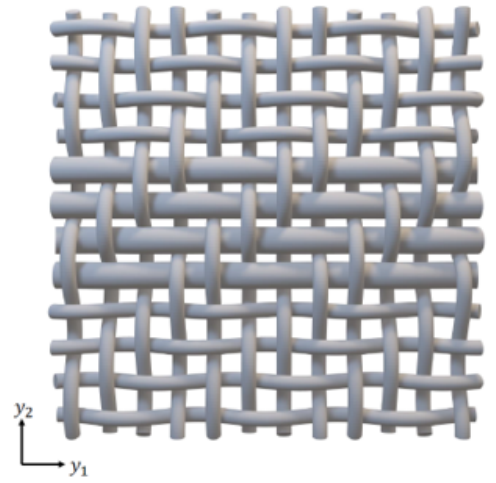}\hspace{0.01cm}
	\includegraphics[scale=0.46]{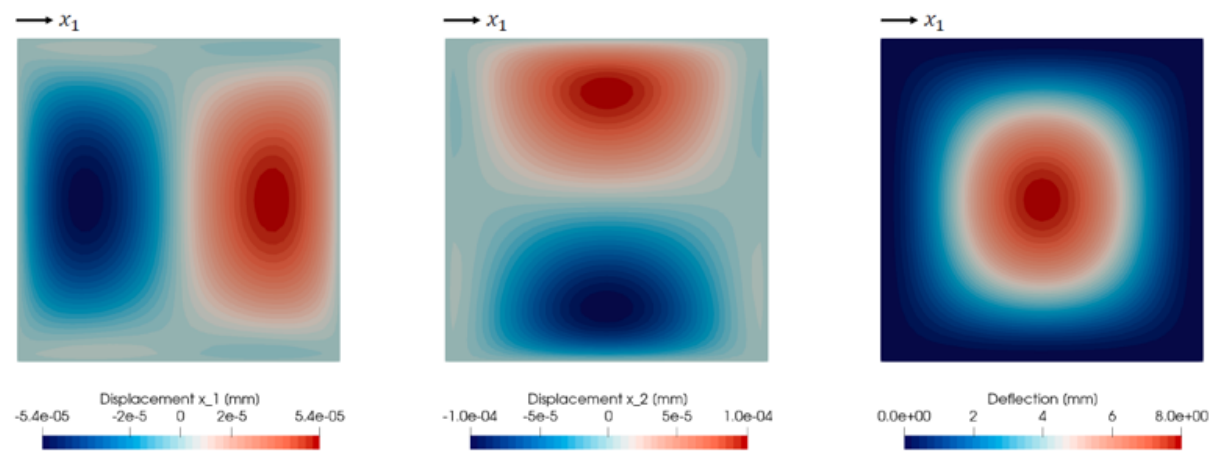}
	\caption[Sensitivity of extensional stiffness to geometric design]{Special, symmetry-perturbing textile design vs. in-plane and normal plate-displacements.}
	\label{fig:unsym}
\end{figure}
Also the fluid velocity profile does not see the local non-symmetry, see Fig. \ref{fig:unsym2}.
\begin{figure}[H]
	\centering
	\includegraphics[scale=0.6]{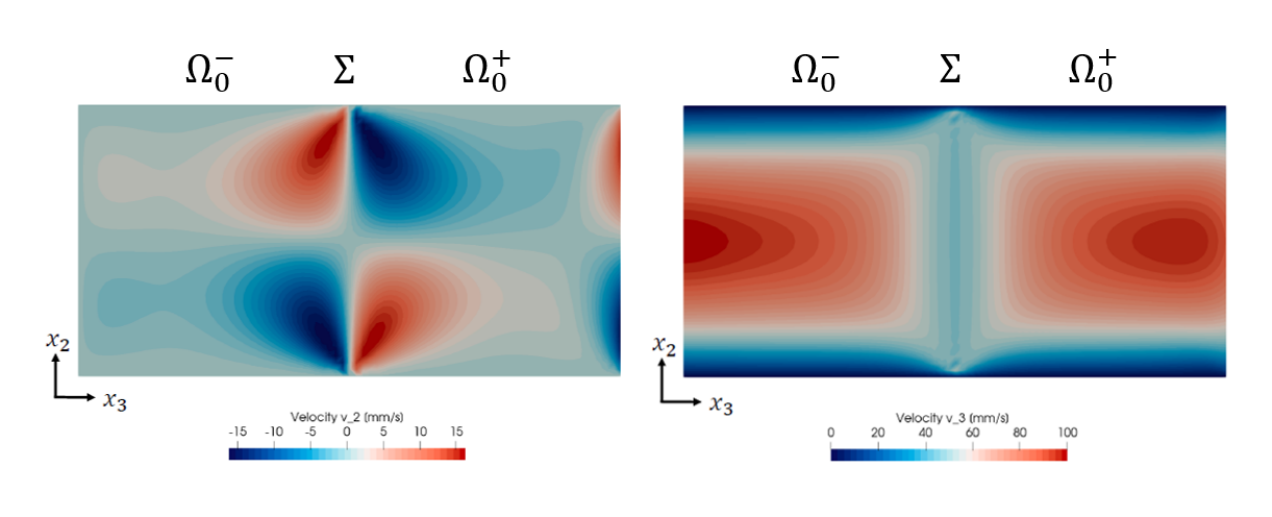}\hspace{0.1cm}
	\caption[Sensitivity of extensional stiffness to geometric design]{Tangential and normal fluid velocity components.}
	\label{fig:unsym2}
	\end{figure}

Finally, we present the filter evolution from the 2D-3D FSI-coupling, where the plate bending is relaxed by the permeability on the first image of Fig. \ref{fig:evol}, and, then on the second image the plate is bent completely. \\
\begin{figure}[H]
	\centering
	\includegraphics[scale=0.45]{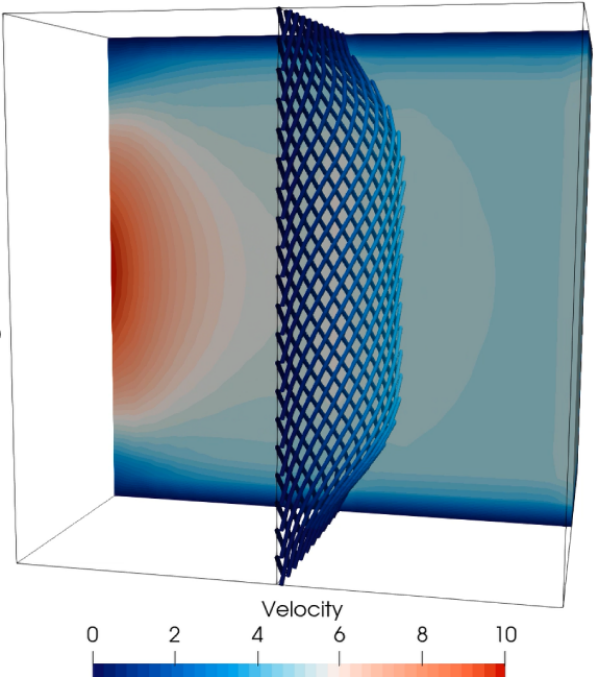}\hspace{0.02cm}
		\includegraphics[scale=0.45]{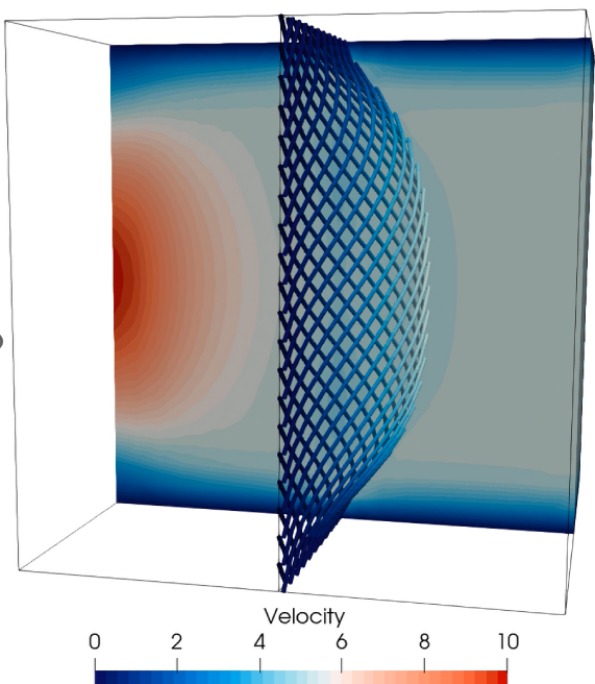}\hspace{0.02cm}
	\caption[Sensitivity of extensional stiffness to geometric design]{Normal fluid velocity evolution, coupled with the permeable flexible 2D-plate.}
	\label{fig:evol}
\end{figure}



\subsection{Extension with anisotropic model parameters}

The macroscopic FSI model remains well-posed if we switch from constant macroscopic model parameters to ones which posses $L^\infty$-regularity on $\Sigma$. 
To ensure existence of solutions, we additionally require coercivity of the permeability tensor almost everywhere on $\Sigma$, as well as coercivity of the bilinear form associated to the homogenized stiffness tensors.

Such formulations are expected to arise \eg if we loosen the periodicity assumption of the microscopic structure to domains with sufficiently \textit{regular} changing structure, see \eg  \cite[Section 5 of Chapter 3]{Panasenko:quasiPeriodic} for so called \textit{quasi-periodic structures} in homogenization of linear elasticity.

For these types of structures, we expect the same form of the cell problems and averaging of the cell solution, however, with an additional dependence on the in-plane variable $\barx$, which results in potentially infinitely many cell problems to solve. For numerical methods, we circumvent this difficulty by considering a spatial discretization of $\Sigma$, solve the cell problems for each grid-point and perform spatial interpolation afterwards. \\

Application examples in mind cover \eg multilayered structures composed of different textiles, as well as patchwork-like fabrics with alternating weaving patterns. 
For illustration, we consider flow through a 2/2 twill woven filter with alternating pattern. One can imagine the structure as a woven filter with equally spaced, parallel, densely woven stripes along the $x_2$-direction, serving as additional support structures. We choose the distances $\Delta_1=\Delta_2=\SI{2.6e2}{\micro \metre}$, as well as the diameter $d_1=\SI{1.6e2}{\micro \metre}$. The diameter $d_2$ is alternating: 24 adjacent yarns have the diameter $\SI{4e1}{\micro\metre}$, followed by 24 yarns with diameter $\SI{1.1e2}{\micro\metre}$ and again 24 yarns with diameter $\SI{4e1}{\micro\metre}$. The resulting periodic unit thereby consists of 18 twill weave units and is illustrated in Figure \ref{fig:anisotropic_structure}. It is periodically repeated 10 times in $x_1$-direction and 180 times in $x_2$-direction, such that the attained filter is quadratic with edge lengths $L_1=L_2=\SI{1.872e2}{\milli\metre}$. We set $L_3=2L_1$.
\begin{figure}[H]
	\centering	
	\includegraphics[trim={0cm 5cm 0cm 5cm}, clip, width=1\textwidth]{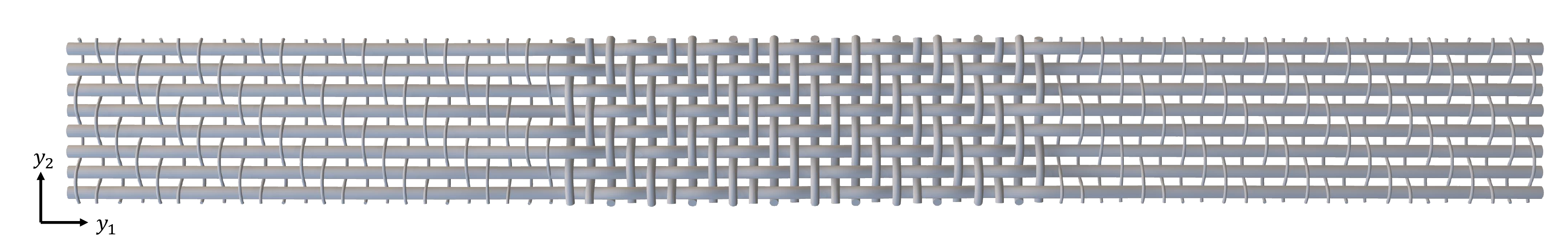}	
	\caption[Filter with alternating yarn diameter]{Periodic unit of considered filter with alternating yarn diameter $d_2$.}
	\label{fig:anisotropic_structure}
\end{figure}

Since the filter is periodic, one may perform the presented homogenization of the entire unit, analogously to the last example in Section \ref{section:quantitativeDescriptionTensors}. In the context of quasi-periodic structures, we homogenize each $4\times 4$ yarn sub-structure to attain piecewise constant homogenized stiffness tensors. The computation of a piecewise constant permeability tensor is performed analogously. \\

For the FSI simulation, we consider the stationary case with Poiseuille profile
\begin{equation*}
\bv^\text{in}(\barx) = v^\text{max}\frac{16}{L_1^2L_2^2} x_1(L_1 - x_1)x_2(L_2 - x_2) \be_3
\end{equation*}
on the inflow boundary, where $v^\text{max}$ is chosen as \SI{5e1}{\milli\metre\per\second}. To resolve the alternating model parameters, we require a significantly finer spatial resolution at the interface compared to the previous examples.

The attained fluid velocity field, as well as the pressure, are presented in Figures \ref{fig:anisotropic_v} and \ref{fig:anisotropic_subdomains}. Due to the smaller permeability in the stripes with larger yarn diameter, the flow mainly passes through the stripes with small yarn diameter. Consequently, on $\Sigma$, both velocity and pressure are oscillating along the $x_1$-direction and remain almost constant along the $x_2$-direction.

\begin{figure}[H]
	\centering
	\includegraphics[width=1\textwidth]{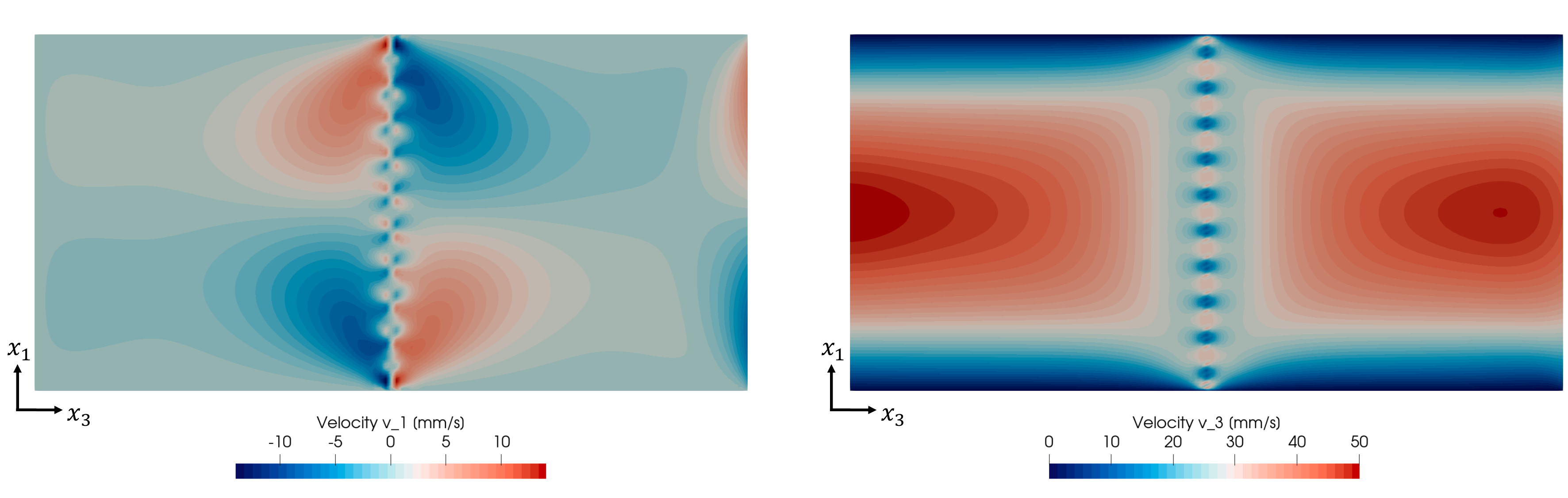}
	\caption[Velocity components for piecewise constant permeability]{Fluid velocity in cross-section of channel for piecewise constant permeability tensor.}
	\label{fig:anisotropic_v}
\end{figure}

\begin{figure}[H]
	\centering
	\includegraphics[width=0.8\textwidth]{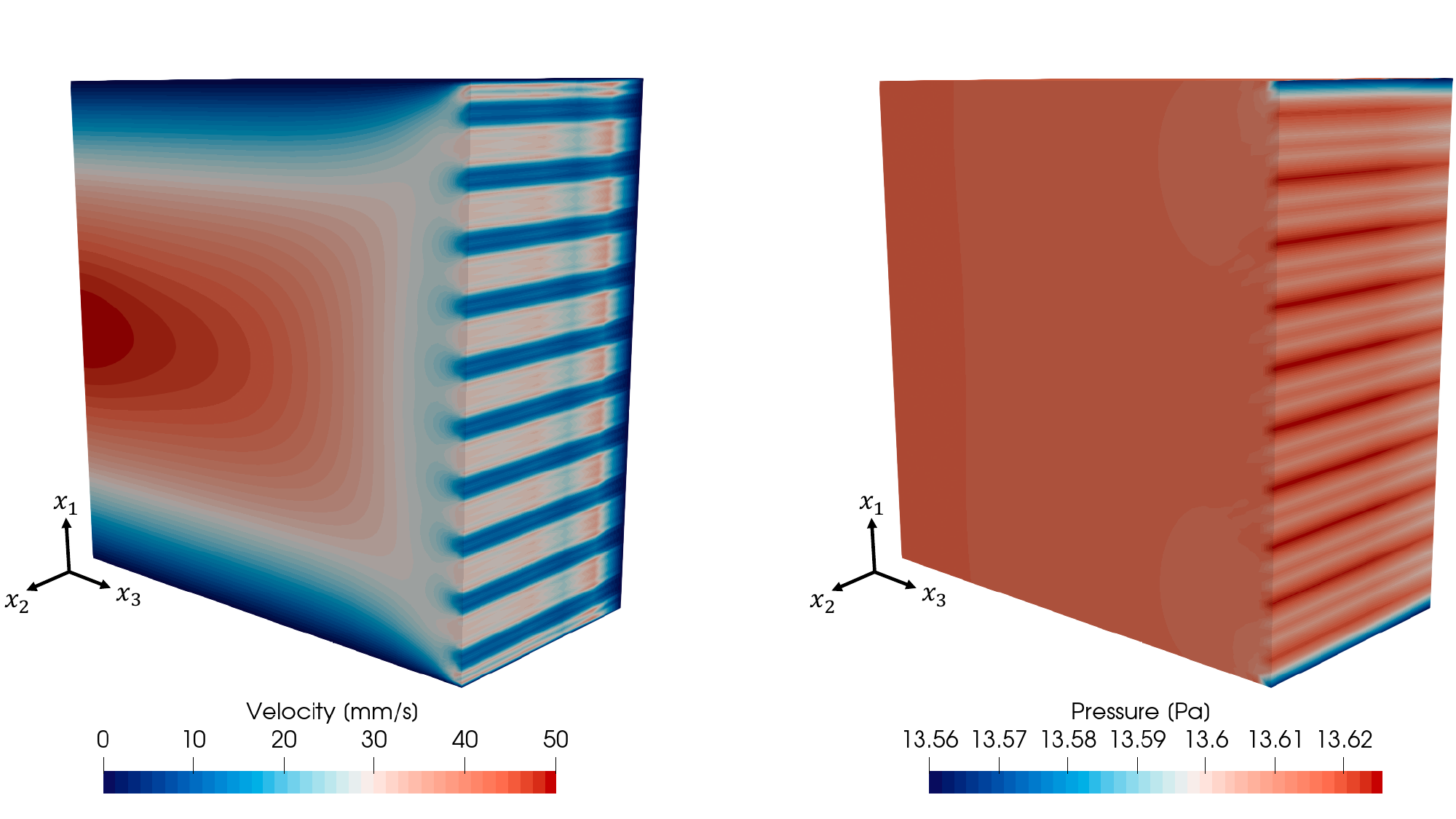}
	\caption[Velocity and pressure for piecewise constant permeability]{Fluid velocity (left) and pressure (right) in cross-section of left subdomain $\O_0^-$ for piecewise constant permeability tensor.}
	\label{fig:anisotropic_subdomains}
\end{figure}

For the structure, the flow-induced displacement profile is shown in Figure \ref{fig:anisotropic_u}. One attains a terraced profile along the $x_1$-direction, while the profile is similar to the previous examples along the $x_2$-direction. Since the jump of fluid stresses is still relatively homogeneous, as can be seen in the small oscillations of the pressure profile in Figure \ref{fig:anisotropic_subdomains}, we can deduce that the terrace effect mainly stems from the alternating bending stiffness. The deduction is confirmed by pure structure simulations with constant right-hand side functions.

\begin{figure}[H]
	\centering
	\includegraphics[width=0.5\textwidth]{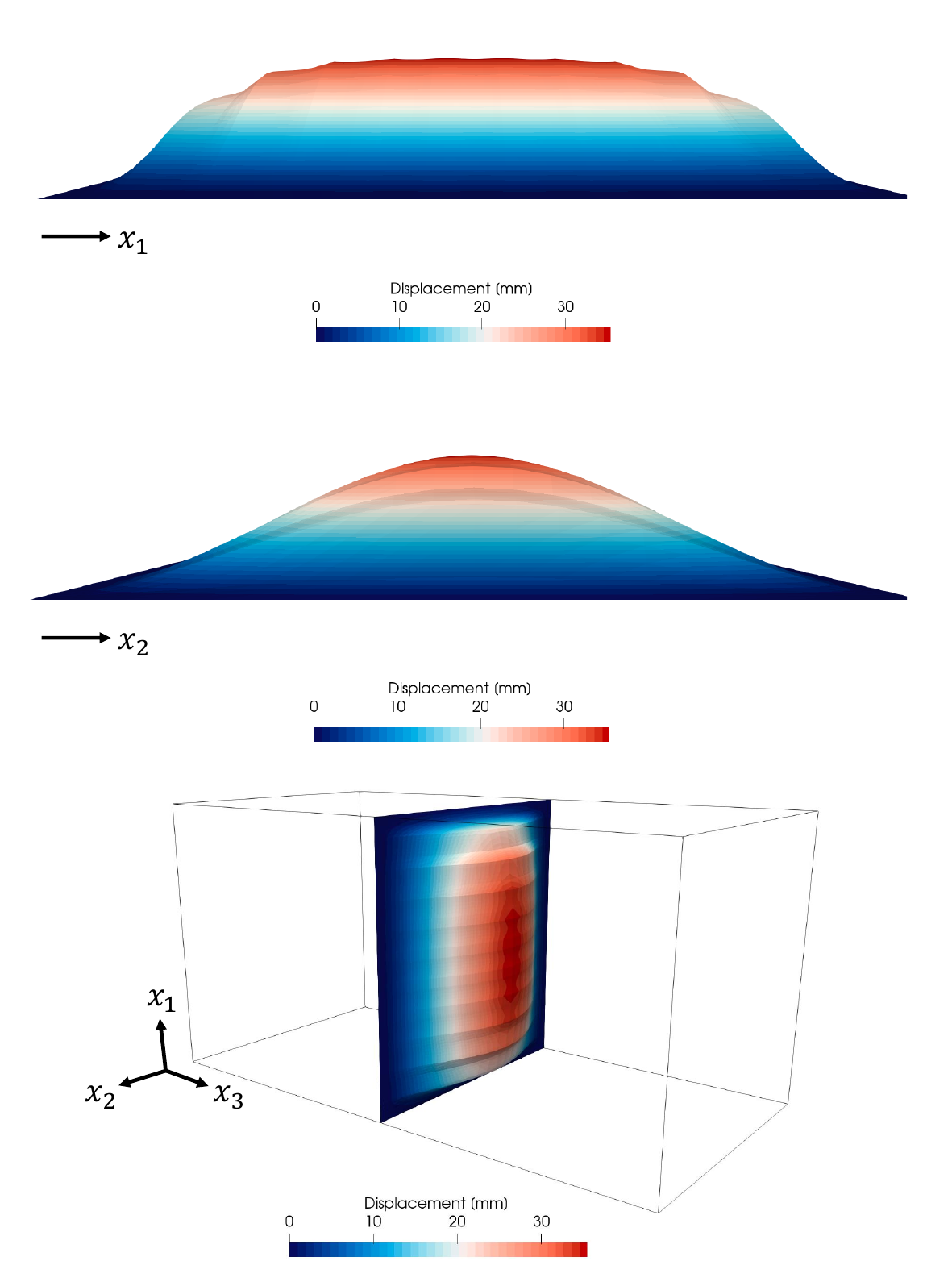}
	\caption[Displacement for piecewise constant stiffness]{Displacement profile for piecewise constant homogenized stiffness tensors.}
	\label{fig:anisotropic_u}
\end{figure}

\section{Conclusions}
\label{sec:conclusions}
This paper offers a very cheep numerical approach, coupling 2D anisotropic porous structural plate with 3D-Stokes flow in a channel by a non-standard interface condition. The interface condition set the pressure jump on the interface proportional to the interface curvature and the interface velocity has a further coupling Darcy-term, mapping the filtration through the Neumann sieve. The problem is solved by $Q_2$ interpolation or bi-spline, or bi-cubic interpolation method, which allows to pre-compute the stiffness matrices in advance and reduce problem to algebraic-differential-system of equations in time. Such a coupling is based on the own asymptotic analysis results, when the plate stiffness is in a contrast with the fluid viscosity.\\
Sec. \ref{section:qualitativeDescriptionTensors} presents the influence of the filter structure on its 2D-plate-coupling with the Stokes 3D-fluid.\\ 
The main result is shown in Fig.\ref{fig:evol}. The time-dependent evolution results into the plate bending under the pressure jump on the interface, then it relaxes by the permeability and then the plate bends completely. We continue the series of the evolution by the back flow reaching the right wall in Fig. \ref{fig:vel} 
\begin{figure}[H]
	\centering
	\includegraphics[scale=0.4]{FSI1.pdf}\hspace{0.02cm}
	\includegraphics[scale=0.4]{FSI2.pdf}
		\includegraphics[scale=0.4]{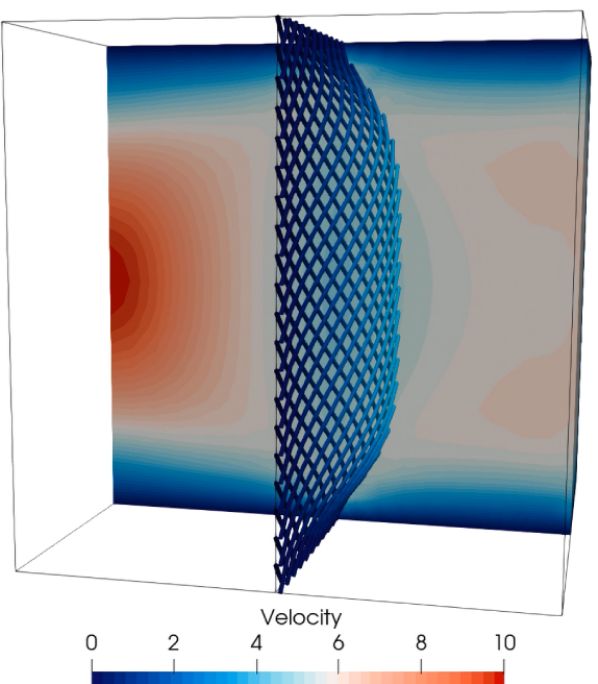}
	\caption[Sensitivity of extensional stiffness to geometric design]{Time-evolution of the normal fluid velocity.}
	\label{fig:vel}
\end{figure}
Fig. \ref{fig:kc} demonstrates two limiting cases, when the plate is impermeable, or rigid, respectively.
\begin{figure}[H]
	\centering
	\includegraphics[scale=0.6]{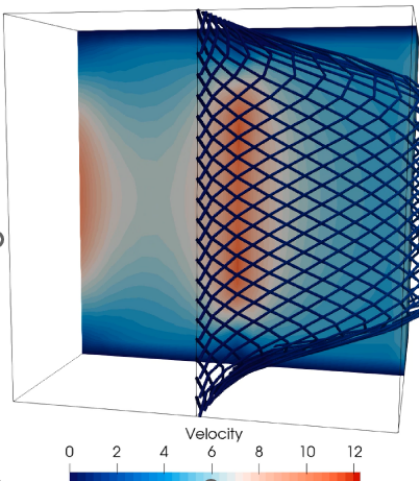}\hspace{0.02cm}
	\includegraphics[scale=0.6]{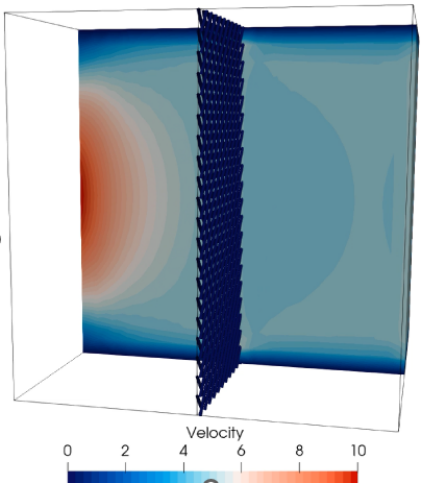}
	\caption[Sensitivity of extensional stiffness to geometric design]{Normal fluid velocity. The first image shows a non-permeable plate with $\hk\to \bm{0}$, while the second one a permeable rigid plate with $\cchom\to \infty$.}
	\label{fig:kc}
\end{figure}

%
%

\section*{Acknowledgments}
The research is funded by the German Research Foundation, DFG-Project OR 190/6-3, on dimension reduction approaches on the yarn level for the FSI with deterministic filter media.

\bibliographystyle{siamplain}
\bibliography{SIAM_Draft_references}

\begin{thebibliography}{10}

\bibitem{amirat}
{\sc Y.~Amirat, O.~Bodart, G.~A. Chechkin, and A.~L. Piatnitski}, {\em
  Asymptotics of a spectral-sieve problem}, Journal of Mathematical Analysis
  and Applications, 435 (2016), pp.~1652--1671.

\bibitem{Panasenko:quasiPeriodic}
{\sc N.~Bakhvalov and G.~Panasenko}, {\em Homogenisation: Averaging Processes
  in Periodic Media}, Kluwer Academic Publishers, Dordrecht, the Netherlands,
  1~ed., 1989.

\bibitem{Bochev:TaxonomyStokesStabilizations}
{\sc T.~Barth, P.~Bochev, M.~Gunzburger, and J.~Shadid}, {\em A taxonomy of
  consistently stabilized finite element methods for the stokes problem}, SIAM
  J. Sci. Comput., 25 (2004), pp.~1585--1607.

\bibitem{Muha:FSI3}
{\sc L.~Bociu, S.~Canic, B.~Muha, and J.~T. Webster}, {\em Multilayered
  poroelasticity interacting with stokes flow}, SIAM Journal on Mathematical
  Analysis, 53 (2021), pp.~6243--6279.

\bibitem{BFS}
{\sc F.~K. Bogner, R.~L. Fox, and L.~A. Schmit}, {\em The generation of
  interelement compatible stiffness and mass matrices by the use of
  interpolation formulae}, Proceedings of the Conference on Matrix Methods in
  Structural Mechanics,  (1965), pp.~397--444.

\bibitem{carr}
{\sc T.~Carraro, E.~Maru{\v{s}}i{\'c}-Paloka, and A.~Mikeli{\'c}}, {\em
  Effective pressure boundary condition for the filtration through porous
  medium via homogenization}, Nonlinear Analysis: Real World Applications, 44
  (2018), pp.~149--172.

\bibitem{Ciarlet:FEM}
{\sc P.~G. Ciarlet}, {\em The Finite Element Method for Elliptic Problems},
  Society for Industrial and Applied Mathematics, Philadelphia, USA, 2002.

\bibitem{Griso:Book}
{\sc D.~Cioranescu, A.~Damlamian, and G.~Griso}, {\em The periodic unfolding
  method}, Theory and Applications to Partial Differential Problems, 3 (2018),
  p.~99.

\bibitem{CDGO_2008}
{\sc G.~G. O.~D. Cioranescu~D., Damlamian~A.}, {\em The periodic unfolding
  method for perforated domains and neumann sieve models.}

\bibitem{conca:flowSieves1}
{\sc C.~Conca}, {\em Étude d’un fluide traversant une paroi perforée, i.
  comportement limite près de la paroi}, J. Math. Pures Appl., 66 (1987),
  pp.~1--43.

\bibitem{engel:CDGApproach}
{\sc G.~Engel, K.~Garikipati, T.~Hughes, M.~Larson, L.~Mazzei, and R.~Taylor},
  {\em Continuous/discontinuous finite element approximations of fourth-order
  elliptic problems in structural and continuum mechanics with applications to
  thin beams and plates, and strain gradient elasticity}, Computer Methods in
  Applied Mechanics and Engineering, 191 (2002), pp.~3669--3750.

\bibitem{riccardo:latticeStructures}
{\sc R.~Falconi, G.~Griso, and J.~Orlik}, {\em Periodic unfolding for lattice
  structures}, Ricerche di Matematica,  (2022), pp.~1--35.

\bibitem{bloodFlowPorousInterface}
{\sc M.~A. Fernández, J.-F. Gerbeau, and V.~Martin}, {\em Numerical simulation
  of blood flows through a porous interface}, ESAIM: Mathematical Modelling and
  Numerical Analysis, 42 (2008), pp.~961--990.

\bibitem{texmath}
{\sc {Fraunhofer ITWM, Department of Flow and Material Simulation}}, {\em
  Texmath software tool}, \url{itwm.fraunhofer.de/texmath_en}.
\newblock Last accessed 20 October 2023.

\bibitem{GJR}
{\sc M.~Gahn, W.~Jäger, and M.~Neuss-Radu}, {\em Derivation of
  stokes-plate-equations modeling fluid flow interaction with thin porous
  elastic layers}, Applicable Analysis, 101 (2022), pp.~4319--4348.

\bibitem{Larysa}
{\sc J.~O. Georges~Griso, Larysa~Khilkova}, {\em Asymptotic behavior of a
  viscous fluid interaction with a thin periodic plate}, submitted in 2023.

\bibitem{gomez}
{\sc D.~G{\'o}mez and M.-E. P{\'e}rez-Mart{\'\i}nez}, {\em Boundary
  homogenization with large reaction terms on a strainer-type wall},
  Zeitschrift f{\"u}r angewandte Mathematik und Physik, 73 (2022), p.~234.

\bibitem{griso:elementaryDisplacements}
{\sc G.~Griso}, {\em Decompositions of displacements of thin structures},
  Journal de Mathématiques Pures et Appliquées, 89 (2008), pp.~199--223.

\bibitem{GHO}
{\sc G.~Griso, M.~Hauck, and J.~Orlik}, {\em Asymptotic analysis for periodic
  perforated shells}, ESAIM: Mathematical Modelling and Numerical Analysis, 55
  (2021), pp.~1--36.

\bibitem{GKOS2}
{\sc G.~Griso, L.~Khilkova, and J.~Orlik}, {\em Asymptotic behavior of 3 d
  unstable structures made of beams}, Journal of Elasticity, 150 (2022),
  pp.~7--76.

\bibitem{GKOS}
{\sc G.~Griso, L.~Khilkova, J.~Orlik, and O.~Sivak}, {\em Homogenization of
  perforated elastic structures}, J Elast, 141 (2020), pp.~181--225.

\bibitem{GOW}
{\sc G.~Griso, J.~Orlik, and S.~Wackerle}, {\em Asymptotic behavior for
  textiles}, SIAM Journal on Mathematical Analysis, 52 (2020), pp.~1639--1689.

\bibitem{Wackerle:VonKarman}
{\sc G.~Griso, J.~Orlik, and S.~Wackerle}, {\em Asymptotic behavior for
  textiles in von-kármán regime}, Journal de Mathématiques Pures et
  Appliquées, 144 (2020), pp.~164--193.

\bibitem{modelDescription}
{\sc M.~Krier and J.~Orlik}, {\em Solvability of a fluid-structure interaction
  problem with semigroup theory}, AIMS Mathematics, 8 (2023), pp.~29490--29516,
  \url{https://doi.org/10.3934/math.20231510},
  \url{https://www.aimspress.com/article/doi/10.3934/math.20231510}.

\bibitem{marciniak}
{\sc A.~Marciniak-Czochra and A.~Mikeli{\'c}}, {\em A rigorous derivation of
  the equations for the clamped biot-kirchhoff-love poroelastic plate}, Archive
  for Rational Mechanics and Analysis, 215 (2015), pp.~1035--1062.

\bibitem{marusic}
{\sc E.~Maru{\v{s}}i{\'c}-Paloka and I.~Pa{\v{z}}anin}, {\em Rigorous
  justification of the effective boundary condition on a porous wall via
  homogenization}, Zeitschrift f{\"u}r angewandte Mathematik und Physik, 72
  (2021), p.~146.

\bibitem{mikelic}
{\sc A.~Mikeli{\'c} and J.~Tamba{\v{c}}a}, {\em Derivation of a poroelastic
  elliptic membrane shell model}, Applicable analysis, 98 (2019), pp.~136--161.

\bibitem{mikelic:biotDimensionReduction1}
{\sc A.~Mikelic and M.~F. Wheeler}, {\em On the interface law between a
  deformable porous medium containing a viscous fluid and an elastic body},
  Mathematical Models and Methods in Applied Sciences, 22 (2012),
  pp.~1035--1062.

\bibitem{Muha:FSI2}
{\sc B.~Muha and S.~CaniÄ‡}, {\em Existence of a solution to a
  fluid-multi-layered-structure interaction problem}, Journal of Differential
  Equations, 256 (2014), pp.~658--706.

\bibitem{loose}
{\sc J.~Orlik, R.~Falconi, G.~Griso, and S.~Wackerle}, {\em Asymptotic behavior
  for textiles with loose contact}, Mathematical Methods in the Applied
  Sciences, 46 (2023), pp.~17082--17127.

\bibitem{vladimir:textileBeams}
{\sc J.~Orlik, G.~Panasenko, and V.~Shiryaev}, {\em Optimization of
  textile-like materials via homogenization and beam approximations},
  Multiscale Modeling \& Simulation, 14 (2016), pp.~637--667.

\bibitem{OPS}
{\sc J.~Orlik, G.~Panasenko, and R.~Stavre}, {\em Asymptotic analysis of a
  viscous fluid layer separated by a thin stiff stratified elastic plate},
  Applicable Analysis, 100 (2021), pp.~589--629.

\bibitem{Panasenko:Book}
{\sc G.~P. Panasenko}, {\em Multi-scale modelling for structures and
  composites}, vol.~3, Springer, 2005.

\bibitem{sanchezPalencia:Book}
{\sc E.~Sanchez-Palencia}, {\em Non-Homogeneous Media and Vibration Theory},
  Lecture Notes in Physics, Springer Berlin Heidelberg, 1980.

\end{thebibliography}

\end{document}